\newcommand{\black}{\black}
\theoremstyle{plain}
\newtheorem{theorem}{Theorem}[section]
\newtheorem{proposition}[theorem]{Proposition}
\newtheorem{corollary}[theorem]{Corollary}
\newtheorem{lemma}[theorem]{Lemma}
\newtheorem{definition}{Definition}
\newtheorem{example}[theorem]{Example}
\newtheorem{hypothesis}[theorem]{Hypothesis}
\numberwithin{equation}{section}
\newenvironment{claim}[1]{\par\noindent\emph{Claim}:\space#1}{}
\newenvironment{claimproof}[1]{\par\noindent\emph{Proof}:\space#1}{\leavevmode\unskip\penalty9999 \hbox{}\nobreak\hfill\quad\hbox{$\blacksquare$}}
\crefname{hypothesis}{Hypothesis}{Hypothesis} 
\DeclareFontFamily{U}{matha}{\hyphenchar\font45}
\DeclareFontShape{U}{matha}{m}{n}{
      <5> <6> <7> <8> <9> <10> gen * matha
      <10.95> matha10 <12> <14.4> <17.28> <20.74> <24.88> matha12
      }{}
\DeclareSymbolFont{matha}{U}{matha}{m}{n}
\DeclareFontFamily{U}{mathx}{\hyphenchar\font45}
\DeclareFontShape{U}{mathx}{m}{n}{
      <5> <6> <7> <8> <9> <10>
      <10.95> <12> <14.4> <17.28> <20.74> <24.88>
      mathx10
      }{}
\DeclareSymbolFont{mathx}{U}{mathx}{m}{n}
\DeclareMathDelimiter{\vvvert}{0}{matha}{"7E}{mathx}{"17}
\newcommand{\bb}[1]{\mathbb{#1}}
\newcommand{\cc}[1]{\mathcal{#1}}
\newcommand{\co}[1]{\left[#1\right )} 
\newcommand{\oc}[1]{\left(#1\right ]} 
\newcommand{\ob}[1]{\left(#1\right )} 
\newcommand{\cb}[1]{\left[#1\right ]} 
\newcommand{\abs}[1]{\left\vert#1\right\vert} 
\newcommand{\norm}[1]{\|#1\|} 
\newcommand{\ceil}[1]{\lceil #1\rceil}
\newcommand{\mnorm}[1]{\vvvert #1\vvvert}
\newcommand{\indicator}{\mathbbm{1}}
\newcommand{\indicatorthat}[1]{{\mathbbm{1}}_{\left\{#1\right\}}}
\newcommand{\V}[1]{\boldsymbol{#1}}
\newcommand{\bydef}{\coloneqq}
\newcommand{\nrp}{k_{0}}
\newcommand{\R}{\bb R}
\newcommand{\Z}{\bb Z}
\newcommand{\N}{\bb N}
\renewcommand{\P}{\bb P}
\newcommand{\PP}{\mathsf{P}}
\newcommand{\adj}{\mathsf{adj}}
\newcommand{\slb}{p_{1}}
\newcommand{\flipcost}{\slb^2}
\newcommand{\tflipcost}{\mathrm{flip}_{\kappa}} 
\newcommand{\flip}{\cc F} 
\newcommand{\trans}{\cc T} 
\newcommand{\refl}{\cc R} 
\newcommand{\saw}{\Gamma}
\newcommand{\sap}{\tilde{\saw}}
\newcommand{\spa}{\mathrm{span}}
\newcommand{\bridge}{\mathsf{B}} 
\newcommand{\hsw}{\mathsf{H}} 
\newcommand{\bp}{\tau} 
\newcommand{\walk}{\mathsf{W}} 
\newcommand{\sawno}{\saw_{n}^{o}}
\newcommand{\wt}{\cc W}
\newcommand{\srwtwo}{S} 
\newcommand{\graph}{\mathsf{G}} 
\newcommand{\graphc}{\graph^{\mathsf{c}}} 
\newcommand{\lace}{\mathsf{L}} 
\newcommand{\lacemap}{\cc L} 
\newcommand{\comp}{\mathsf{C}} 
\newcommand{\vertex}{r}
\newcommand{\HHSpi}{\tilde\Pi}
\newcommand{\con}{a}
\date{Dec.\ 8, 2018}
\title{Self-attracting self-avoiding walk}
\author{Alan Hammond and Tyler Helmuth}
\address{Departments of Mathematics and Statistics\\ 
  U.C.\ Berkeley, Berkeley, CA, 94720-3840 USA}
\email{alanmh@stat.berkeley.edu}
\address{Department of Mathematics\\ 
  University of Bristol, Bristol, UK, BS8 1TW}
\email{jhelmt@gmail.com}
\begin{document}
\maketitle

\begin{abstract}
  This article is concerned with self-avoiding walks (SAW) on $\Z^{d}$
  that are subject to a self-attraction. The attraction, which rewards
  instances of adjacent parallel edges, introduces difficulties that
  are not present in ordinary SAW. Ueltschi has shown how to overcome
  these difficulties for sufficiently regular infinite-range step
  distributions and weak self-attractions~\cite{Ueltschi}. This
  article considers the case of bounded step distributions. For weak
  self-attractions we show that the connective constant exists, and,
  in $d\geq 5$, carry out a lace expansion analysis to prove the
  mean-field behaviour of the critical two-point function, hereby
  addressing a problem posed by den Hollander~\cite{dH}.

  \smallskip
  \smallskip
  \noindent \textbf{\keywordsname.} Self-interacting random walk,
  self-attracting walk, self-avoiding walk, linear polymers, lace expansion,
  critical phenomena, Hammersley-Welsh argument.
\end{abstract}

\section{Introduction}
\label{sec:introduction}

\subsection{Model definition}
\label{sec:model}

Let $\Z^{d}$ denote the $d$-dimensional integer lattice with
nearest-neighbour edges, and assume $d\geq 2$.  Let $\P$ be the law of
a random walk on the vertices of $\Z^{d}$ with i.i.d.\ increments
distributed according to a step distribution $D$. Letting
$\{\pm e_{i}\}_{i=1}^{d}$ denote the standard generators of $\Z^{d}$,
a \emph{plaquette} is a collection of vertices of the form
$\{x,x+u,x+v,x+u+v\}$ where $u\notin\{\pm v\}$ and $u$ and $v$
  are in $\{\pm e_{i}\}_{i=1}^{d}$. Two edges $\{x_{1},y_{1}\}$ and
$\{x_{2},y_{2}\}$ of $\Z^{d}$ are \emph{adjacent} if
$\{x_{1},y_{1},x_{2},y_{2}\}$ is a plaquette.

A walk is a sequence of vertices in $\Z^{d}$, and the \emph{edges} of
a walk $\omega$ are the pairs $\{\omega_{i},\omega_{i+1}\}$ of
consecutive vertices. Note that, for general increment
distributions~$D$, the edges of a walk in the support of $\P$ are not
necessarily edges of~$\Z^{d}$. Define $\adj(\omega)$ to be the
collection of pairs of edges of $\omega$ that are adjacent edges of
$\Z^{d}$, and let $\abs{\adj(\omega)}$ be the cardinality of this
set. See \Cref{fig:Walk-Example}.
  
Let $\P_{n}$ denote the law induced by $\P$ on $n$-step walks that
begin at the origin $o\in \Z^{d}$, and recall that a walk is
  \emph{self-avoiding} if it does not visit any vertex more than once
  (see \Cref{sec:conventions} for a more precise definition).  The
models we are interested in are perturbations $\PP_{n,\kappa}$ of
$\P_{n}$ defined by
\begin{equation}
  \label{eq:Model}
  \PP_{n,\kappa}(\omega) 
  \propto \indicatorthat{\omega\in \saw_{n}} \wt_{\kappa}(\omega),
  \qquad \kappa\geq 0,
\end{equation}
where $\saw_{n}$ is the set of $n$-step self-avoiding walks with
initial vertex $\omega_{0}=o$ and
\begin{equation}
  \label{eq:Hamiltonian}
  \wt_{\kappa}(\omega)  \bydef e^{-H_{\kappa}(\omega)}\P_{n}(\omega),\qquad
  e^{-H_{\kappa}(\omega)} \bydef (1+\kappa)^{\abs{\adj(\omega)}}.
\end{equation}
The symbol $\bydef$ indicates equality by definition. The law
$\PP_{n,\kappa}$ on $n$-step walks is called \emph{($n$-step)
  attracting self-avoiding walk} with \emph{attraction strength
  $\kappa$}, or \emph{($n$-step) $\kappa$-ASAW}. When the length of
the walk is irrelevant the adjective $n$-step will be dropped. We
think of the right-hand side of~\eqref{eq:Model} as defining the
$\kappa$-ASAW \emph{weight} of a walk $\omega$. The probability of a
walk is proportional to its weight.

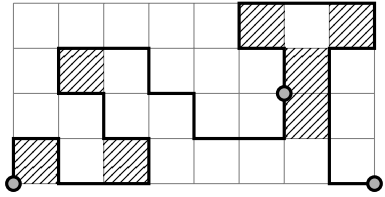
\begin{figure}[]
  \centering
  \begin{tikzpicture}[scale=.6,
    dot/.style={draw,circle,black, fill=black, minimum size=.05}]
    \draw[gray] (0,0) grid (8,4);
    \draw[black, very thick] (0,0) -- (0,1) -- (1,1) -- (1,0) -- (3,0)
    -- (3,1) -- (2,1) 
    -- (2,2) -- (1,2) -- (1,3) -- (3,3) -- (3,2) -- (4,2) -- (4,1) -- (6,1) -- (6,3) --
    (5,3) -- (5,4) -- (8,4) -- (8,3) -- (7,3) -- (7,0) -- (8,0);
    \fill[pattern= north east lines] (1,2) rectangle (2,3);
    \fill[pattern= north east lines] (0,0) rectangle (1,1);
    \fill[pattern= north east lines] (2,0) rectangle (3,1);
    \fill[pattern= north east lines] (6,1) rectangle (7,2);
    \fill[pattern= north east lines] (6,2) rectangle (7,3);
    \fill[pattern= north east lines] (5,3) rectangle (6,4);
    \fill[pattern= north east lines] (7,3) rectangle (8,4);
    \draw[black,very thick, fill=black!30] (0,0) circle [radius=.15];
    \draw[black,very thick,fill=black!30] (8,0) circle [radius=.15];
    \draw[black,very thick,fill=black!30] (6,2) circle [radius=.15];
  \end{tikzpicture}
  \caption{A self-avoiding walk $\omega$. Shaded plaquettes indicate
    the seven pairs of adjacent edges of $\omega$.}
  \label{fig:Walk-Example}
\end{figure} 

The law $\PP_{n,0}$ defined by~\eqref{eq:Model} is the law of $n$-step
\emph{self-avoiding walk} (SAW)~\cite{BCS}. Physically, self-avoiding walk
is a model of a linear polymer in a good solvent. The self-avoidance
constraint represents the inability of two molecules in the polymer to
occupy the same space. If $\kappa>0$, walks under the $\kappa$-ASAW
law are attracted to themselves. Physically, this is a model of a
linear polymer in a poor
solvent, see~\cite[Section~6.3]{Slade} and~\cite[Chapter~6]{dH}. The molecules huddle together to escape exposure to the surrounding solvent.

\subsection{Lack of submultiplicativity}
\label{sec:lack-subm}

Before stating our results, we briefly discuss the central difficulty
of the model. Let $c_{n}(\kappa)$ denote the normalization constant
that makes $\PP_{n,\kappa}$ a probability measure, i.e.,
\begin{equation}
  \label{eq:cn}
  c_{n}(\kappa) \bydef \sum_{\omega\in\saw_{n}} \wt_{\kappa}(\omega). 
\end{equation}
Note that $c_{n}(\kappa)$ is implicitly also a function of the step
distribution $D$.

The first mathematical fact one learns about self-avoiding walk is
that when $\kappa=0$ the sequence $(c_{n}(\kappa))_{n\geq 1}$ is
submultiplicative, i.e.,
\begin{equation*}
  c_{n+m}(0)\leq c_{n}(0)c_{m}(0).
\end{equation*}
This bound arises because any $(n+m)$-step self-avoiding walk can be
split into an $n$-step self-avoiding walk and an $m$-step
self-avoiding walk.  Simple estimates and Fekete's
lemma~\cite[Lemma~1.2.1]{Steele} on submultiplicative sequences imply
that $(c_{n}(0))^{1/n}$ converges as $n\to\infty$:
see~\cite[Section~1.2]{MadrasSlade}.\footnote{Note that our definition
  of $c_{n}$ involves $D$, i.e., we are enumerating weighted
  self-avoiding walks.}

The basic difficulty in the study of
$\kappa$-ASAW is that the sequence $(c_{n}(\kappa))_{n\geq
  1}$ is generally not submultiplicative for
$\kappa>0$. To see that submultiplicativity cannot hold in general,
consider the nearest-neighbour step distribution $D(x) =
(2d)^{-1}\indicatorthat{\norm{x}_{1}=1}$.  Submultiplicativity of the
sequence $c_{n}(\kappa)$ would imply
\begin{equation*}
c_{n}(\kappa)\leq c_{1}(\kappa)^{n}=1,
\end{equation*}
which cannot hold for fixed $n\geq 3$ when
$\kappa$ is sufficiently large, as the left-hand side is a polynomial
in $\kappa$ of degree at least $1$. 

\section{Results}
\label{sec:results}

Henceforth it will be assumed that $D(x)$ is invariant under the
symmetries of $\Z^{d}$ (namely, reflections in hyperplanes and
rotations by $\pi/2$ about coordinate axes), and that
$D(e_{1}) \bydef \slb>0$. Thus, $D(\pm e_{i})=\slb$ for both choices of
sign and all choices of $i=1,2,\dots, d$.

The next two subsections present our main results,
\Cref{thm:CC-Exists-Intro} and \Cref{thm:High-D}, and in
\Cref{sec:main-idea,sec:disc} we discuss the main ideas of the proofs
and briefly describe how our results fit into the literature.

\subsection{Connective constants}
\label{sec:connective-constant}

The limiting value $\mu(\kappa)$ of $(c_{n}(\kappa))^{1/n}$, if the
limit exists, is called the \emph{connective constant with
  self-attraction $\kappa$}. We will prove that the connective
constant of $\kappa$-ASAW exists for $\kappa$ sufficiently small
despite not knowing if
submultiplicativity holds.
\begin{theorem}
  \label{thm:CC-Exists-Intro}
  Let $d\geq 2$. There exists a $\kappa_{0} = \kappa_{0}(D)>0$ such that for
  $0<\kappa<\kappa_{0}$ the limit 
  $\mu(\kappa) = \lim_{n\to\infty}(c_{n}(\kappa))^{1/n}$ exists.
\end{theorem}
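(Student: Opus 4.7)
My plan is to adapt the Hammersley--Welsh bridge argument to the attracting setting, using the small-$\kappa$ hypothesis to control adjacencies across bridge boundaries.

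First, introduce the set $\bridge_{n}\subset\saw_{n}$ of $n$-step bridges in direction $e_{1}$, namely SAWs satisfying $\omega_{0}\cdot e_{1} < \omega_{i}\cdot e_{1} \le \omega_{n}\cdot e_{1}$ for all $1\le i\le n$, and write $b_{n}(\kappa) = \sum_{\omega\in\bridge_{n}} \wt_{\kappa}(\omega)$. The key estimate is the supermultiplicativity
\[
  b_{n}(\kappa)\,b_{m}(\kappa) \;\le\; b_{n+m}(\kappa).
\]
Indeed, concatenating bridges $\omega^{1}$ and $\omega^{2}$ (after translating $\omega^{2}$ so that its initial vertex lies at $\omega^{1}_{n}$) gives a bridge of length $n+m$; step probabilities factorise by independence of increments; and, crucially, the adjacency count can only grow under concatenation since cross-adjacencies between the two pieces contribute nonnegatively. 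Combined with the crude upper bound $b_{n}(\kappa)\le (1+\kappa)^{(d-1)n}$ (from $|\adj(\omega)|\le (d-1)n$ and $c_{n}(0)\le 1$), Fekete's lemma for superadditive sequences yields
\[
  \mu_{b}(\kappa) \;:=\; \lim_{n\to\infty} b_{n}(\kappa)^{1/n} \;=\; \sup_{n} b_{n}(\kappa)^{1/n}
\]
for every $\kappa\ge 0$, and the inclusion $\bridge_{n}\subset\saw_{n}$ immediately gives $\liminf_{n} c_{n}(\kappa)^{1/n}\ge\mu_{b}(\kappa)$.

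The non-trivial half is the matching upper bound $\limsup_{n} c_{n}(\kappa)^{1/n}\le\mu_{b}(\kappa)$. For this I would apply the Hammersley--Welsh decomposition, writing each SAW canonically as a concatenation of bridges whose $e_{1}$-extents are strictly decreasing after the first; the number of such decompositions is subexponential (bounded by the partitions-into-distinct-parts count $e^{O(\sqrt{n})}$). Summing over decompositions and using the supermultiplicativity $\prod_{i} b_{n_{i}}(\kappa)\le b_{n}(\kappa)$ reduces the problem to bounding the cross-adjacency correction arising from
\[
  \wt_{\kappa}(\omega^{1}\cdots\omega^{k}) \;=\; (1+\kappa)^{\mathrm{cross}(\omega^{1},\ldots,\omega^{k})}\,\prod_{i}\wt_{\kappa}(\omega^{i}),
\]
where $\mathrm{cross}$ counts adjacencies between edges of distinct bridges in the decomposition.

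The principal obstacle is precisely this cross-adjacency factor: a worst-case bound gives $\mathrm{cross}=O(n)$, producing an unwanted exponential factor $(1+\kappa)^{O(n)}$ that would spoil the argument. To beat the naive bound I would exploit the layered structure of the HW decomposition (successive bridges occupy nearly disjoint altitude slabs in the $e_{1}$-direction, so cross-adjacencies localise to narrow strips near bridge junctions) together with the smallness of $\kappa$, which allows a perturbative treatment of the residual junction contributions. This is where the hypothesis $\kappa<\kappa_{0}(D)$ becomes essential. Once the bound $c_{n}(\kappa)\le b_{n}(\kappa)\,e^{o(n)}$ is established, the existence of $\mu(\kappa)=\mu_{b}(\kappa)$ follows by combining with the lower bound obtained above.
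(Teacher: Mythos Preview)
Your overall architecture is correct and matches the paper's: bridge supermultiplicativity gives the lower bound, and a Hammersley--Welsh unfolding is used for the upper bound. However, your proposed handling of the cross-adjacency term contains a genuine gap.

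The claim that ``successive bridges occupy nearly disjoint altitude slabs in the $e_{1}$-direction, so cross-adjacencies localise to narrow strips near bridge junctions'' is false. In the Hammersley--Welsh decomposition of a half-space walk $\omega$, the initial bridge $\omega^{b}$ occupies the full slab $\pi_{1}^{-1}([0,A_{1}])$, and the remainder $\omega^{h}$ lives in $\pi_{1}^{-1}([1,A_{1}])$ --- the \emph{same} slab, not a nearly disjoint one. Nothing prevents $\omega^{h}$ from running parallel to $\omega^{b}$ over its entire length, producing $\Theta(n)$ cross-adjacencies that are in no sense localised near the junction point $\omega_{\tau(\omega)}$. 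Consequently the factor $(1+\kappa)^{\mathrm{cross}}$ is genuinely exponential in $n$ in the worst case, and no amount of smallness of $\kappa$ by itself rescues the argument: you would need $(1+\kappa)^{Cn}$ to be subexponential, which forces $\kappa=0$.

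The paper overcomes this with a different mechanism: a \emph{multivalued} unfolding map. Each cross-adjacency between $\omega^{b}$ and $\omega^{h}$ spans a plaquette $P$, and (away from the junction) $P$ is ``flippable'' for $\omega^{b}$, meaning the single edge of $\omega^{b}$ in $P$ can be replaced by the three complementary edges of $P$ to produce a new, longer bridge. If there are $k$ such plaquettes, one chooses a disjoint subcollection of size $\Theta(k)$ and flips an arbitrary subset of size $\lceil\delta\alpha k\rceil$, generating $\binom{\Theta(k)}{\Theta(\delta k)}$ distinct bridge images from a single $\omega$. This entropic gain is then played off against the energetic loss $(1+\kappa)^{k}$ in a weighted multivalued-map inequality; for $\kappa$ small (depending on $p_{1}=D(e_{1})$, which enters through the cost of each flip) the entropy wins and one obtains $h_{n}(\kappa)\le e^{O(\sqrt{n})}\mu_{\bridge}(\kappa)^{n}$. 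The localisation you hoped for is replaced by this energy--entropy trade-off, and that trade-off is where $\kappa_{0}(D)$ actually comes from.
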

Note that the dependence of $\kappa_{0}$ on $D$ implicitly means that
$\kappa_{0}$ may depend on the dimension $d$. The remainder of this
section briefly describes the proof of \Cref{thm:CC-Exists-Intro},
although we delay a discussion of how the lack of submultiplicativity
is overcome to \Cref{sec:main-idea}. The proof appears in
\Cref{sec:CC}.

An $n$-step self-avoiding walk $\omega$ is a \emph{bridge} if
$\pi_{1}(\omega_{0})<\pi_{1}(\omega_{j})\leq\pi_{1}(\omega_{n})$ for
all $j=1,\dots,n$, where $\pi_{1}$ denotes projection onto the first
coordinate. A key observation for the proof of
\Cref{thm:CC-Exists-Intro} is that $\wt_{\kappa}$ is
supermultiplicative on bridges when $\kappa\geq 0$. This implies the
connective constant for bridges, $\mu_{\bridge}(\kappa)$, exists.

A classical argument due to Hammersley and Welsh shows that the number
of $n$-step self-avoiding bridges is the same, up to sub-exponential
corrections, as the number of $n$-step self-avoiding walks~\cite{HW};
see also~\cite[Section~3.1]{MadrasSlade}. An immediate consequence is
that $\mu_{\bridge}(0)=\mu(0)$.  To prove the existence of
$\mu(\kappa)$, we adapt the Hammersley-Welsh argument to $\kappa>0$;
i.e., we prove that the difference in the $\kappa$-ASAW weight of
$n$-step bridges and $n$-step walks is sub-exponential in $n$.

The Hammersley-Welsh argument involves ``unfolding'' self-avoiding
walks by reflecting segments of the walk through well-chosen
hyperplanes. It is during unfolding that the lack of
submultiplicativity must be overcome.

\subsection{Mean-field behaviour}
\label{sec:criticial-exponents}

To formulate \Cref{thm:High-D}, our main lace expansion result, we
require further assumptions on the step distribution $D$.
\begin{definition}
  \label{def:SD}
  Let $L>0$. A step distribution $D$ is \emph{spread-out with
      parameter~$L$} if it has the form
  \begin{equation}
    \label{eq:Steps}
    D(x) =
    \begin{cases}
      \frac{h(x/L)}{\sum_{x\in\Z^{d}\setminus \{o\}}h(x/L)} & x\neq o \\
      0 & x=o,
    \end{cases}
  \end{equation}
  where $h\colon \cb{-1,1}^{d}\to \co{0,\infty}$ is a piecewise continuous
  function such that $h(0)>0$, $0$ is a point of continuity, and
  \begin{enumerate}
  \item $h$ is invariant under the symmetries of $\Z^{d}$, and
  \item $\int h(x)\,dx =1$.
\end{enumerate}
\end{definition}

In what follows when we consider a ``spread-out step distribution'' we
mean the one-parameter family of step distributions obtained by
choosing a single function $h$. Note that $h(0)>0$ and $0$ being a
point of continuity for $h$ implies that the denominator in
\eqref{eq:Steps} is positive and $D(e_{1})=\slb>0$ if $L$ is taken
sufficiently large.  We will implicitly assume $L$ is at least this
large in what follows. The variance of $D$ will be denoted by
$\sigma^{2}\bydef\sum_{x\in\Z^{d}}\norm{x}_{2}^{2}D(x)$.

\begin{example}
  Consider $h(x)=2^{-d}$ on $\cb{-1,1}^{d}$. This leads to $D(x)$
  being uniformly distributed on vertices $x\neq o$ with
  $\norm{x}_{\infty}\leq L$.
\end{example}

The proof of \Cref{thm:CC-Exists-Intro} relies in part on establishing
that $\kappa$-ASAW is repulsive in an averaged sense; roughly
speaking, this means that a walk under the $\kappa$-ASAW law is not
typically attracted to its earlier trajectory. This idea, which is explained
more precisely in \Cref{sec:main-idea}, also turns out to enable a
lace expansion analysis of $\kappa$-ASAW at \emph{criticality}, a
notion which we introduce in the next two definitions.

\begin{definition}
  The \emph{susceptibility} of $\kappa$-ASAW is the power series
  \begin{equation}
    \label{eq:Susceptibility}
    \chi_{\kappa}(z) \bydef \sum_{n=0}^{\infty}c_{n}(\kappa)z^{n}.
  \end{equation}
\end{definition}

\begin{definition}
  \label{def:crit}
  The \emph{critical point} $z_{c}= z_{c}(D,\kappa)$ of $\kappa$-ASAW
  is defined to be
  \begin{equation*}
    z_{c} \bydef \sup\{z\geq 0 \mid \chi_{\kappa}(z)<\infty\}.
\end{equation*}
\end{definition}

For SAW, submultiplicativity implies that $z_{c}(0) =
\mu(0)^{-1}$. For $\kappa$ small enough that
\Cref{thm:CC-Exists-Intro} applies, it remains true that
$z_{c}(\kappa)=\mu(\kappa)^{-1}$ by the Cauchy-Hadamard
characterization of the radius of convergence.

Before stating our main result on the behaviour of $\kappa$-ASAW at
the critical point $z_{c}(\kappa)$, we require a few more
definitions. Precise formulations of the classes of walks involved in
these definitions can be found in \Cref{sec:conventions}.

The \emph{two-point function} of $\kappa$-ASAW is defined, for $z\geq
0$ and $x\in\Z^{d}$, by
\begin{equation}
  \label{eq:ASAW2PT}
  G_{z,\kappa}(x) \bydef 
  \sum_{n\geq 0}\sum_{\omega\in\saw_{n}(x)}z^{n}\wt_{\kappa}(\omega).
\end{equation}
Note that the inner sum is restricted to self-avoiding walks that end
at $x$; only $n$-step walks with positive probability under
$\PP_{n,\kappa}$ contribute.

The $\kappa$-ASAW two-point function should be compared with
the \emph{simple random walk} two-point function
\begin{equation}
  \label{eq:SRW2PT}
  \srwtwo_{z}(x) 
  \bydef
  \sum_{n\geq 0}\sum_{\omega\in\walk_{n}(x)}z^{n}\P_{n}\cb{\omega},
\end{equation}
in which the inner sum is over $\walk_{n}(x)$, the set of all $n$-step
walks with initial vertex $o\in\Z^{d}$ and terminal vertex
  $x\in \Z^{d}$. The term ``simple'' is used to
indicate that the associated law on $n$-step walks is $\P_{n}$,
although this may not be a nearest-neighbour walk.  Let $\mnorm{x}$
denote $\max\{\norm{x}_{2},1\}$, where $\norm{\cdot}_{2}$ is
  the Euclidean norm. Despite the notation, $\mnorm{\cdot}$ is not a
norm.
\begin{theorem}
  \label{thm:High-D}
  Let $d\geq 5$. For sufficiently spread-out step distributions, with
  parameter $L\geq L_{0}(D)$, there is a $\kappa_{0}>0$ such that if
  $0\leq \kappa\leq \kappa_{0}$ and $\alpha>0$ then
  \begin{equation}
    \label{eq:High-D-G}
    G_{z_{c},\kappa}(x) = \frac{a_{d}}{\sigma^{2}\mnorm{x}^{d-2}}
    \ob{1+O(L^{\alpha-2}) + O\ob{\frac{L^{2}}{\mnorm{x}^{2-\alpha}}}},
  \end{equation}
  where $\sigma^{2}$ is the variance of the step distribution $D$, the
  constants implicit in the $O(\cdot)$ notation may depend on $\kappa$
  and $\alpha$, and $a_{d} = 2^{-1} \pi^{-d/2} d
  \Gamma(\frac{d}{2}-1)$. Here $\Gamma(\frac{d}{2}-1)$ is the
  evaluation of Euler's Gamma function at $\frac{d}{2}-1$.
\end{theorem}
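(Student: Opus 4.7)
The plan is to derive a lace expansion adapted to the self-attraction, run a Hara--Slade bootstrap to obtain an infrared bound and diagrammatic estimates on the lace coefficient up to $z_c(\kappa)$, and then extract the pointwise Gaussian asymptotics by the $x$-space method of Hara. Throughout, the hypotheses $d\geq 5$, $L\geq L_0$, and $\kappa\leq \kappa_0$ enter solely to make certain explicit small parameters small.

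The first step is to set up the expansion. Using $e^{-H_{\kappa}(\omega)} = \prod_{\{e,e'\}\in\adj(\omega)}(1+\kappa)$ and expanding the product as $(1+\kappa)^{\abs{\adj(\omega)}} = \sum_{A\subseteq \adj(\omega)}\kappa^{\abs{A}}$, one represents the $\kappa$-ASAW weight of $\omega$ as a sum over walks decorated by a distinguished set $A$ of adjacent edge-pairs. One then enlarges the usual interaction graph (whose edges record pairs of walk-steps sharing a vertex) by adding, for each decoration in $A$, an additional ``adjacency edge'' between the two corresponding walk-steps, and applies the Brydges--Spencer laces/compatible-edges construction on this enlarged graph. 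The outcome is a convolution identity of the form
\begin{equation*}
  G_{z,\kappa}(x) \;=\; \delta_{x,o} + \ob{(zD + \HHSpi_{z,\kappa})*G_{z,\kappa}}(x),
\end{equation*}
where $\HHSpi_{z,\kappa}$ is the $\kappa$-ASAW lace expansion coefficient, given by an alternating sum of weighted diagrams; each diagram is a concatenation of self-avoiding subwalks joined at the vertices and adjacent-edge pairs that form a connected lace.

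The second step is to bound $\HHSpi_{z,\kappa}$ diagrammatically and close a bootstrap. Each lace contribution admits an estimate in terms of convolutions of $G_{z,\kappa}$ and $D$: a vertex-intersection connection contributes a factor controlled by $G_{z,\kappa}^{2}$, while each adjacency connection contributes an additional factor of order $\kappa\,\slb^{2}$ multiplying a similar diagrammatic object (two extra $D$-steps are needed to close a plaquette). Combined with the averaged-repulsivity estimates already established in the course of proving \Cref{thm:CC-Exists-Intro}, these bounds reduce $\HHSpi_{z,\kappa}$ to a sum controlled by $\kappa$, $L^{-(d-4)/2}$, and the bubble $\sum_{x}G_{z,\kappa}(x)^{2}$. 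A standard Hara--Slade bootstrap on three scalar functions of $z$ (such as the susceptibility, $\sup_{k}(1-\hat D(k))|\hat G_{z,\kappa}(k)|$, and a weighted-moment quantity controlling $\nabla^{2}\hat G_{z,\kappa}$) then self-consistently closes on $[0,z_{c}(\kappa))$, delivering the infrared bound $\hat G_{z_{c},\kappa}(k) \leq C/(1-\hat D(k))$ together with absolute summability and second-moment bounds on $\HHSpi_{z_{c},\kappa}$.

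The third step is to promote this Fourier-side control into the $x$-space statement \eqref{eq:High-D-G}. Writing $G_{z_{c},\kappa}$ as a Neumann series built from $z_{c}D + \HHSpi_{z_{c},\kappa}$, the deconvolution argument of Hara (as refined for spread-out models in Hara--van der Hofstad--Slade) compares $G_{z_{c},\kappa}$ with the Green's function of a simple random walk whose step kernel has variance $\sigma^{2}(1+O(\kappa) + O(L^{\alpha-2}))$. Invoking the classical lattice Green's function asymptotics $a_{d}/(\sigma^{2}\mnorm{x}^{d-2})$ and tracking the perturbation through the inversion yields the prefactor $a_{d}/\sigma^{2}$; the remainder terms generated by the $\HHSpi_{z_{c},\kappa}$-corrections become the claimed $O(L^{\alpha-2}) + O(L^{2}/\mnorm{x}^{2-\alpha})$ error. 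The main obstacle is the first step: the weight $(1+\kappa)^{\abs{\adj(\omega)}}$ is genuinely non-local along $\omega$, so the one-step factorization underlying the classical lace expansion is unavailable. The crucial technical work is to design the enlarged interaction graph so that the Brydges--Spencer alternating-sign structure survives, and so that every adjacency edge in a diagram comes accompanied by the small factor $\kappa\,\slb^{2}$ rather than threatening to amplify the diagrams; once this architecture is in place, the bootstrap and the Hara $x$-space inversion follow the established high-dimensional playbook.
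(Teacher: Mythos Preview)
Your overall architecture---lace expansion, diagrammatic bounds leveraging averaged repulsion, bootstrap, then $x$-space asymptotics via Hara--van der Hofstad--Slade---matches the paper's. But you have misplaced where the difficulty lies, and your description of two key steps differs from what the paper actually does.

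First, the derivation of the expansion is \emph{not} the obstacle. The paper observes (Lemma~\ref{lem:A-Rep}) that the $\kappa$-ASAW weight factors exactly as
\[
  \wt_{\kappa}(\omega)\indicatorthat{\omega\in\saw}
  = \P_{n}(\omega)\prod_{0\leq i<j\leq n,\ j>i+1}(1-U_{ij}(\omega)),
  \qquad
  U_{ij}=\indicatorthat{\omega_i=\omega_j}-\kappa\indicatorthat{\{\omega_i,\omega_{i+1},\omega_{j-1},\omega_j\}\in\plaq},
\]
so the Brydges--Spencer algebra goes through verbatim; no ``enlarged interaction graph'' or separate expansion of $(1+\kappa)^{\abs{\adj}}$ is needed. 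The convolution equation is $G=\delta+zD\ast G+\Pi\ast G$, and $\abs{U_{ij}}\leq \vertex_{\kappa}(\omega_j-\omega_i)$ with $\vertex_{\kappa}(x)=\indicatorthat{x=o}+\kappa\indicatorthat{\norm{x}_\infty=1}$. Your ``$\kappa\slb^{2}$ with two extra $D$-steps'' mechanism is not what is used; adjacency lace edges simply contribute a factor $\kappa$ through $\vertex_{\kappa}$, and one then needs $\kappa\leq\beta$ in the bootstrap.

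Second, the genuine obstacle is the diagrammatic step: after a lace decomposes $\omega$ into subwalks $\omega^{(i)}$, each $\omega^{(i)}$ carries a \emph{memory} $\eta^{(i)}$ encoding both avoidance and attraction with earlier subwalks, and one must show
\[
  G^{\eta}_{z,\kappa}(x)\leq (1+\kappa)^{\nrp}\,G_{z,\kappa}(x)
\]
uniformly in $\eta$. This is Proposition~\ref{prop:ASM}, proved by a multivalued flip map on two-point functions, and it is \emph{not} what is established in the course of Theorem~\ref{thm:CC-Exists-Intro}; that proof concerns bridges and half-space walks via an unfolding argument, not two-point functions with memory. The idea is shared, but the specific statement needed for the diagrams must be proven separately.

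Finally, you propose a Fourier-side three-function bootstrap, whereas the paper runs the $x$-space bootstrap of~\cite{HvdHS} directly on $g(z)=\sup_{x\neq o}L^{2-\alpha}\mnorm{x}^{d-2}G_{z}(x)$, which both closes the diagrams and delivers the pointwise asymptotics \eqref{eq:High-D-G} in one pass. A Fourier bootstrap could plausibly be made to work, but it would be a genuinely different route and would still require Proposition~\ref{prop:ASM} (and the differential inequality of Lemma~\ref{lem:DI-Chi}, which also rests on it) to control the susceptibility and the diagrams.
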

\Cref{thm:High-D} shows that the critical two-point function of
$\kappa$-ASAW has the same asymptotics as the critical ($z=1)$
two-point function of simple random walk in $d\geq 5$. In the language
of critical exponents, see~\cite[p.12]{Slade}, this says that
$\eta=0$, i.e., this is a verification that $\kappa$-ASAW has
mean-field behaviour. We have not attempted to optimize the relation
between $\kappa_{0}$ and $L$ in our proof, as our primary interest is
in the existence of $\kappa_{0}>0$ for finite $L$.

It is typically difficult to apply the lace expansion to models
containing attracting interactions, as these attractions make it
difficult to obtain what are known as diagrammatic bounds. We are able
to overcome this difficulty as the {\em on average repulsion} that
$\kappa$-ASAW satisfies is compatible with calculating such
bounds. This is discussed in more detail in \Cref{sec:main-idea}. Once
the diagrammatic bounds are obtained the remaining part of the lace
expansion analysis is well understood and can be adapted from existing
arguments~\cite{HvdHS}. We recall how this can be done in
\Cref{app:GA}. The proof of \Cref{thm:High-D} is carried out
in~\Cref{sec:Lace}.

\subsection{Main idea}
\label{sec:main-idea}

The proofs of \Cref{thm:CC-Exists-Intro,thm:High-D} are essentially
independent, but they share a common idea which we explain here. 

Let $\saw$ denote the set of all self-avoiding walks, not necessarily
starting at the origin $o$. Writing a walk $\omega$ as a concatenation
$\omega = \omega^{1}\circ\omega^{2}$ of two subwalks determines an
interaction conditional on $\omega^{1}$, i.e.,
\begin{equation}
  \label{eq:Conditional-Interaction}
  \indicatorthat{\omega\in\saw} e^{-H_{\kappa}(\omega)} =
  \ob{\indicatorthat{\omega^{1}\in\saw}e^{-H_{\kappa}(\omega^{1})}}
  \ob{\indicatorthat{\omega \in\saw} e^{-H_{\kappa}(\omega^{2};\,\omega^{1})}},
\end{equation}
where this formula defines $H_{\kappa}(\cdot\,;\omega^{1})$. Explicitly,
\begin{equation}
  \label{eq:Conditional-Interaction-Expl}
 \exp(- H_{\kappa}(\omega^{2};\omega^{1})) \bydef
  (1+\kappa)^{\abs{\adj(\omega^{2})}}(1+\kappa)^{\abs{\adj(\omega^{1},\,\omega^{2})}}, 
\end{equation}
where $\adj(\omega^{1},\,\omega^{2})$ is the set of pairs of adjacent
edges  $\{f_{1},f_{2}\}$ with $f_{i}\in\omega^{i}$, $i=1,2$.

For SAW, i.e., $\kappa=0$, the interaction is trivial:
$e^{-H_{0}(\omega)}=e^{-H_{0}(\omega^{2};\omega^{1})}=1$. Submultiplicativity
therefore follows from~\eqref{eq:Conditional-Interaction} and the
observation that
\begin{equation*}
  \indicatorthat{\omega\in\saw} =
  \indicatorthat{\omega^{1}\circ\omega^{2}\in\saw}\leq
  \indicatorthat{\omega^{2}\in\saw}.
\end{equation*}
For $\kappa>0$ it is not generally true that
$e^{-H_{\kappa}(\eta;\,\omega^{1})} \leq e^{-H_{\kappa}(\eta)}$. See
\Cref{fig:Walk-Example} and consider splitting the walk into the
indicated subwalks.

\Cref{eq:Conditional-Interaction} highlights a tension between
self-avoidance and self-attraction. Energetic rewards of $(1+\kappa)$
due to the conditional interaction only occur if the walk $\omega^{2}$
has edges adjacent to edges in $\omega^{1}$. Such an edge in
$\omega^{2}$ carries an entropic penalty, as the potential
configurations of $\omega^{2}$ are reduced. Thus there is both an
entropic benefit and an energetic penalty to dropping the conditional
interaction due to $\omega^{1}$ when
\eqref{eq:Conditional-Interaction} is summed over a suitable class of
walks.

More explicitly, if $\omega^{2}$ contains an edge $\{x_{1},y_{1}\}$
adjacent to an edge $\{x_{2},y_{2}\}$ of $\omega^{1}$, then typically
there is a self-avoiding modification of $\omega^{2}$ that traverses
$\{x_{2},y_{2}\}$ instead of $\{x_{1},y_{1}\}$. The modified walk will
be longer than the original, and will be assigned zero weight by the
conditional interaction. However, it has positive $\kappa$-ASAW
weight. The entropic gain of ignoring~$\omega^{1}$ can therefore be
estimated by considering the possible modifications to~$\omega^{2}$
and estimating the energetic cost of the modifications. The energetic
cost decreases as $\kappa$ decreases, and for $\kappa$ sufficiently
small we will show that the entropic benefit outweighs the energetic
penalty. This idea, which involves a weighted version of the 
multivalued map principle~\cite[Section~2.0.1]{Hammond1}, has been fruitful in obtaining upper bounds
on the number of self-avoiding polygons of given
length~\cite{Hammond1}.

\subsection{Discussion}
\label{sec:disc}

Ueltschi~\cite{Ueltschi} considered a model of SAWs with an attracting
reward for pairs of nearest-neighbour vertices under the assumption
that the step distribution $D(x)$ and attraction strength $\kappa$
satisfy
\begin{equation}
  \label{eq:Ueltschi-D}
  \inf_{\abs{x-y}=1,y\neq 0} \frac{D(y)}{D(x)} = \Delta > 0, \qquad
  (1+\kappa)^{2d} \leq 1+ \frac{\Delta^{2}}{2d(1+\kappa)^{2d-1}}.
\end{equation}
Note that the condition on $D(x)$ in~\eqref{eq:Ueltschi-D} implies the
step distribution has infinite range. Given~\eqref{eq:Ueltschi-D} it
can be shown that the entropic reward of ignoring $\omega^{1}$
outweighs the energetic cost. The fact that $D(x)$ has infinite range
and is ``smooth'' allows the use of a length preserving transformation
to prove the model is submultiplicative. Using this idea Ueltschi also
carries out a lace expansion analysis via the inductive approach
of~\cite{vdHS}; the length-preserving nature of the transformation is
important for the application of the inductive method. Ueltschi's
result was significant for being the first application of the lace
expansion to a self-attracting random walk. Self-attracting
interactions, which are also called non-repulsive, are typically
difficult to handle with lace expansion
methods~~\cite[Section~6.3]{Slade}.

The problem of analysing models of self-attracting self-avoiding walks
under weaker hypotheses on the step distributions was raised by den
Hollander~\cite[Chapter 4.8(5)]{dH}, and our work addresses
  this question when $d\geq 5$. As described in \Cref{sec:main-idea} the
main idea is to combine energy-entropy methods with classical techniques
for self-avoiding walk.

Beyond our main theorems, an important aspect of this work is that it
suggests that energy-entropy methods may be more generally useful in
the context of the lace expansion. In particular there is no need to
restrict to length-preserving transformations as in~\cite{Ueltschi}
(although length-preserving transformations do simplify technical
aspects due to~\cite{vdHS}). This is significant as energy-entropy
methods should be a fairly robust way to overcome a lack of repulsion
caused by weak attractions. Roughly speaking, the key step in
  such an argument is to first subdivide an object, and then to prove the
  gain in conformational freedom that arises when forgetting one part
  outweighs the loss of energetic attractions. Our proof implements
  this strategy for $\kappa$-ASAW, and it is plausible it could
  be implemented for other models, e.g., weakly self-attracting
  lattice trees in high dimensions via an adaptation of~\cite{HaraSladeTree,HvdHS}.

It is worth noting that energy-entropy arguments are carried out by
finding a transformation that estimates the number of new
configurations that are available. Finding a transformation is a
combinatorial and analytic problem, in contrast to other approaches to
overcoming a lack of repulsion via correlation
inequalities~\cite{Sakai,Slade}, resummation
identities~\cite{Helmuth}, or asymmetry
assumptions~\cite{vdHH}.

We end this section by mentioning two recent related works on
self-avoiding random walks subject to self-attraction. Firstly, there
has been interesting progress~\cite{BSW} on den Hollander's problem
for \emph{weakly} self-avoiding walk (WSAW) with a contact
self-attraction when $d=4$. The authors prove Gaussian decay of the
critical two-point function when the self-attraction and
self-repulsion strengths are sufficiently small by making use of a
rigorous renormalization group analysis. The techniques of~\cite{BSW}
are wholly different than those of the present paper, and an analysis
of self-attracting WSAW when $d\geq 5$ via lace expansion techniques
would be a very interesting complement to the results of~\cite{BSW}.
Secondly, in~\cite{PetrelisTorri} it has been shown that a related
model known as \emph{prudent self-avoiding walk} undergoes a collapse
transition in $d=2$ when the self-attraction is strong enough.

\section{Initial definitions, path transformations}
\label{sec:Transformations}

\subsection{Conventions}
\label{sec:conventions}

By a common abuse of notation $\Z^{d}$ will denote the $d$-dimensional
hypercubic lattice, i.e., the graph with vertex set $\Z^{d}$ and edge
set $E(\Z^{d})\bydef\{\{x,y\} \mid \norm{x-y}_{1}=1\}$. Recall
that the standard generators of $\Z^{d}$ will be denoted
$e_{1},\dots, e_{d}$. $\abs{A}$ will denote the cardinality of a
finite set $A$, and $A\sqcup B$ will denote the union of
  disjoint sets $A$ and $B$.

For $n\in\N \bydef\{0,1,2,\dots\}$, an \emph{$n$-step walk} is
a sequence $(\omega_{i})_{i=0}^{n}$, where $\omega_{i}\in\Z^{d}$ for
$0\leq i\leq n$ and $\omega_{i}\neq \omega_{i-1}$, $1\leq i\leq
n$. For such a walk let $\abs{\omega}\bydef n$, and for
  $0\leq i<j\leq \abs{\omega}$ we write $\omega_{\cb{i,j}}$ to denote
  the walk $(\omega_{i}, \omega_{i+1}, \dots, \omega_{j})$. Let
$\walk_{n}(x,y)$ be the set of $n$-step walks with $\omega_{0}=x$ and
$\omega_{n}=y$. We omit the first argument if $x=o$, the origin of
$\Z^{d}$, and let $\walk(x,y) \bydef \bigsqcup_{n\geq
  0}\walk_{n}(x,y)$. We also let $\walk_{n}$ denote the set of
  all $n$-step walks, with no constraints on the initial or final vertices.

A walk is \emph{self-avoiding} if $\omega_{i}\neq\omega_{j}$ for
$i\neq j$, and is a \emph{self-avoiding polygon} if $\abs{\omega}>2$,
$\omega_{i}\neq \omega_{j}$ for $0\leq i<j<\abs{\omega}$, and
$\omega_{0}=\omega_{\abs{\omega}}$. Note that polygons are
  rooted and oriented, which is a somewhat non-standard definition.
Let $\saw_{n}(x,y)$ denote the set of $n$-step self-avoiding walks
from $x$ to $y$ and $\sap_{n}(x)$ denote the set of $n$-step
self-avoiding polygons with initial vertex $x$. For self-avoiding
walks let $\saw_{n}(x) \bydef \saw_{n}(o,x)$, and again we will omit
the subscript $n$ to indicate a union over $n$.  $\saw$ and $\sap$
denote the sets of all self-avoiding walks and polygons, respectively,
with no restrictions on the initial vertex.

Let $\adj(A,B)$ denote the set of plaquettes spanned by pairs of
adjacent edges $e\in A$, $f\in B$ for subsets $A,B\subset E(\Z^{d})$,
and let $\adj(A) \bydef \adj(A,A)$. Let
$E(\omega) \bydef \{ \{\omega_{i},\omega_{i+1}\}\}_{i=0}^{\abs{\omega}-1}$
be the set of edges traversed by a walk $\omega$. By a slight abuse of
notation we will write $\adj(\omega,\eta)$ in place of
$\adj(E(\omega),E(\eta))$, and $\adj(\omega)\bydef \adj(\omega,\omega)$.

\subsection{Transformations by symmetries of $\Z^{d}$; basic path
  operations}
\label{sec:Symmetries}

For $x\in \Z^{d}$, let $\trans_{x}$ denote the operator of translation
by $x$, i.e., $\trans_{x}f(y) = f(y-x)$ for $f$ a function on
$\Z^{d}$. Translations will also act on subsets or collections of
subsets of $\Z^{d}$ by identifying sets with indicator functions. For
example, if $\omega\in \walk_{n}$, $\trans_{x}\,\omega$ is the $n$-step
walk $(\omega_{0}+x, \omega_{1}+x, \dots, \omega_{n}+x)$.

The projection operator $\pi_{i}\colon \Z^{d}\to \Z$ maps
$x = (x_{1}, \dots, x_{d})$ to $x_{i}$. To lighten notation, let
$\pi_{i}^{-1}(x) = \pi_{i}^{-1}(\pi_{i}(x))$ denote the hyperplane
passing through $x$ with normal $e_{i}$. The reflection operator
$\refl_{i}\colon \Z^{d}\to \Z^{d}$ reflects any vertex in the coordinate
hyperplane $\pi_{i}^{-1}(o)$.

If $\omega^{1}\in\walk_{m}$ and $\omega^{2}\in\walk_{n}$ their
\emph{concatenation} $\eta = \omega^{1}\circ \omega^{2}$ is the
$(n+m)$-step walk with $\eta_{i}=\omega^{1}_{i}$ for $0\leq i\leq m$,
and
$\eta_{m+i} = \trans_{(\omega^{1}_{m}-\omega^{2}_{0})}\,
\omega^{2}_{i}$ for $0\leq i\leq n$. This translation moves the
initial vertex of $\omega^{2}$ to the terminal vertex of $\omega^{1}$,
so the concatenated walk continues from where $\omega^{1}$ ends.

\subsection{Flips}
\label{sec:Flips}

Let $\omega$ be a walk and $P$ be a plaquette such that there is a
unique $i$ such that $\omega_{j}\in P$ iff $j\in \{i,i+1\}$.  We will
call such a plaquette $P$ \emph{flippable}. This
  condition implies that $\omega$ has exactly one edge in the
plaquette $P$, and no other vertices in $P$. Otherwise $P$ is
\emph{not flippable}. Since flippability is defined in terms of
$\omega$ we will write, for example, \emph{flippable for
  $\omega$} to indicate this dependence.

Suppose $P$ is a flippable plaquette for $\omega$, and that
$(\omega_{i},\omega_{i+1})$ is the unique edge of $\omega$ in $P$. The
\emph{flip of $\omega$ at $P$}, denoted $\flip_{P}(\omega)$, is the
walk $\omega^{\prime}$ that replaces $(\omega_{i},\omega_{i+1})$ with
the traversal of $P$ along the three edges distinct from
$\{\omega_{i},\omega_{i+1}\}$.  See \Cref{fig:Flip}. If $P$ is not
flippable, define $\flip_{P}(\omega) = \omega$. Two plaquettes
$P_{1}$ and $P_{2}$ are said to be \emph{disjoint} if they have no
vertices in common. Sets of disjoint flippable plaquettes are
what will be entropically important in what follows. The next three
lemmas establish useful properties of $\flip_{P}$.

\begin{lemma}
  \label{lem:Flipping-Disjoint}
  For any plaquette $P$ and vertices $x,y\in\Z^{d}$,
  $\flip_{P}\colon \walk(x,y)\to\walk(x,y)$, 
  $\flip_{P}\colon \saw(x,y)\to\saw(x,y)$, and
    $\flip_{P}$ is invertible. If $P'$ is disjoint from
  $P$, then $\flip_{P}\circ \flip_{P'}=\flip_{P'}\circ \flip_{P}$.
\end{lemma}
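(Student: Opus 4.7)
The plan is to verify each claim by a direct local analysis of the action of $\flip_{P}$, exploiting the fact that outside the plaquette $P$ the walk is unchanged, and using the key structural input that flippability means $\omega$ meets $P$ in exactly two consecutive vertices and no others.

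For the first two assertions, if $P$ is not flippable for $\omega$ then $\flip_{P}(\omega) = \omega$ and there is nothing to show. Otherwise, let $(\omega_{i},\omega_{i+1})$ be the unique edge of $\omega$ in $P$. The flip inserts between indices $i$ and $i+1$ the two remaining corners of $P$, traversed along the three edges of $P$ distinct from $\{\omega_{i},\omega_{i+1}\}$. This modification is entirely in the interior of the walk, so $\omega_{0}$ and $\omega_{\abs{\omega}}$ are preserved and $\flip_{P}(\omega) \in \walk(x,y)$. For self-avoidance, flippability guarantees that the two inserted corners of $P$ are not already visited by $\omega$, so no vertex is repeated and $\flip_{P}(\omega) \in \saw(x,y)$ whenever $\omega \in \saw(x,y)$.

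For invertibility, I would exhibit an explicit inverse $\flip_{P}^{-1}$ that reverses the $1$-edge to $3$-edge replacement: if a walk $\omega'$ has four consecutive indices $k,\ldots,k+3$ with $\omega'_{j} \in P$ exactly when $j \in \{k,k+1,k+2,k+3\}$, replace the three edges between these indices by the single edge $\{\omega'_{k},\omega'_{k+3}\}$; otherwise act as the identity. A case analysis on whether $P$ appears in $\omega$ in the $1$-edge configuration, in the $3$-edge configuration, or in neither, shows that $\flip_{P}^{-1}\circ\flip_{P} = \flip_{P}\circ\flip_{P}^{-1} = \mathrm{id}$. Equivalently, $\flip_{P}$ can be viewed as the involution on $\walk(x,y)$ that swaps the two local configurations of $P$ and fixes all other walks.

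For commutativity when $P$ and $P'$ are disjoint, the central observation is that the vertices and edges altered by $\flip_{P}$ lie entirely in $P$, hence are disjoint from $P'$; consequently flippability of $P'$ for $\omega$ and for $\flip_{P}(\omega)$ coincide, and symmetrically for $P$. A case analysis on which of $P, P'$ are flippable for $\omega$ then yields $\flip_{P}\circ\flip_{P'}(\omega) = \flip_{P'}\circ\flip_{P}(\omega)$ in each case, as the two flips act on disjoint portions of the walk. The main subtlety is the invertibility statement: one must correctly identify the image of $\flip_{P}$ and confirm that the unflip is well-defined on all of $\walk(x,y)$. The disjointness hypothesis is essential to commutativity, since without it a flip at one plaquette could create or destroy flippability at the other by introducing shared vertices.
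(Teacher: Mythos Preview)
Your overall approach matches the paper's: a local analysis showing endpoints are preserved, self-avoidance is preserved because the two inserted vertices of $P$ are previously unvisited, the flip can be undone by replacing the three-edge traversal of $P$ by the single unoccupied edge, and commutativity holds because flips at disjoint plaquettes modify disjoint portions of the walk. Your observation that disjointness guarantees flippability of $P'$ is unaffected by $\flip_{P}$ is a useful detail the paper leaves implicit.

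One point needs correction: $\flip_{P}$ is \emph{not} an involution. After flipping a flippable $P$, the resulting walk traverses three consecutive edges of $P$, so $P$ is no longer flippable and a second application of $\flip_{P}$ acts as the identity rather than undoing the flip. Your explicit $\flip_{P}^{-1}$ is the correct inverse and is a genuinely different map from $\flip_{P}$, so the ``equivalently'' should be dropped. This also breaks your case analysis for $\flip_{P}^{-1}\circ\flip_{P}=\mathrm{id}$: if $\omega$ already sits in the three-edge configuration at $P$, then $\flip_{P}(\omega)=\omega$ while your $\flip_{P}^{-1}(\omega)\neq\omega$. The paper's argument is also informal on this point; what is actually needed downstream (e.g.\ in \Cref{lem:Flip-Invert}) is that $\flip_{P}$ is injective on walks for which $P$ is flippable, and your construction of the unflip does establish that.
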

\begin{proof}
  To prove $\flip_{P}\colon \walk(x,y)\to\walk(x,y)$ it suffices to
  prove that $\flip_{P}$ does not change the endpoints of a walk. This
  is immediate as the first and last vertices of $\flip_{P}(\omega)$
  in $P$ are the same as the first and last vertices of $\omega$ in
  $P$.

  If $\omega\in\saw(x,y)$ and $P$ is not flippable for $\omega$,
  then $\flip_{P}(\omega)=\omega$, so the image is in $\saw(x,y)$. If
  $P$ is flippable, then $\omega$ contains one edge of $P$ and no
  other vertices; since $\flip_{P}$ only modifies $\omega$ on $P$ the
  result is self-avoiding.

  Invertibility of $\flip_{P}$ is clear, as if $P$ is not
  flippable for $\omega$ then $\flip_{P}$ is the identity, while
  if $P$ is flippable then $\omega$ can be recovered from
  $\flip_{P}(\omega)$ by replacing the traversal of three consecutive
  edges of $P$ in $\flip_{P}(\omega)$ by the traversal of the single
  unoccupied edge of $P$. Lastly, commutativity holds as flips at
  disjoint plaquettes modify the walk on disjoint sets of edges.
\end{proof}

Given a disjoint set of plaquettes $B=\{P_{1},\dots, P_{k}\}$, define
$\flip_{B}(\omega) \bydef
\flip_{P_{1}}\circ\dots\circ\flip_{P_{k}}(\omega)$; the commutativity
of flips at disjoint plaquettes implies the  definition of $\flip_{B}$ is unambiguous.
\begin{lemma}
  \label{lem:Flip-Invert}
  Let $\eta\circ\omega\in\saw$ and let $B\subset \adj(\eta,\omega)$ be
  a disjoint set of plaquettes that are flippable for $\omega$. Then
  $\omega$ is uniquely determined by $\flip_{B}(\omega)$ and $\eta$.
\end{lemma}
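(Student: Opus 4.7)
The plan is to reconstruct both $B$ and $\omega$ from the pair $(\eta, \omega')$, where $\omega'\bydef \flip_B(\omega)$. The central observation is that the hypothesis $\eta\circ\omega\in\saw$ forces $\eta$ and $\omega$ to share no vertex other than the concatenation point, so any edge common to $\eta$ and $\omega'$ must have been introduced by a flip at some plaquette of $B$.

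First I would identify $B$ from $(\eta,\omega')$. For each $P\in B$, the hypothesis $P\in\adj(\eta,\omega)$ provides a unique edge $f_P$ of $\eta$ in $P$ and a unique edge $g_P$ of $\omega$ in $P$; these are the two parallel sides of the plaquette. The flip $\flip_P$ removes $g_P$ from the walk and inserts the three other sides of $P$, in particular $f_P$, so $f_P\in E(\eta)\cap E(\omega')$. I would then show this is the \emph{only} way an edge of $\eta$ can appear in $\omega'$. Since $\eta$ and $\omega$ are vertex-disjoint, no edge of $\omega$ is an edge of $\eta$; hence any $e\in E(\eta)\cap E(\omega')$ is introduced by a flip at some $P'\in B$. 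Of the three edges introduced at $P'$, the two perpendicular to $g_{P'}$ are incident to endpoints of $g_{P'}$, which are vertices of $\omega$; such an edge cannot lie in $\eta$ without violating self-avoidance of $\eta\circ\omega$. Therefore $e=f_{P'}$. Since plaquettes in $B$ are pairwise vertex-disjoint they have pairwise disjoint edge sets, so the assignment $P\mapsto f_P$ is injective, hence a bijection from $B$ onto $E(\eta)\cap E(\omega')$. For each such $e$, tracing $\omega'$ one step before and one step after traversing $e$ identifies two further edges which, together with $e$, fix three of the four sides of a unique plaquette in $\Z^d$; this is the corresponding $P\in B$.

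Second, with $B$ identified from $(\eta,\omega')$ I would invert the flips. For each $P\in B$, the walk $\omega'$ contains three consecutive edges traversing the sides of $P$ other than $g_P$; replacing this detour by the single edge $g_P$ is the inverse of $\flip_P$, by the invertibility recorded in \Cref{lem:Flipping-Disjoint}. Because plaquettes in $B$ are pairwise disjoint, these local inversions act on disjoint portions of the walk and so commute, yielding $\omega$ unambiguously.

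The main obstacle is the surjectivity argument above, namely excluding the possibility that one of the two perpendicular edges introduced at a flipped plaquette coincidentally equals an edge of $\eta$. This is precisely where the full strength of the self-avoidance of $\eta\circ\omega$, as opposed to the self-avoidance of $\eta$ and $\omega$ separately, is used: it forbids $\eta$ from touching the vertices $\omega_i,\omega_{i+1}$ that would be required for such a coincidence.
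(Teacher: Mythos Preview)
Your approach is the same as the paper's: recover $B$ from the set $E(\eta)\cap E(\omega')$ and then invert via \Cref{lem:Flipping-Disjoint}. There is, however, a small gap. You argue that the two perpendicular edges introduced at a flipped plaquette $P$ cannot lie in $\eta$ because they are incident to the endpoints $\omega_i,\omega_{i+1}$ of $g_P$, which are vertices of $\omega$---but you yourself noted in your first paragraph that $\eta$ \emph{does} touch the concatenation vertex $\omega_0$. If $i=0$, one of the perpendicular edges is incident to $\omega_0$, and nothing forbids $\eta$ from containing it: concretely, $\eta$ can end by traversing two consecutive sides of $P$ and arriving at $\omega_0$, after which $\omega$ takes the third side $\{\omega_0,\omega_1\}$. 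In that scenario $E(\eta)\cap E(\omega')$ contains two edges at $P$, your bijection claim fails, and your ``one step before, one step after'' rule is undefined because the offending edge is the very first edge of $\omega'$.

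The paper handles this with a brief case split: at each $P\in B$, either exactly one edge of $\omega'$ in $P$ lies in $\eta$ (and $P$ is recovered exactly as you describe), or exactly two do (and those two edges, being two sides of a common plaquette, span $P$); three is impossible since it would force both $\omega_i$ and $\omega_{i+1}$ to lie in $\eta$, hence both to equal $\omega_0$. With this one-line amendment your argument is complete and coincides with the paper's.
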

\begin{proof}
  Since the plaquettes in $B$ are disjoint, we can consider them
  separately. For each $P\in B$, there is at least one edge of $P$ in
  $\tilde\omega = \flip_{P}(\omega)$ that is also in $\eta$.  Suppose
  there is one edge $(\tilde\omega_{i},\tilde\omega_{i+1})$. Then $P$
  is the plaquette
  $\{\tilde\omega_{i-1}, \tilde\omega_{i}, \tilde\omega_{i+1},
  \tilde\omega_{i+2}\}$. If there are two edges of $\tilde\omega$ in
  $\eta$ then $P$ is the plaquette spanned by the two edges, and three
  or four edges being in $\eta$ contradicts $\eta\circ\omega\in\saw$.
  
  Thus, given $\eta$ and $\tilde\omega$, we can determine $B$. By
  \Cref{lem:Flipping-Disjoint} $\flip_{B}$ is invertible, so $\omega$
  is uniquely determined.
\end{proof}

Suppose a self-avoiding walk $\eta$ is composed of two subwalks
$\omega^{1}$ and $\omega^{2}$, i.e.,
$\eta = \omega^{1}\circ\omega^{2}$. It will be convenient to abuse
notation and write $\adj(\omega^{1},\omega^{2})$ in place of
$\adj(\omega^{1},\trans_{x}\omega^{2})$, where $\trans_{x}$ is the
translation that takes the initial vertex of $\omega^{2}$ to the final
vertex of $\omega^{1}$. As it will be contextually clear we are
discussing pairs of adjacent edges between $\omega^{1}$ and
$\omega^{2}$, this should not cause any confusion.

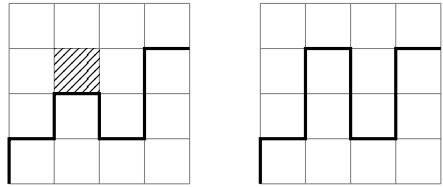
\begin{figure}[h]
  \centering
  \begin{tikzpicture}[scale=.6]
    \draw[gray] (0,0) grid (4,4);
    \draw[black, very thick] (0,0) -- (0,1) -- (1,1) -- (1,2) -- (2,2) -- (2,1) --
    (3,1) -- (3,3) -- (4,3);
    \fill[pattern= north east lines] (1,3) rectangle (2,2);
  \end{tikzpicture}
  \qquad
  \begin{tikzpicture}[scale=.6]
    \draw[gray] (0,0) grid (4,4);
    \draw[black, very thick] (0,0) -- (0,1) -- (1,1) -- (1,3) -- (2,3)
    --(2,1) --
    (3,1) -- (3,3) -- (4,3);
  \end{tikzpicture}
  \caption{Illustration of a flip applied to a self-avoiding walk
    $\omega$ at the shaded plaquette $P$, which is flippable.}
  \label{fig:Flip}
\end{figure} 

The next lemma says most plaquettes in $\adj(\omega^{1},\omega^{2})$
are flippable for $\omega^{2}$; to quantify this we define
\begin{equation}
  \label{eq:nrp}
  \nrp\bydef 2d(d-1).
\end{equation}

\begin{lemma}
  \label{lem:Adjacent-Flippable}
  If $\omega^{1}\circ\omega^{2}\in\saw$, there are at most $\nrp$
  plaquettes in $\adj(\omega^{1},\omega^{2})$ that are not
  flippable for $\omega^{2}$. If
  $\omega^{1}\circ\omega^{2}\in\sap$, there are at most $2\nrp$
  plaquettes in $\adj(\omega^{1},\omega^{2})$ that are not
  flippable for $\omega^{2}$.
\end{lemma}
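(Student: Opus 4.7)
The approach I would take is to show that any non-flippable plaquette $P \in \adj(\omega^1,\omega^2)$ must contain one of the (at most two) vertices at which $\omega^1$ and $\omega^2$ meet, and then invoke the fact that any vertex of $\Z^d$ belongs to exactly $\nrp = 2d(d-1)$ plaquettes (choose a $2$-plane in $\binom{d}{2}$ ways and one of $4$ unit squares around the vertex in that plane).

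The key step is the following vertex count inside $P$. If $P \in \adj(\omega^1,\omega^2)$, then by definition there exist opposite edges $e \in E(\omega^1)$ and $f \in E(\omega^2)$ with $V(e) \cup V(f) = V(P)$. In particular $\omega^2$ contains the edge $f$, so $\{j : \omega^2_j \in P\}$ contains some consecutive pair $\{k,k+1\}$; if moreover $P$ is \emph{not} flippable for $\omega^2$, the uniqueness clause in the flippability definition forces this containment to be strict, so $\omega^2$ visits at least $3$ distinct vertices of $P$. Meanwhile, $\omega^1$ contains the edge $e$ and hence at least $2$ vertices of $P$. Combining these inputs with inclusion-exclusion on $V(P)$,
\[
5 \,\leq\, |V(P) \cap V(\omega^1)| + |V(P) \cap V(\omega^2)| \,\leq\, |V(P)| + |V(P) \cap V(\omega^1) \cap V(\omega^2)| \,=\, 4 + |V(P) \cap V(\omega^1) \cap V(\omega^2)|,
\]
so $P$ must contain at least one vertex lying in $V(\omega^1) \cap V(\omega^2)$.

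It then suffices to identify $V(\omega^1) \cap V(\omega^2)$. If $\omega^1 \circ \omega^2 \in \saw$, the self-avoidance of the concatenation together with the paper's convention that $\omega^2$ has been translated so that $\omega^2_0$ coincides with the terminal vertex of $\omega^1$ forces $V(\omega^1) \cap V(\omega^2) = \{\omega^2_0\}$; hence every non-flippable plaquette contains this single joining vertex, yielding the bound $\nrp$. If $\omega^1 \circ \omega^2 \in \sap$, the polygon condition additionally identifies $\omega^1_0$ with the terminal vertex of $\omega^2$, so $|V(\omega^1) \cap V(\omega^2)| \leq 2$ and each non-flippable plaquette contains at least one of these two vertices, yielding the bound $2\nrp$. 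I do not anticipate a serious obstacle: the argument is a short counting exercise once the flippability definition is correctly read to give the ``$\geq 3$ vertices of $\omega^2$ in $P$'' bound, which I expect to be the only mildly delicate step.
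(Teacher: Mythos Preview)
Your proposal is correct and reaches the same key conclusion as the paper---every non-flippable plaquette in $\adj(\omega^{1},\omega^{2})$ contains a point of concatenation---but by a different and somewhat cleaner route. The paper argues case by case on the number of edges of $P$ lying in $\omega^{1}\circ\omega^{2}$ (two, three, or four), checking in each case whether $P$ can fail to be flippable for $\omega^{2}$ without containing a concatenation vertex. Your argument replaces this casework with a single inclusion--exclusion count: $\omega^{1}$ contributes at least two vertices to $P$ (from its edge $e$), non-flippability together with self-avoidance of $\omega^{2}$ forces $\omega^{2}$ to contribute at least three, and $|V(P)|=4$ then gives $|V(P)\cap V(\omega^{1})\cap V(\omega^{2})|\geq 1$. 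Both approaches are short; yours is more uniform and avoids the figure-driven enumeration, while the paper's casework has the minor advantage of making explicit exactly which local configurations (three consecutive edges with two in $\omega^{2}$, etc.) actually cause the failure---information that is alluded to, though not strictly needed, when the lemma is invoked later in the proof of \Cref{prop:ASM}.
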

\begin{proof}
  Without loss of generality, assume that $\omega^{2}_{0}$ is the
  endpoint of $\omega^{1}$, and call the vertices in common to
  $\omega^{2}$ and $\omega^{1}$ \emph{points of concatenation}. The
  proof characterises when $P\in\adj(\omega^{1},\omega^{2})$ is not
  flippable for $\omega^{2}$ case by case, depending on how many
  edges of $P$ are contained in $\omega^{1}\circ\omega^{2}$. By the
  definition of $P\in\adj(\omega^{1},\omega^{2})$ there are at
  least two such edges.
  
  First, note that a self-avoiding walk or polygon containing four
  edges in a single plaquette is a four step self-avoiding polygon. The
  claim is true in this case, as there are exactly two adjacent
  pairs of edges. Henceforth we may assume there are no
  plaquettes containing four edges.

  Suppose that $\omega^{1}$ and $\omega^{2}$ each contain exactly one
  edge of $P$. If $P$ does not contain a point of concatenation, then
  $P$ is flippable for $\omega^{2}$ since the two vertices
  of $P$ in $\omega^{1}$ are not in $\omega^{2}$. See
  \Cref{fig:Flippable-2e}. If $P$ contains a point of concatenation it
  may or may not be flippable for $\omega^{2}$.
 
  Suppose that $\omega^{1}\circ\omega^{2}$ contains three edges of
  $P$. Note that the three edges must occur sequentially in
  $\omega^{1}\circ \omega^{2}$ since this walk is self-avoiding or a
  self-avoiding polygon, and hence $P$ must contain a point of
  concatenation if it is to be in $\adj(\omega^{1},\omega^{2})$.  If
  two edges belong to $\omega^{1}$, then $P$ is flippable for
  $\omega^{2}$. Otherwise, $P$ is not flippable for
  $\omega^{2}$. See \Cref{fig:Flippable-3e}. 

  Thus $P\in\adj(\omega^{1},\omega^{2})$ and $P$ not being
    flippable implies there is a point of concatenation in $P$.
  As there are at most two points of concatenation, this verifies the
  claim, as each vertex of $\Z^{d}$ is contained in $\nrp$ plaquettes.
\end{proof}

\begin{figure}[h]
  \centering
  \begin{subfigure}{2in}
    \centering
    \begin{tikzpicture}[dot/.style={},scale=1.3] 
      \node[dot] (ll) at (0,0) {};
      \node[dot] (tl) at (0,1) {};
      \node[dot] (lr) at (1,0) {};
      \node[dot] (tr) at (1,1) {};
      \draw[black,thick, fill=black!25] (ll) circle (3pt);
      \draw[black,thick, fill=black!25] (lr) circle (3pt);
      \draw[black,thick, fill=black!25] (tl) circle (3pt);
      \draw[black,thick, fill=black!25] (tr) circle (3pt);
      \draw[black,very thick] (ll) -- (tl);
      \draw[black,dashed,very thick] (tr) -- (lr);
      \draw[gray,thin] (ll) -- (lr);
      \draw[gray,thin] (tl) -- (tr);
    \end{tikzpicture}
    \qquad
    \begin{tikzpicture}[dot/.style={},scale=1.3] 
      \node[dot] (ll) at (0,0) {};
      \node[dot] (tl) at (0,1) {};
      \node[dot] (lr) at (1,0) {};
      \node[dot] (tr) at (1,1) {};
      \draw[black,thick, fill=black!25] (ll) circle (3pt);
      \draw[black,thick, fill=black!25] (lr) circle (3pt);
      \draw[black,thick, fill=black!25] (tl) circle (3pt);
      \draw[black,thick, fill=black!25] (tr) circle (3pt);
      \draw[black,very thick] (ll) -- (lr);
      \draw[black,dashed,very thick] (lr) -- (tr);
      \draw[gray,thin] (ll) -- (tl);
      \draw[gray,thin] (tl) -- (tr);
    \end{tikzpicture}
    \caption{$P$ contains $2$ edges of
      $\omega^{1}\circ\omega^{2}$.}
    \label{fig:Flippable-2e}
  \end{subfigure}
  \quad
  \begin{subfigure}{2in}
    \centering
    \begin{tikzpicture}[dot/.style={},scale=1.3] 
      \node[dot] (ll) at (0,0) {};
      \node[dot] (tl) at (0,1) {};
      \node[dot] (lr) at (1,0) {};
      \node[dot] (tr) at (1,1) {};
      \draw[black,thick, fill=black!25] (ll) circle (3pt);
      \draw[black,thick, fill=black!25] (lr) circle (3pt);
      \draw[black,thick, fill=black!25] (tl) circle (3pt);
      \draw[black,thick, fill=black!25] (tr) circle (3pt);
      \draw[black,very thick] (ll) -- (lr) -- (tr);
      \draw[black,dashed,very thick] (tr) -- (tl);
      \draw[gray,thin] (ll) -- (tl);
    \end{tikzpicture}
    \qquad
    \begin{tikzpicture}[dot/.style={},scale=1.3] 
      \node[dot] (ll) at (0,0) {};
      \node[dot] (tl) at (0,1) {};
      \node[dot] (lr) at (1,0) {};
      \node[dot] (tr) at (1,1) {};
      \draw[black,thick, fill=black!25] (ll) circle (3pt);
      \draw[black,thick, fill=black!25] (lr) circle (3pt);
      \draw[black,thick, fill=black!25] (tl) circle (3pt);
      \draw[black,thick, fill=black!25] (tr) circle (3pt);
      \draw[black,very thick] (ll) -- (lr);
      \draw[black,dashed,very thick] (lr) -- (tr) -- (tl);
      \draw[gray,thin] (ll) -- (tl);
    \end{tikzpicture}
    \caption{$P$ contains $3$ edges of $\omega^{1}\circ\omega^{2}$. }
    \label{fig:Flippable-3e}
  \end{subfigure}
  \caption{Illustration of the cases arising in the proof of
    \Cref{lem:Adjacent-Flippable}. Solid black lines represent edges
    of $\omega^{1}$, while dashed black lines represent edges of
    $\omega^{2}$.}
  \label{fig:Flippable}
\end{figure}
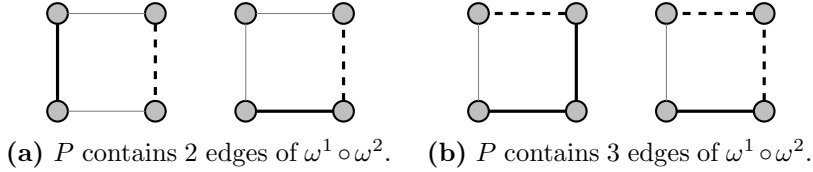

The cost
$\flipcost = \frac{\P_{n+2}(\omega^{\prime})}{\P_{n}(\omega)}$ is the
additional cost of the modified walk $\omega^{\prime}$ according to
the \emph{a priori} measure $\P$.  Recall the definition of
$\wt_{\kappa}$ in~\eqref{eq:Hamiltonian}. The next lemma is our basic
estimate for the energetic penalty of a flip. For future reference,
define
\begin{equation}
  \label{eq:tflipcost}
  \tflipcost \bydef \slb^{-2}(1+\kappa)^{2d-4}.
\end{equation}

\begin{lemma}
  \label{lem:TFC-Internal}
  Let $\omega$ be a self-avoiding walk and $P$ a flippable
  plaquette. Let $\omega^{\prime}=\flip_{P}(\omega)$. Then
  \begin{equation}
    \label{eq:TFC-Internal}
    \frac{\wt_{\kappa}(\omega^{\prime})}{\wt_{\kappa}(\omega)}
    \geq \tflipcost^{-1}. 
  \end{equation}
\end{lemma}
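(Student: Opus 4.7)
The plan is to compute the ratio of weights directly by separating the two contributions in $\wt_{\kappa} = e^{-H_{\kappa}}\P_{n}$: the \emph{a priori} random-walk factor and the Boltzmann factor $(1+\kappa)^{|\adj(\cdot)|}$. Let $e$ denote the unique edge of $\omega$ in $P$ and let $e_{1},e_{2},e_{3}$ denote the three edges of $P$ traversed by $\omega^{\prime}$ in its place. The flip increases the walk length by $2$, so $|\omega^{\prime}| = n+2$.

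For the random-walk factor, each of the three edges $e_{1},e_{2},e_{3}$ is a nearest-neighbour step, hence contributes a factor $D(\pm e_{i})=\slb$, while the removed edge $e$ also contributes $\slb$; the remaining steps are unchanged. Thus $\P_{n+2}(\omega^{\prime})/\P_{n}(\omega) = \slb^{3}/\slb = \slb^{2}$. The task then reduces to showing $|\adj(\omega^{\prime})|-|\adj(\omega)|\geq -(2d-4)$, since $\slb^{2}(1+\kappa)^{-(2d-4)} = \tflipcost^{-1}$ by~\eqref{eq:tflipcost}.

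To bound the change in adjacencies, I decompose pairs in $\adj(\omega)$ and $\adj(\omega^{\prime})$ according to whether they involve the modified edges. Every adjacent pair $\{f,g\}\subset E(\omega)$ with $f,g\neq e$ is also a pair in $E(\omega^{\prime})$, so these are preserved. The \emph{lost} adjacencies are exactly the pairs $\{e,g\}\in\adj(\omega)$. Since any edge lies in $2(d-1)$ plaquettes and within each plaquette has a unique opposite (hence unique adjacent) edge, $e$ has at most $2(d-1)$ neighbours in the adjacency relation; crucially, the plaquette $P$ itself contributes none of these, because $P$ being flippable for $\omega$ forces the edge of $P$ opposite to $e$ to be absent from $\omega$. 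Hence at most $2(d-1)-1 = 2d-3$ adjacencies are destroyed. On the gains side, the pair $\{e_{1},e_{3}\}$ (opposite edges of $P$) lies in $\adj(\omega^{\prime})\setminus \adj(\omega)$, giving at least one new adjacency. Combining, $|\adj(\omega^{\prime})|-|\adj(\omega)|\geq 1-(2d-3) = -(2d-4)$, which with the $\slb^{2}$ factor above yields~\eqref{eq:TFC-Internal}.

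The only delicate step is the ``$-1$'' improvement coming from the flippability hypothesis; without exploiting the absence of $e$'s opposite edge (in $P$) from $\omega$, one would obtain only $(1+\kappa)^{-(2d-3)}$ and the constant $\tflipcost$ would be off by a factor of $1+\kappa$. Everything else is straightforward bookkeeping: enumerating plaquettes containing a given edge, and using the fact that two edges in a common plaquette are adjacent iff they are opposite sides of that plaquette.
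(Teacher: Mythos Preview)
Your proof is correct and follows essentially the same approach as the paper's: both compute the $\slb^{2}$ factor from the a priori measure, bound the lost adjacencies by $2d-3$ using flippability (the opposite edge of $e$ in $P$ cannot be in $\omega$), and note at least one new adjacent pair is created inside $P$. Your write-up is slightly more explicit about which pair $\{e_{1},e_{3}\}$ furnishes the gain and why the losses are exactly the pairs $\{e,g\}$, but the argument is identical in substance.
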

\begin{proof}
  The factor of $\flipcost$ comes from comparing the \emph{a priori}
  measures in the definition of $\wt_{\kappa}$. What remains is to
  bound the difference $\abs{\adj(\omega)} - \abs{\adj(\omega')}$. 

  The flip creates at least one pair of adjacent edges in $\omega'$
  that was not present in $\omega$, namely the pair of adjacent edges
  of $\omega'$ in $P$. There are $2d-2$ edges that are potentially
  adjacent to the unique edge of $\omega$ in $P$, and the hypothesis
  of $P$ being flippable for $\omega$ implies at least one of these
  edges is not in $\omega$. Thus at most $2d-3$ adjacent pairs of
  edges in $\omega$ are not present in $\omega'$. This
  proves~\eqref{eq:TFC-Internal}.
\end{proof}

In what follows it will be necessary to flip many
plaquettes. \Cref{lem:Flipping-Disjoint} guarantees the result will be
self-avoiding if the flipped plaquettes are disjoint. The next lemma
guarantees that every collection of plaquettes has a positive density
subset of disjoint plaquettes. Define $\alpha = \alpha(d)$ by
\begin{equation}
  \label{eq:alpha}
  \alpha(d) \bydef \frac{1}{1+8(d-1)^{2}}.
\end{equation}

\begin{lemma}
  \label{lem:P-Fraction}
  Given a finite set $A$ of plaquettes in $\Z^{d}$, there exists a
  subset of pairwise disjoint plaquettes of $A$ of size $\ceil{\alpha
    \abs{A}}$.
\end{lemma}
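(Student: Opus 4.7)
The plan is to reduce the problem to finding a large independent set in a ``conflict graph'' and bound its maximum degree. Let $G$ be the graph with vertex set $A$ and an edge between $P, Q \in A$ whenever $P$ and $Q$ share a vertex of $\Z^{d}$; a subset of $A$ consisting of pairwise disjoint plaquettes is then precisely an independent set in $G$. A standard greedy procedure, namely repeatedly picking any remaining vertex of $G$ and deleting it together with its neighbors, produces an independent set of size at least $\lceil |A|/(1 + \Delta(G)) \rceil$, where $\Delta(G)$ denotes the maximum degree of $G$. It therefore suffices to show $\Delta(G) \leq 8(d-1)^{2}$.

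To bound $\Delta(G)$, I would fix $P \in A$ and count the plaquettes $Q \neq P$ of $\Z^{d}$ sharing at least one vertex with $P$ by inclusion-exclusion over the four vertices of $P$. Each vertex of $\Z^{d}$ lies in exactly $\nrp = 2d(d-1)$ plaquettes. Any three vertices of a plaquette determine the fourth, so distinct plaquettes share at most two vertices; moreover, any two shared vertices must be adjacent in $P$, since opposite vertices of a unit square uniquely determine the two coordinate directions spanning its plane, hence determine the plaquette. A given edge of $P$ lies in $2(d-1)$ plaquettes of $\Z^{d}$ (one for each sign and each of the $d-1$ perpendicular axes), so exactly $2d - 3$ plaquettes besides $P$ share that edge. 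Inclusion-exclusion then yields
\[
  4(\nrp - 1) - 4(2d - 3) \;=\; 8d(d-1) - 4 - 8d + 12 \;=\; 8(d-1)^{2}
\]
plaquettes of $\Z^{d}$ sharing a vertex with $P$, which bounds $\Delta(G)$.

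Applying the greedy procedure with this estimate furnishes an independent set of $G$ of size at least $\lceil |A|/(1 + 8(d-1)^{2}) \rceil = \lceil \alpha\, |A| \rceil$, which gives the desired collection of pairwise disjoint plaquettes. The main (albeit modest) obstacle is the combinatorial degree calculation: one must correctly rule out the possibility that distinct plaquettes share a diagonal pair of vertices, and ensure that edge-sharing is not double-counted across the inclusion-exclusion. Once $\Delta(G) \leq 8(d-1)^{2}$ is in hand, the conclusion is immediate from the standard greedy independent-set bound.
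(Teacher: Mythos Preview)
Your proof is correct and follows essentially the same approach as the paper: both use the greedy independent-set bound $\lceil |A|/(1+R)\rceil$ where $R$ is the maximum number of plaquettes $Q\neq P$ sharing a vertex with a fixed plaquette $P$, and both compute $R=8(d-1)^{2}$ via inclusion--exclusion over vertices and edges of $P$. Your presentation makes the conflict-graph and greedy argument slightly more explicit, and you additionally justify that shared vertices must form an edge (ruling out the diagonal case), but the substance is the same.
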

\begin{proof}
  The constant $\alpha$ is $(1+R)^{-1}$, where $R$ is the number of
  plaquettes $P'\neq P$ sharing a vertex with $P$. The remainder of
  the proof verifies the value of $R$ claimed in
    \eqref{eq:alpha} by performing inclusion-exclusion on the number
    of vertices $P'\neq P$ shares with $P$; note that this number is
    at most two.

  Any vertex $x$ is contained in exactly $4{d\choose 2}$
  plaquettes. To see this, note these plaquettes are in bijection with
  sets $\{(u_{i},\sigma_{i})\}_{i=1,2}$, where $u_{1}\neq u_{2}$ are
  distinct generators of $\Z^{d}$ and $\sigma_{i}\in\{\pm 1\}$: these
  sets identify the unique plaquette containing
  $x, x+ \sigma_{1}u_{1}, x+\sigma_{2}u_{2}$. Since every edge belongs
  to exactly $2(d-1)$ plaquettes, the total number of plaquettes
  sharing a vertex with a plaquette $P$ is therefore
  \begin{equation*}
    R = 4\ob{ (4{d\choose 2}-1) - (2(d-1)-1)} = 8(d-1)^{2},
  \end{equation*}
  where the factors of $-1$ correct for the presence of $P$ in our
  counts.
\end{proof}

\section{Existence of the connective constant: proof of \Cref{thm:CC-Exists-Intro}}
\label{sec:CC}

\subsection{Half-space walks and bridges}
\label{sec:Bridges}

We begin by recalling the basic definitions used in the
Hammersley-Welsh argument.

\begin{definition}
  The set $\bridge_{n}$ of \emph{$n$-step bridges} is the subset of
  $\omega\in \saw_{n}$ such that
  \begin{equation}
    \label{eq:Bridge}
    0=\pi_{1}(\omega_{0})< \pi_{1}(\omega_{i})\leq\pi_{1}(\omega_{n}),
    \qquad 1\leq i\leq n.
  \end{equation}
  The set $\hsw_{n}$ of \emph{$n$-step half-space walks} is the set of
  $\omega\in \saw_{n}$ such that
  \begin{equation}
    \label{eq:HS-Walk}
    0=\pi_{1}(\omega_{0})< \pi_{1}(\omega_{i}),\qquad 1\leq i\leq n.
  \end{equation}
\end{definition}
Let $\hsw\bydef\sqcup_{n\geq 0}\hsw_{n}$ and
$\bridge\bydef\sqcup_{n\geq 0}\bridge_{n}$.  The \emph{masses}
$h_{n}(\kappa)$ of $n$-step half-space walks and $b_{n}(\kappa)$ of
half-space bridges are defined by
\begin{equation}
  \label{eq:mass}
  h_{n}(\kappa) \bydef \sum_{\omega\in \hsw_{n}}\wt_{\kappa}(\omega),
  \qquad b_{n}(\kappa) \bydef \sum_{\omega\in\bridge_{n}}\wt_{\kappa}(\omega).
\end{equation}
Note that the inclusions $\bridge_{n}\subset\hsw_{n}\subset\saw_{n}$ imply that
$b_{n}(\kappa)\leq h_{n}(\kappa)\leq c_{n}(\kappa)$.

While self-attraction on adjacent edges ruins the
submultiplicativity of self-avoiding walks, it enhances the
supermultiplicativity of bridges.
\begin{proposition}
  \label{prop:Bridge-CC}
  Let $\kappa\geq 0$. The limit
  $\mu_{\bridge}(\kappa) = \lim_{n\to \infty}
  (b_{n}(\kappa))^{1/n}$ exists and is equal to
  $\sup_{n\geq 1} \big( b_{n}(\kappa) \big)^{1/n}$.
\end{proposition}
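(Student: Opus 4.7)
The plan is to apply Fekete's lemma for supermultiplicative sequences, so I need to verify that $b_{n}(\kappa) \in (0,\infty)$ for each $n \geq 1$ and that $b_{n+m}(\kappa) \geq b_{n}(\kappa) b_{m}(\kappa)$ for all $n,m \geq 1$. Positivity at $n = 1$ is witnessed by the single-step bridge $(o, e_{1})$, whose weight equals $\slb > 0$. Finiteness follows from the crude bound $\abs{\adj(\omega)} \leq (d-1)n$ for any $n$-step walk (each edge of $\Z^{d}$ is adjacent to exactly $2(d-1)$ others, with a factor of $2$ for double counting), which yields $b_{n}(\kappa) \leq c_{n}(\kappa) \leq (1+\kappa)^{(d-1)n} < \infty$.

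For the supermultiplicativity, I would take $\omega^{1} \in \bridge_{n}$ and $\omega^{2} \in \bridge_{m}$ and form $\eta = \omega^{1} \circ \omega^{2}$ using the concatenation of \Cref{sec:Symmetries}. The translation built into $\circ$ together with the bridge conditions~\eqref{eq:Bridge} give $\pi_{1}(\eta_{i}) \leq \pi_{1}(\omega^{1}_{n})$ for $0 \leq i \leq n$ and $\pi_{1}(\eta_{n+j}) > \pi_{1}(\omega^{1}_{n})$ for $1 \leq j \leq m$. These inequalities simultaneously ensure that $\eta$ is self-avoiding (the two pieces occupy disjoint ranges of the first coordinate apart from their shared vertex $\omega^{1}_{n}$) and that $\eta$ satisfies the bridge condition, so $\eta \in \bridge_{n+m}$. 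The map $\bridge_{n} \times \bridge_{m} \to \bridge_{n+m}$ sending $(\omega^{1},\omega^{2}) \mapsto \eta$ is injective for fixed $n,m$, since one recovers $\omega^{1} = \eta_{[0,n]}$ and $\omega^{2} = \trans_{-\eta_{n}}\eta_{[n,n+m]}$.

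The weight comparison rests on two elementary observations. First, the i.i.d.\ increment structure of $\P$ yields $\P_{n+m}(\eta) = \P_{n}(\omega^{1})\P_{m}(\omega^{2})$. Second, since $E(\eta) = E(\omega^{1}) \sqcup E(\omega^{2})$, the set $\adj(\eta)$ partitions disjointly as $\adj(\omega^{1}) \sqcup \adj(\omega^{2}) \sqcup \adj(\omega^{1},\omega^{2})$, with the last set containing the cross pairs with one edge in each piece. Because $\kappa \geq 0$, the cross factor satisfies $(1+\kappa)^{\abs{\adj(\omega^{1},\omega^{2})}} \geq 1$, and hence $\wt_{\kappa}(\eta) \geq \wt_{\kappa}(\omega^{1})\wt_{\kappa}(\omega^{2})$. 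Summing over $(\omega^{1},\omega^{2}) \in \bridge_{n} \times \bridge_{m}$ and invoking injectivity gives $b_{n+m}(\kappa) \geq b_{n}(\kappa)b_{m}(\kappa)$. Fekete's lemma applied to the subadditive sequence $-\log b_{n}(\kappa)$ then yields both the existence of the limit and its identification with the supremum.

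There is no substantive obstacle. The crux is that the very feature which destroys submultiplicativity of $c_{n}(\kappa)$ (cf.~\Cref{sec:lack-subm}) --- cross-adjacencies across a split point always contributing factors $\geq 1$ --- is exactly what makes the supermultiplicativity of $b_{n}(\kappa)$ immediate.
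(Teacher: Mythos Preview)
Your proof is correct and follows essentially the same approach as the paper: establish supermultiplicativity of $b_{n}(\kappa)$ via the observation that concatenation of bridges is a bridge and that cross-adjacencies only contribute factors $(1+\kappa)\geq 1$, then apply Fekete's lemma. You supply more detail than the paper (positivity, finiteness, injectivity of concatenation), but the argument is the same.
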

\begin{proof}
  The definition of a bridge implies that the concatenation
  $\omega^{1}\circ\omega^{2}$ of two bridges $\omega^{1}$ and
  $\omega^{2}$ is a bridge.
  Each pair of adjacent edges in $\omega^{i}$ remains adjacent in
  $\omega^{1}\circ\omega^{2}$. Any other pair of adjacent edges in the
  concatenation receives weight $1+\kappa\geq 1$, so
  \begin{equation*}
    \sum_{\omega^{1}\in\bridge_{n_{1}}}\sum_{\omega^{2}\in\bridge_{n_{2}}}
    \wt(\omega^{1})\wt(\omega^{2}) \leq
    \sum_{\eta\in \bridge_{n_{1}+n_{2}}}
    \wt(\eta)
    \indicatorthat{\eta=\omega^{1}\circ\omega^{2},\omega^{i}\in\bridge_{n_{i}}} 
  \end{equation*}
  for all choices of $n_{1},n_{2}\in\N$. 
  The left-hand side is $b_{n_{1}}(\kappa)b_{n_{2}}(\kappa)$. Ignoring
  the indicator on the right-hand side gives an upper bound
  $b_{n_{1}+n_{2}}(\kappa)$. Thus $b_{n}(\kappa)$ is
  supermultiplicative, and the proposition follows by Fekete's
  lemma. 
\end{proof}

\subsection{Unfolding I. Classical unfolding}
\label{sec:CU}

This section recalls how half-space walks can be unfolded into a
concatenation of bridges. We do this because a multivalued extension
of this procedure will be introduced in the next section. We omit the
proofs of the various facts that we recall; for details
see~\cite[Section~3.1]{MadrasSlade}.

\begin{definition}
  \label{def:Bridge-Point}
  Let $\omega\in\hsw$. The \emph{(first) bridge point}
  $\bp(\omega)$ of $\omega$ is the maximal index $i$ satisfying
  $\pi_{1}(\omega_{i}) = \max_{j}\pi_{1}(\omega_{j})$.
\end{definition}

\begin{definition}
  \label{def:span}
  The \emph{span} of a self-avoiding walk is
  \begin{equation}
    \label{eq:span}
    \spa(\omega) \bydef\max_{j}\pi_{1}(\omega_{j}) - \min_{j}\pi_{1}(\omega_{j}).
  \end{equation}
\end{definition}
Note that if $\omega$ is an $n$-step bridge then
$\spa(\omega)=\pi_{1}(\omega_{n})$.

Given a half-space walk $\omega\in\hsw_{n}$, let
$x\bydef-\omega_{\bp(\omega)}$ and define the \emph{initial bridge}
$\omega^{b} \bydef \omega_{\cb{0,\bp(\omega)}}$ and the
\emph{remainder}
$\omega^{h} \bydef\trans_{x}\,\omega_{\cb{\bp(\omega),n}}$.  We will
write $\omega=(\omega^{b},\omega^{h})$ in what follows to indicate
this decomposition into an initial bridge and a remainder. The
following properties of the decomposition are important.
\begin{enumerate}
\item $\omega = \omega^{b}\circ \omega^{h}$,
\item $\refl_{1}(\omega^{h})$ is a half-space walk,
\item $\spa(\omega^{b}) = \spa(\omega)$, and
\item $\spa(\refl_{1}(\omega^{h}))<\spa(\omega^{b})$, as $\omega$
  never revisits the coordinate hyperplane $\pi_{1}^{-1}(o)$ after
  $\omega_{0}$.
\end{enumerate}

\begin{definition}
  \label{def:CU}
  The \emph{classical unfolding map} $\Psi\colon \hsw\to \bridge$ is
  recursively defined as follows. $\Psi$ is the identity map on
  $\bridge$. Otherwise, if $\omega\in\hsw\setminus\bridge$, let
  $\omega = (\omega^{b},\omega^{h})$ and define
  $\Psi(\omega) = \omega^{b}\circ \Psi(\refl_{1}(\omega^{h}))$.
\end{definition}

In words, $\Psi$ reflects the remainder $\omega^{h}$ of the walk
$\omega$ through $\pi_{1}^{-1}(\omega_{\bp(\omega)})$, the affine
hyperplane with normal $e_{1}$ that contains the endpoint
$\omega_{\bp(\omega)}$ of~$\omega^{b}$. Since the reflection of
$\omega^{h}$ is itself a half-space walk, this procedure can be
iterated until the first bridge point of the newest half-space walk is
also the endpoint of the walk. The recursion terminates at some depth
$r=r(\omega)$ as the spans of the half-space walks produced are
strictly decreasing. See \Cref{fig:MVU-1,fig:MVU-2} for one step of
this procedure (the meaning of the shaded plaquettes will be explained
in the next section).

Thus, $\Psi$ produces a sequence of bridges $\omega^{b_{i}}$,
$i=1,\dots, r$, and
$\Psi(\omega)=\omega^{b_{1}}\circ \dots \circ \omega^{b_{r}}$. This
sequence of bridges is called the \emph{classical bridge
  decomposition} of $\omega$. In what follows, the bridges in the
classical bridge decomposition will always be denoted by
$\{\omega^{b_{i}}\}_{i=1}^{r}$. Let us record some properties of this
decomposition.
\begin{proposition}
  \label{prop:Classical-Unfolding}
    \leavevmode
  \begin{enumerate}
  \item $\spa(\Psi(\omega))=\sum_{i=1}^{r}\spa(\omega^{b_{i}})$, 
  \item the sequence $(\spa(\omega^{b_{i}}))_{i=1}^{r}$ of spans is
    strictly decreasing in $i$, and
  \item given $\Psi(\omega)$ and $\{\spa(\omega^{b_{i}})\}_{i=1}^{r}$,
    $\omega$ is uniquely determined.
  \end{enumerate}
\end{proposition}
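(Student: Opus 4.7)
The plan is to prove all three parts jointly by induction on the recursion depth $r(\omega)$ of the classical unfolding map $\Psi$. The base case $r(\omega)=1$ corresponds to $\omega\in\bridge$, where $\Psi(\omega)=\omega=\omega^{b_1}$ and each claim holds vacuously. For the inductive step, I would write $\omega=\omega^{b}\circ\omega^{h}$, set $\zeta\bydef\refl_{1}(\omega^{h})\in\hsw$, and apply the inductive hypothesis to $\zeta$; its recursion depth is $r(\omega)-1$ and its classical bridge decomposition is $(\omega^{b_{i}})_{i=2}^{r}$, consistent with $\omega^{b_{1}}=\omega^{b}$ and $\Psi(\omega)=\omega^{b_{1}}\circ\Psi(\zeta)$.

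For part (i), both $\omega^{b_{1}}$ and $\Psi(\zeta)$ are bridges, so $\Psi(\omega)$ is a bridge and its span equals $\pi_{1}$ of its endpoint, which telescopes as $\spa(\omega^{b_{1}})+\spa(\Psi(\zeta))=\sum_{i=1}^{r}\spa(\omega^{b_{i}})$ upon invoking the inductive hypothesis. For part (ii), properties (iii) and (iv) of the decomposition stated just before \Cref{def:CU} give
\begin{equation*}
\spa(\omega^{b_{2}})=\spa(\zeta)=\spa(\refl_{1}(\omega^{h}))<\spa(\omega^{b})=\spa(\omega^{b_{1}}),
\end{equation*}
and the inductive hypothesis applied to $\zeta$ yields strict monotonicity of the remaining spans.

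For part (iii), I would first recover the partition of $\Psi(\omega)$ into the subwalks $\omega^{b_{1}},\dots,\omega^{b_{r}}$ from the given data. Writing $s_{i}\bydef\sum_{j\leq i}\spa(\omega^{b_{j}})$, the key observation is that the last index of $\Psi(\omega)$ at which $\pi_{1}$ equals $s_{i}$ is precisely the endpoint of $\omega^{b_{i}}$. Indeed, by definition of bridge the endpoint of $\omega^{b_{i}}$ within its own range is the last index achieving $\pi_{1}=s_{i}$, while for each $j>i$ the bridge $\omega^{b_{j}}$ has its starting vertex at $\pi_{1}=s_{j-1}\geq s_{i}$ and the strict inequality $\pi_{1}(\omega^{b_{j}}_{0})<\pi_{1}(\omega^{b_{j}}_{k})$ for $k\geq 1$ forces $\pi_{1}>s_{i}$ on the remainder of $\omega^{b_{j}}$. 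Once the partition is recovered, $\Psi(\zeta)=\omega^{b_{2}}\circ\dots\circ\omega^{b_{r}}$ together with the spans $\{\spa(\omega^{b_{i}})\}_{i=2}^{r}$ determines $\zeta$ by the inductive hypothesis, and $\omega=\omega^{b_{1}}\circ\refl_{1}(\zeta)$ is therefore uniquely determined.

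The most delicate step is the partition-recovery argument in part (iii), as the spans alone (with no information about step counts) must locate the boundaries between consecutive $\omega^{b_{i}}$'s inside $\Psi(\omega)$; all other parts reduce to routine bookkeeping once the recursion is set up correctly.
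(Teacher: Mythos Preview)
Your proof is correct. Note, however, that the paper does not actually supply a proof of this proposition: it is stated as a standard fact from the Hammersley--Welsh argument, with a pointer to \cite[Section~3.1]{MadrasSlade}, and the paper only remarks that part~(iii) holds because ``knowing the lengths of the spans indicates the locations at which to fold the bridge $\Psi(\omega)$ in order to undo the unfolding procedure.'' Your inductive argument makes precise exactly this hint --- the partition-recovery step via the last index at which $\pi_{1}$ hits $s_{i}$ is the rigorous version of ``locating where to fold'' --- so your approach is the natural completion of what the paper leaves implicit rather than a genuinely different route.
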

Again we will not prove these claims, but let us remark that the third
property holds as knowing the lengths of the spans indicates the
locations at which to fold the bridge $\Psi(\omega)$ in order to undo
the unfolding procedure.

\subsection{Unfolding II. Multivalued unfolding}
\label{sec:MVU-II}

This section describes a multivalued extension of the classical
unfolding map. Roughly speaking, this multivalued extension quantifies
an entropic gain in unfolding a half-space walk $\omega$ into
bridges. The gain arises because the unfolded walk may have fewer
adjacent edges than the original walk $\omega$.

\begin{definition}
  \label{def:M-CU}
  The \emph{marked unfolding map $\bar\Phi$} is recursively defined on
  $\hsw$ by
  \begin{align}
    \label{eq:M-CU}
    \bar\Phi(\omega) 
    &\bydef 
     \ob{ (\omega^{b},\adj_{1}(\omega^{b},\omega^{h})),
      \bar\Phi(\refl_{1}(\omega^{h}))}, \\
    \label{eq:M-CU-2}
    \adj_{1}(\omega^{b},\omega^{h}) 
    &\bydef 
    \{P\in \adj(\omega^{b},\omega^{h}) \mid \textrm{$P$ flippable
      for $\omega^{b}$}\}.
  \end{align}
  Let
  $r\bydef r(\omega)$ denote the number of bridges generated by this
  recursion. The image of
  $\bar\Phi(\omega)$ will be denoted
  $((\omega^{b_{i}},\adj_{i}))_{i=1}^{r}$, where
  $\adj_{r}\bydef\emptyset$ and for $1\leq i<r$ we have used
    $\adj_{i}$ as shorthand for the set of plaquettes defined by
    \eqref{eq:M-CU-2} in the $i^{\text{th}}$ step of the recursion.
\end{definition}

The marked unfolding map is an extension of $\Psi$. It records the
plaquettes at which $\omega^{b_{i}}$ is flippable with respect to the
remainder of the half-space walk with initial bridge
$\omega^{b_{i}}$. Denote the set of discovered flippable plaquettes by
$\adj_{\bar\Phi}(\omega)$, i.e.,
\begin{equation*}
\adj_{\bar\Phi}(\omega) \bydef \bigsqcup_{i}\trans_{i}\,\adj_{i},
\end{equation*}
where $\trans_{i}$ is the translation that translates the bridge
$\omega^{b_{i}}$ to its location in the bridge $\Psi(\omega)$. 

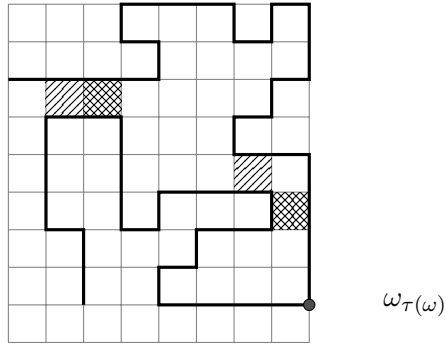
\begin{figure}
  \centering
  \begin{tikzpicture}[scale=.5]
    \phantom{$\omega_{\bp(\omega)}$}
    \draw[gray] (2,0) grid (10,9);
    \draw[black,very thick] 
    (2,7)
    -- (6,7) -- (6,8) -- (5,8) -- (5,9) -- (8,9) -- (8,8) -- (9,8) --
    (9,9) -- (10,9) -- (10,7) --(9,7) -- (9,6) -- (8,6) -- (8,5) --
    (10,5) -- (10,1);
    \draw[black,very thick] (10,1) -- (6,1) -- (6,2) -- (7,2) -- (7,3)
    -- (9,3) -- (9,4) -- (6,4) -- (6,3) -- (5,3) -- (5,6) -- (3,6) --
    (3,3) -- (4,3) -- (4,1);
    \fill[pattern = north east lines] (3,6) rectangle (5,7);
    \fill[pattern = north west lines] (4,6) rectangle (5,7);
    \fill[pattern = north east lines] (8,4) rectangle (9,5);
    \fill[pattern = north west lines] (9,3) rectangle (10,4);
    \fill[pattern = north east lines] (9,3) rectangle (10,4);
    \draw[black,fill = black!70] (10,1) circle (.15cm);
    \node at (10,1) [right] {$\omega_{\bp(\omega)}$};
  \end{tikzpicture}
  \caption{The figure depicts a half-space walk, along with its
    decomposition into $\omega^{b}$ and $\omega^{h}$. Shaded
    plaquettes are in $\adj(\omega^{b},\omega^{h})$; crosshatched
    plaquettes indicate a choice of subset of
    $\adj^{\star}(\omega^{b},\omega^{h})$.}
  \label{fig:MVU-1}
\end{figure}

For $k\in\N$, let $\hsw_{n}^{k}\subset \hsw_{n}$ be the set of
$n$-step half-space walks with $\abs{\adj_{\bar\Phi}(\omega)}=k$. To
define the multivalued extension of $\Psi$ on $\hsw_{n}^{k}$, we will
make use of the following facts. First, \Cref{lem:P-Fraction} implies
there exists a subset
$\adj^{\star}_{\bar\Phi}(\omega) \subset \adj_{\bar\Phi}(\omega)$ of
size $\ceil{\alpha k}$ such that the plaquettes in
$\adj^{\star}_{\bar\Phi}$ are pairwise disjoint. In what follows we
will assume the set $\adj^{\star}_{\bar\Phi}$ has been chosen
according to some arbitrary (but definite) procedure. Second,
\Cref{lem:Flipping-Disjoint} implies that if $B$ is a finite collection of
pairwise vertex-disjoint plaquettes and $x\in\Z^{d}$, then
$\flip_{B} = \prod_{P\in B}\flip_{P}$ is an unambiguously defined map
from $\saw(x)$ to $\saw(x)$.

\begin{definition}
  \label{def:MV-U}
  Let $0<\delta<\frac{1}{2}$, $n\in \N$, and $0\leq k\leq n$.  The
  \emph{multivalued unfolding map}
  $\Phi\colon \hsw^{k}_{n} \to 2^{\bridge_{n+2\ceil{\alpha \delta
        k}}}$ is defined by
  \begin{equation}
    \label{eq:MV-U}
    \Phi(\omega) \bydef \left\{\omega^{\prime} \, \Big| \, \omega^{\prime} =
    \flip_{B}(\Psi(\omega)),
    B\in {\adj^{\star}_{\bar\Phi}(\omega) \choose \ceil{\delta\alpha
        k}} \right\},
  \end{equation}
  where ${A\choose k}$ denotes the $k$-element subsets of a set
  $A$. 
\end{definition}

The next lemma gives the basic properties of $\Phi$; in particular it
verifies the stated codomain in the previous definition. To lighten
the notation, define
\begin{equation}
  \label{eq:ceils}
  \alpha_{k}\bydef\ceil{\alpha k}, \qquad \delta_{k}\bydef\ceil{\delta\alpha k}.
\end{equation}

\begin{lemma}
  \label{lem:Phi-Properties}
  Let $\omega\in\hsw^{k}_{n}$.
  Then 
  \begin{enumerate}
  \item Every $P\in \adj_{\bar\Phi}(\omega)$ is flippable for
    $\Psi(\omega)$.
  \item Each walk $\omega^{\prime}\in\Phi(\omega)$ is a bridge in
    $\bridge_{n+2\delta_{k}}$.
  \item The half-space walk $\omega$ can be reconstructed from any
    $\omega^{\prime}\in\Phi(\omega)$ given the spans
    $\spa(\omega^{b_{i}})$.
  \end{enumerate}
\end{lemma}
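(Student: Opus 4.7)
To prove (i), the plan is to show that each $P = \trans_i P_i$ with $P_i \in \adj_i$ is flippable for $\Psi(\omega)$. By definition of $\adj_i$, $P_i$ is flippable for $\omega^{b_i}$; it then remains to verify that no other sub-bridge of $\Psi(\omega)$ contributes additional vertices to $\trans_i P_i$. The key geometric observation is that in $\Psi(\omega)$-coordinates each sub-bridge $\omega^{b_j}$ lies in the slab $x_1 \in [L_j, L_{j+1}]$ with $L_j \bydef \sum_{l<j}\spa(\omega^{b_l})$, meeting adjacent slabs only at the shared bridge-point endpoints. Combined with the fact that $\trans_i P_i \subseteq [L_i, L_{i+1}]$ in the $x_1$-direction (inherited from $P_i \in \adj(\omega^{b_i}, \omega^{h_i})$ in step-$i$ coordinates), this ensures that no vertex of any $\omega^{b_j}$ with $j \ne i$ enters $\trans_i P_i$ beyond the endpoints of $\omega^{b_i}$ already counted.

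\textbf{Item (ii).} The length $|\omega'| = n + 2\delta_k$ is immediate, since each of the $\delta_k$ flips replaces one edge by three while classical unfolding preserves length. Self-avoidance of $\omega'$ follows from (i) together with \Cref{lem:Flipping-Disjoint}: the plaquettes in $B \subseteq \adj^\star_{\bar\Phi}(\omega)$ are pairwise disjoint by construction of $\adj^\star_{\bar\Phi}$ via \Cref{lem:P-Fraction} and flippable for $\Psi(\omega)$ by (i), so they may be flipped independently while preserving self-avoidance. For the bridge property, the only new intermediate vertices introduced are the two vertices of each $\trans_i P_i$ not on the removed edge; a refinement of the argument in (i) places these in $(L_i, L_{i+1}]$ and hence in $(0, L_{r+1}]$. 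The refinement uses the sub-claim that a plaquette in $\adj_i$ can contain neither the initial vertex of $\omega^{b_i}$ (at step-$i$-$x_1 = 0$) nor any vertex at step-$i$-$x_1 > \spa(\omega^{b_i})$; in either case, no edge of $P_i$ can lie in $\omega^{h_i}$ because $\omega^{h_i}$'s vertices are constrained to $(0, \spa(\omega^{b_i})]$ in step-$i$ coordinates, contradicting $P_i \in \adj(\omega^{b_i}, \omega^{h_i})$.

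\textbf{Item (iii).} The reconstruction of $\omega$ from $\omega'$ and the spans $(\spa(\omega^{b_i}))_{i=1}^r$ proceeds in three phases. First, the spans determine a decomposition $\omega' = \alpha_1 \circ \cdots \circ \alpha_r$ with each $\alpha_i$ the sub-walk of $\omega'$ spanning $[L_i, L_{i+1}]$ in the $x_1$-coordinate; this splitting is well-defined because by (ii) each $\alpha_i$ is a bridge, so $\omega'$ attains each boundary value $x_1 = L_i$ at a unique index. By construction of $\Phi$ one has $\alpha_i = \flip_{B_i}(\omega^{b_i})$ for some $B_i \subseteq \adj_i$. Second, the $\omega^{b_i}$ are recovered iteratively in reverse order $i = r, r-1, \ldots, 1$: the base case gives $\omega^{b_r} = \alpha_r$ since $\omega^{h_r} = \emptyset$ forces $\adj_r = \emptyset$; for $i < r$, having reconstructed $\omega^{b_{i+1}}, \ldots, \omega^{b_r}$, one recovers $\omega^{h_i}$ in step-$i$ coordinates by reflecting $\omega^{b_{i+1}} \circ \cdots \circ \omega^{b_r}$ through $\pi_1^{-1}(\spa(\omega^{b_i}))$, and then \Cref{lem:Flip-Invert} applied with $\omega = \omega^{b_i}$ and $\eta = \omega^{h_i}$ identifies $B_i \subseteq \adj(\omega^{b_i}, \omega^{h_i})$ and recovers $\omega^{b_i}$ from $\alpha_i$. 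Third, $\Psi(\omega) = \omega^{b_1} \circ \cdots \circ \omega^{b_r}$ is assembled and \Cref{prop:Classical-Unfolding}(iii) recovers $\omega$.

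\textbf{Main obstacle.} The main technical subtlety is the invocation of \Cref{lem:Flip-Invert} in (iii): the lemma is stated for $\eta \circ \omega \in \saw$, whereas the present setup provides $\omega^{b_i} \circ \omega^{h_i} \in \saw$, with the roles of $\eta$ and $\omega$ reversed. Inspection of the lemma's proof shows, however, that the essential inputs are (a) the disjointness of the edge sets of $\omega$ and $\eta$---which holds here since $\omega^{b_i} \circ \omega^{h_i}$ is self-avoiding---and (b) the combinatorial fact that for each $P \in B$ the edge of $P$ originally in $\eta$ reappears in $\flip_P(\omega)$ after the flip, which follows directly from the definitions. Both remain available in our setting, so the lemma's conclusion transfers. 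The remaining difficulty is the careful coordinate bookkeeping---tracking the changes between $\Psi(\omega)$-coordinates (where $\alpha_i$ naturally sits) and step-$i$-coordinates (where $\omega^{h_i}$ lives)---which is routine but requires attention.
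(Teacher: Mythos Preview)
Your proof is essentially correct and follows the same approach as the paper's: the slab decomposition for (i), the span-preserving bridge argument for (ii), and the reverse-order iterative reconstruction via \Cref{lem:Flip-Invert} for (iii) all mirror the paper's reasoning, with your version being more explicit about coordinates. Your identification of the role-reversal issue in applying \Cref{lem:Flip-Invert} is a genuine subtlety that the paper glosses over, and your resolution is sound.

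One minor imprecision: in Phase~1 of (iii) you justify the splitting of $\omega'$ by claiming ``$\omega'$ attains each boundary value $x_1 = L_i$ at a unique index.'' This is false as stated, since the bridge $\alpha_{i-1}$ may attain its maximum $x_1 = L_i$ several times before terminating there. The splitting is nevertheless well-defined: the concatenation point between $\alpha_{i-1}$ and $\alpha_i$ is the \emph{last} index $j$ at which $\pi_1(\omega'_j) \leq L_i$, since every subsequent vertex lies in $\alpha_i \circ \cdots \circ \alpha_r$ and hence has $x_1 > L_i$ by the bridge property. This is a wording issue, not a gap in the argument.
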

\begin{proof}
  Recall $\omega = \omega^{b_{1}}\circ\omega^{h}$. We begin by
    noting some properties of the plaquettes in
    $\adj(\omega^{b_{1}},\omega^{h})$. Since $\omega$ is a half-space
  walk, no plaquette in $\adj(\omega^{b_{1}},\omega^{h})$ contains
  vertices in the half-space
  $\pi_{1}^{-1}(\oc{-\infty,0})$. Similarly, no plaquette contains
  vertices in the half-space
  $\pi_{1}^{-1}(\co{\spa(\omega^{b_{1}})+1,\infty})$, as $\omega^{h}$
  is contained in $\pi_{1}^{-1}(\cb{1,\spa(\omega^{b_{1}})})$.

  We first prove (i). A moment of thought shows that each plaquette in
  $B\subset \adj_{1}(\omega^{b_{1}},\omega^{h})\subset
  \adj^{\star}_{\bar\Phi}(\omega)$ is flippable for
  $\omega^{b_{1}}\circ \refl_{1}(\omega^{h})$.  Iterating this
  argument for each bridge in the bridge decomposition of $\omega$
  implies each plaquette in $\adj_{\bar\Phi}(\omega)$ is flippable for
  $\Psi(\omega)$.

  We next prove (ii). The preceding shows each
  $\omega'\in\Phi(\omega)$ is given by
  $\omega'=\flip_{B}(\omega) =
  \flip_{B}(\omega^{b_{1}})\circ\dots\circ \flip_{B}(\omega^{b_{r}})$
  for $B\subset \adj_{\bar\Phi}^{\star}(\omega)$. This is because each
  plaquette in $B$ contains exactly one edge of $\Psi(\omega)$, and
  this edge is located in exactly one subwalk. By
  \Cref{lem:Flipping-Disjoint} and the first paragraph of the proof, each
  $\omega^{j} = \flip_{B}(\omega^{b_{j}})$ is a bridge with span
  $\spa(\omega^{b_{j}})$ for $j=1,\dots, r$. As a concatenation of
  bridges is a bridge and each flip adds exactly two edges, this
  completes the proof, as each $\omega'$ results from applying
  $\flip_{B}$ with $\abs{B}=\delta_{k}$.

  Lastly we prove (iii). By construction, the last bridge
  $\omega^{b_{r}}$ in the classical bridge decomposition has no flips
  applied to it in the formation of $\omega^{r}$. Given
  $\spa(\omega^{j})=\spa(\omega^{b_{j}})$ for each $j$,
  $\omega^{r}=\omega^{b_{r}}$ is determined. Hence, by
  \Cref{lem:Flip-Invert}, $\omega^{b_{r-1}}$ can be reconstructed.
  Iterating this procedure reconstructs $\omega$ from $\omega'$,
  establishing (iii).
\end{proof}

\begin{figure}
  \centering
    \begin{tikzpicture}[scale=.5]
    \draw[gray] (2,0) grid (18,9);
    \draw[black,very thick] 
    (2,7) -- (4,7) -- (4,6) -- (5,6) -- (5,7)
    -- (6,7) -- (6,8) -- (5,8) -- (5,9) -- (8,9) -- (8,8) -- (9,8) --
    (9,9) -- (10,9) -- (10,7) --(9,7) -- (9,6) -- (8,6) -- (8,5) --
    (10,5) -- (10,4) -- (9,4) -- (9,3) -- (10,3) -- (10,1);
    \draw[black,very thick] (10,1) -- (14,1) -- (14,2) -- (13,2) --
    (13,3) -- (11,3) -- (11,4) -- (14,4) -- (14,3) -- (15,3) -- (15,
    6) -- (17,6) -- (17,3) -- (16,3) -- (16,1);
    \fill[pattern = north east lines] (4,6) rectangle (5,7);
    \fill[pattern = north west lines] (4,6) rectangle (5,7);
    \fill[pattern = north west lines] (9,3) rectangle (10,4);
    \fill[pattern = north east lines] (9,3) rectangle (10,4);
    \draw[black,fill = black!70] (10,1) circle (.15cm);
    \node at (10,1) [below] {$\omega_{\bp(\omega)}$};
  \end{tikzpicture}
  \caption{The figure depicts the image in $\Phi(\omega)$ of the
    half-space walk $\omega$ depicted in \Cref{fig:MVU-2}, when the
    subset $B$ of plaquettes at which flips occur is the set of
    crosshatched plaquettes.}
  \label{fig:MVU-2}
\end{figure}

Having established the basic properties of the multivalued unfolding
map, we turn to estimating the weight of $n$-step half-space walks in
terms of the weights of bridges. The next lemma gives the basic
relation between these objects. 

For $n\in\N$, let $P(n)$ denote the number of partitions of $n$ into
distinct natural numbers. Let $\bridge_{n}(\ell)$ denote the set of
$n$-step bridges $\omega$ with $\spa(\omega)\leq \ell$, and
recall the definition of $\tflipcost$ from \eqref{eq:tflipcost}.
\begin{proposition}
  \label{lem:MUV-OS-kp}
  Consider SAW on $\Z^{d}$. For $n,k\in\N$ and $0\leq k\leq n$,
  \begin{equation}
    \label{eq:MUV-OS-kp}
    \sum_{\omega\in\hsw_{n}^{k}}\wt_{\kappa}(\omega)
    \leq
    P(n){\alpha_{k}\choose\delta_{k}}^{-1} \tflipcost^{\delta_{k}}
    \sum_{\omega'\in\bridge_{n+2\delta_{k}}(n)}(1+\kappa)^{k+3d(d-1)\sqrt{n}}
    \wt_{\kappa}(\omega') .
  \end{equation}
\end{proposition}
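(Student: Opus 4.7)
The plan is to apply the multivalued map principle to $\Phi\colon \hsw_n^k \to 2^{\bridge_{n+2\delta_k}(n)}$ from \Cref{def:MV-U}. Three ingredients are needed: the size of $\Phi(\omega)$, a weight estimate relating $\wt_\kappa(\omega)$ to $\wt_\kappa(\omega')$ for $\omega'\in\Phi(\omega)$, and a bound on the multiplicity of preimages. The first is immediate: distinct $\delta_k$-element subsets of the $\alpha_k$ pairwise-disjoint plaquettes in $\adj^{\star}_{\bar\Phi}(\omega)$ yield distinct flipped bridges (by invertibility of $\flip$ at disjoint plaquettes, \Cref{lem:Flipping-Disjoint}), so $|\Phi(\omega)| = \binom{\alpha_k}{\delta_k}$.

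For the weight estimate I would decompose $\omega\mapsto\omega'$ into the classical unfolding $\omega\mapsto \Psi(\omega)$ and the flipping $\Psi(\omega)\mapsto \flip_B(\Psi(\omega))=\omega'$. Because the reflections composing $\Psi$ are $\Z^d$-symmetries and $D$ is invariant under these, $\P_n(\omega)=\P_n(\Psi(\omega))$, so $\wt_\kappa(\omega)/\wt_\kappa(\Psi(\omega)) = (1+\kappa)^{|\adj(\omega)|-|\adj(\Psi(\omega))|}$. Internal adjacencies within each classical bridge block are preserved by reflection, so only cross-adjacencies between blocks can be lost. Tracking the unfolding iteratively, at step $i$ the reflection of $\omega^{h,i}$ destroys at most $|\adj_i|+\nrp$ cross-adjacencies: the $|\adj_i|$ comes directly from \Cref{def:M-CU}, while \Cref{lem:Adjacent-Flippable} (applied to the reversed concatenation so that ``flippable'' refers to $\omega^{b_i}$) bounds the non-flippable contribution by $\nrp$. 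Summing over $i$, using $\sum_i|\adj_i|=k$ and the strictly decreasing span sequence of \Cref{prop:Classical-Unfolding}(ii) which forces $r(r+1)/2\leq n$ and hence $r \leq \sqrt{2n} \leq \tfrac{3}{2}\sqrt{n}$, produces $|\adj(\omega)|-|\adj(\Psi(\omega))|\leq k + 3d(d-1)\sqrt{n}$ (with $\nrp=2d(d-1)$ and $2\sqrt{2}<3$). For the flipping stage, $\omega'$ is obtained by $\delta_k$ flips at disjoint flippable plaquettes; disjointness guarantees the remaining plaquettes stay flippable after each flip, so iterating \Cref{lem:TFC-Internal} yields $\wt_\kappa(\Psi(\omega)) \leq \tflipcost^{\delta_k} \wt_\kappa(\omega')$. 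Combined, $\wt_\kappa(\omega)\leq (1+\kappa)^{k+3d(d-1)\sqrt{n}}\tflipcost^{\delta_k}\wt_\kappa(\omega')$.

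For the multiplicity, \Cref{lem:Phi-Properties}(iii) says $\omega$ is determined by $\omega'$ together with the span sequence $(\spa(\omega^{b_i}))_{i=1}^{r}$, after which $B$ is determined by the plaquettes where $\omega'$ and $\Psi(\omega)$ disagree. Since these spans are strictly decreasing positive integers summing to $\spa(\omega')\leq n$, the number of such sequences is at most $P(\spa(\omega'))\leq P(n)$. Assembling via the double-counting identity
\begin{equation*}
  \binom{\alpha_k}{\delta_k}\sum_{\omega\in\hsw_n^k}\wt_\kappa(\omega)
  = \sum_{\omega\in\hsw_n^k}\sum_{\omega'\in\Phi(\omega)}\wt_\kappa(\omega),
\end{equation*}
substituting the weight bound term-by-term, and swapping the order of summation produces the claimed inequality \eqref{eq:MUV-OS-kp}.

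The principal obstacle is the adjacency bookkeeping in the first stage. One must verify that the per-step loss is really controlled by $|\adj_i|+\nrp$ despite the fact that reflecting $\omega^{h,i}$ could in principle also disturb cross-adjacencies with earlier bridges $\omega^{b_j}$, $j<i$; the geometric structure imposed by strictly decreasing spans (in particular, the strip-like separation of the unfolded bridges in the $\pi_1$ direction) is what rules this out or routes the contributions through an earlier iteration, so the summed bound genuinely telescopes to $k+3d(d-1)\sqrt{n}$.
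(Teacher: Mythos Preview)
Your proof is correct and follows the same route as the paper: compare $\wt_\kappa(\omega)$ to $\wt_\kappa(\Psi(\omega))$ via the adjacency loss in unfolding, then $\wt_\kappa(\Psi(\omega))$ to $\wt_\kappa(\omega')$ via \Cref{lem:TFC-Internal}, and finish with the multivalued map principle using $|\Phi(\omega)|=\binom{\alpha_k}{\delta_k}$ and $|\Phi^{-1}(\omega')|\leq P(n)$.

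The only point worth commenting on is the ``principal obstacle'' you flag at the end. You organize the adjacency bookkeeping iteratively, tracking the loss at each successive reflection, and then worry that reflecting $\omega^{h,i}$ might disturb cross-adjacencies with earlier $\omega^{b_j}$. That worry is legitimate for the iterative formulation (such cross-adjacencies can indeed exist, since $\omega^{b_1}$ can have edges in the hyperplane $\pi_1=\spa(\omega^{b_1})$ parallel to edges of $\omega^{h,2}$ in $\pi_1=\spa(\omega^{b_1})+1$), but it disappears if you organize the count directly rather than through the partially-unfolded intermediates. Since reflection preserves internal adjacencies, one has the \emph{exact} recursive identity
\[
  |\adj(\omega)| \;=\; |\adj(\omega^{b_1})| + |\adj(\omega^{h,1})| + |\adj(\omega^{b_1},\omega^{h,1})|
  \;=\; \cdots \;=\; \sum_{i=1}^{r}|\adj(\omega^{b_i})| + \sum_{i=1}^{r-1}|\adj(\omega^{b_i},\omega^{h,i})|,
\]
obtained by repeatedly expanding $|\adj(\omega^{h,i})|=|\adj(\refl_1(\omega^{h,i}))|$. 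On the other side, simply discard any cross-adjacencies in the final bridge: $|\adj(\Psi(\omega))|\geq \sum_i|\adj(\omega^{b_i})|$. Subtracting gives
\[
  |\adj(\omega)|-|\adj(\Psi(\omega))| \;\leq\; \sum_{i=1}^{r-1}|\adj(\omega^{b_i},\omega^{h,i})|
  \;\leq\; \sum_{i=1}^{r-1}\bigl(|\adj_i|+\nrp\bigr) \;=\; k + (r-1)\nrp,
\]
with no intermediate configurations entering. This is what the paper's (terse) argument amounts to, and it removes the need for the geometric strip-separation argument you sketch.
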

\begin{proof}
  We begin by comparing the weight of $\omega\in\hsw_{n}^{k}$ to the
  weight of a generic $\omega'\in\Phi(\omega)$. Recall that
  $\nrp=2d(d-1)$, and that $r=r(\omega)$ is the number of bridges
  created by the unfolding map.
  \begin{enumerate}
  \item At each unfolding, there are at most $\nrp$ plaquettes
      that are not flippable for $\omega^{b}$ in
    $\adj(\omega^{b},\omega^{h})$ by
    \Cref{lem:Adjacent-Flippable}. Hence,
    \begin{equation*}
      \wt_{\kappa}(\omega)\leq
      (1+\kappa)^{k+r\nrp}\wt_{\kappa}(\omega^{b_{1}}\circ \dots \circ
      \omega^{b_{r}}),
    \end{equation*}
    as there are $r$ unfolding steps.
  \item As
    $\omega'=\flip_{B}(\omega^{b_{1}}\circ\dots\circ\omega^{b_{r}})$
    and $\abs{B}=\delta_{k}$, applying \Cref{lem:TFC-Internal}
    $\delta_{k}$ times implies
    \begin{equation*}
      \wt_{\kappa}(\omega^{b_{1}}\circ\dots\circ\omega^{b_{r}})
      \leq
      \tflipcost^{\delta_{k}} 
      \wt_{\kappa}(\omega').
    \end{equation*}
  \end{enumerate}
  This implies
  \begin{equation}
    \label{eq:MUV-OS-k1}
    \wt_{\kappa}(\omega)\leq
    (1+\kappa)^{k+r(\omega)\nrp}\tflipcost^{\delta_{k}}\wt_{\kappa}(\omega').
  \end{equation}
  
  Next we apply the multivalued map principle. Let
    $\Phi(H_{n}^{k})=\cup_{\omega\in H_{n}^{k}}\Phi(\omega)$.
  \begin{align}
    \label{eq:MUV-OS-k2}
    \sum_{\omega\in\hsw_{n}^{k}}\wt_{\kappa}(\omega) \abs{\Phi(\omega)} 
    &=  \sum_{\omega\in\hsw_{n}^{k}} \sum_{\omega'\in\Phi(\omega)} \wt_{\kappa}(\omega)
    \\
    &=
    \sum_{\omega'\in\Phi(\hsw_{n}^{k})}
      \sum_{\omega\in\Phi^{-1}(\omega')} \wt_{\kappa}(\omega) \\
    &\leq \sum_{\omega'\in\Phi(\hsw_{n}^{k})}
      \sum_{\omega\in\Phi^{-1}(\omega')}
      (1+\kappa)^{k+r(\omega)\nrp}\tflipcost^{\delta_{k}}\wt_{\kappa}(\omega').
  \end{align}
  where the inequality is by~\eqref{eq:MUV-OS-k1}. To make the
  inner sum uniform in $\omega$, we use
  \Cref{prop:Classical-Unfolding}. The spans of the bridges in the
  bridge decomposition of $\omega$ are distinct positive integers
  summing to $\spa(\Psi(\omega))\leq n$. This implies that $r(\omega)$
  is at most $\frac{3}{2}\sqrt{n}$, as the sum of the first
  $\frac{3}{2}\sqrt{n}$ positive integers exceeds $n$. Hence
  \begin{equation}
    \label{eq:MUV-OS-k3}
    \sum_{\omega\in\hsw_{n}^{k}}\abs{\Phi(\omega)} \wt_{\kappa}(\omega)
    \leq 
    \sum_{\omega'\in\Phi(\hsw_{n}^{k})} \abs{\Phi^{-1}(\omega')}
    (1+\kappa)^{k+\frac{3}{2}\sqrt{n}\nrp}\tflipcost^{\delta_{k}}\wt_{\kappa}(\omega').
  \end{equation}
  Next we estimate $\abs{\Phi(\omega)}$ and
  $\abs{\Phi^{-1}(\omega')}$.
  \begin{enumerate}
    \item \Cref{lem:Phi-Properties} implies that
      \begin{equation}
        \label{eq:Phi-Card}
        \abs{\Phi(\omega)} = {\alpha_{k}\choose \delta_{k}}.
      \end{equation}
      Note that this is the number of subsets $B$ of
      $\adj_{\bar\Phi}^{\star}(\omega)$ of size $\delta_{k}$. The
      claim is true as (i) all plaquettes in $B$ are flippable for
      $\Psi(\omega)$, and (ii) every distinct choice of a subset $B$
      in the definition of $\Phi$ results in a distinct image.
    \item By \Cref{lem:Phi-Properties}~(iii) we can reconstruct
      $\omega$ from an image $\omega'\in\Phi(\omega)$ given the
      sequence of spans in the classical bridge decomposition of
      $\omega$. The maximal possible span of $\Psi(\omega)$, the
      bridge produced by the classical bridge decomposition, is $n$,
      and by \Cref{prop:Classical-Unfolding} the spans form a
      partition of $\spa(\Psi(\omega))$. Thus the number of preimages
      $\abs{\Phi^{-1}(\omega')}$ is at most $P(n)$, the number of
      partitions of $n$.
  \end{enumerate}
  This establishes \eqref{eq:MUV-OS-kp} if the index set
  $\bridge_{n+2\delta_{k}}(n)$ of the sum on the right-hand side is
  replaced with $\Phi(\hsw_{n}^{k})$. By
  \Cref{lem:Phi-Properties}~(ii),
  $\Phi(\hsw_{n}^{k})\subset \bridge_{n+2\delta_{k}}(n)$, as
  $\spa(\omega^{\prime}) = \spa(\Psi(\omega))\leq n$. The proposition
  thus follows as each summand is non-negative.
\end{proof}

\begin{lemma}
  \label{lem:Decay}
  For $\kappa$ sufficiently small, there are $\delta>0$,
  $K>0$, and $\con>0$ such that
  \begin{equation}
    \label{eq:MUV-Bound-1}
    {\alpha_{k}\choose \delta_{k}}^{-1}
    (\tflipcost \mu_{\bridge}^{2})^{\delta_{k}}
    (1+\kappa)^{k} \leq Ke^{-\con k}
  \end{equation}
  for all $k\in\N$.
\end{lemma}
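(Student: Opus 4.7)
The plan is to show that the binomial entropic factor $\binom{\alpha_k}{\delta_k}^{-1}$, which behaves roughly like $\delta^{\delta\alpha k}$ for small $\delta$, dominates the combined exponential energetic cost $(\tflipcost \mu_{\bridge}^{2})^{\delta_k}(1+\kappa)^{k}$ once both $\delta$ and $\kappa$ are chosen sufficiently small. The parameters are chosen by first picking $\kappa_0$ in a fixed bounded range, then $\delta$ depending on $d$ and $\slb$, and finally shrinking $\kappa_0$ further.

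Before tuning parameters, I would obtain an a priori upper bound on $\mu_{\bridge}(\kappa)$ that does not rely on the (as-yet-unproved) existence of $\mu(\kappa)$. Since every edge of $\Z^{d}$ lies in $2(d-1)$ plaquettes and each plaquette contains four edges, one has $\abs{\adj(\omega)} \leq c_d \abs{\omega}$ for some $c_d = c_d(d)$ uniformly over self-avoiding $\omega$. Hence $b_n(\kappa) \leq c_n(\kappa) \leq (1+\kappa)^{c_d n}$, and by \Cref{prop:Bridge-CC}, $\mu_{\bridge}(\kappa) \leq (1+\kappa)^{c_d}$. Consequently, for $\kappa \in [0,1]$, the quantity $M\bydef \tflipcost\,\mu_{\bridge}(\kappa)^{2} = \slb^{-2}(1+\kappa)^{2d-4}\mu_{\bridge}(\kappa)^{2}$ is bounded by some $M_{0}=M_{0}(d,\slb)\in [1,\infty)$.

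Next I would estimate the binomial coefficient. The standard inequality $\binom{n}{j}\geq (n/j)^{j}$ (valid for $1\leq j\leq n$) gives $\binom{\alpha_k}{\delta_k}^{-1}\leq(\delta_k/\alpha_k)^{\delta_k}$. Using $\delta_k=\lceil \delta\alpha k\rceil$ and $\alpha_k=\lceil \alpha k\rceil$, one has $\delta_k/\alpha_k\leq 2\delta$ for every $k$ larger than some threshold $k_\star=k_\star(\alpha,\delta)$, so for $\delta\in(0,1/2)$
\begin{equation*}
\binom{\alpha_k}{\delta_k}^{-1}\leq (2\delta)^{\delta_k}\leq (2\delta)^{\delta\alpha k}\qquad (k\geq k_\star),
\end{equation*}
with the finitely many $k<k_\star$ absorbed into the prefactor $K$. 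Now choose $\delta=\delta(d,\slb)\in(0,1/2)$ so small that $2\delta M_{0}\leq e^{-1}$; such a $\delta$ exists because $M_{0}$ is a fixed constant. Using $\delta_k-\delta\alpha k\in[0,1)$ to extract a harmless multiplicative constant bounded by $M_{0}$, one obtains
\begin{equation*}
\binom{\alpha_k}{\delta_k}^{-1}(\tflipcost\mu_{\bridge}^{2})^{\delta_k}(1+\kappa)^{k}\leq M_{0}\bigl[(2\delta M_{0})^{\delta\alpha}(1+\kappa)\bigr]^{k}\leq M_{0}\bigl[e^{-\delta\alpha}(1+\kappa)\bigr]^{k}.
\end{equation*}
Finally, shrink $\kappa_0$ so that $e^{-\delta\alpha}(1+\kappa_0)\leq e^{-\delta\alpha/2}$; the inequality then holds with $\con=\delta\alpha/2$ and with $K$ chosen large enough to accommodate the small values of $k$.

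The main obstacle is really just avoiding circularity in Step 1: one cannot use $\mu(\kappa)$ since its existence is the object of \Cref{thm:CC-Exists-Intro}, so the deterministic count $\abs{\adj(\omega)}\leq c_d\abs{\omega}$ is essential. The rest is parameter tuning, whose key feature is that the entropic exponential rate $(2\delta)^{\delta\alpha}$ can be made \emph{arbitrarily close to $0$} as $\delta\to 0$, while the energetic factor $(1+\kappa)$ can be made arbitrarily close to $1$ by shrinking $\kappa$, so the former always eventually beats the latter.
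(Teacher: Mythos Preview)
Your proof is correct and follows essentially the same approach as the paper: both use the elementary binomial lower bound $\binom{n}{j}\geq(n/j)^{j}$ to get $\binom{\alpha_{k}}{\delta_{k}}^{-1}\leq(2\delta)^{\delta\alpha k}$ for large $k$, use the a~priori bound $\mu_{\bridge}(\kappa)\leq(1+\kappa)^{c_{d}}$ from \Cref{prop:Bridge-CC} to control $\tflipcost\mu_{\bridge}^{2}$ uniformly in small $\kappa$, and then choose first $\delta$ and then $\kappa_{0}$ so that the base of the exponential is strictly less than one. The only difference is bookkeeping: the paper packs everything into a bracket raised to the $\delta\alpha k$ power and invokes continuity at $\kappa=0$, whereas you isolate the factor $(1+\kappa)^{k}$ and make the two-step parameter choice explicit.
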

\begin{proof}
  We will show that for $0<\delta<\frac{1}{2}$ small enough there is a
  $\kappa'$ such that for $\kappa\in\cb{0,\kappa'}$, there are $\con,K>0$
  such that~\eqref{eq:MUV-Bound-1} holds. This implies the statement
  of the lemma. Since it suffices to prove the validity
  of~\eqref{eq:MUV-Bound-1} for $k>k'\bydef(\delta\alpha)^{-1}$, we will
  restrict attention to such $k$.
  
  We begin by estimating the combinatorial prefactor
  in~\eqref{eq:MUV-Bound-1}. Recall the definition of $\alpha_{k}$ and
  $\delta_{k}$ in~\eqref{eq:ceils}. By using (i) 
  ${n\choose k}\geq (n/k)^{k}$ for $1\leq k\leq n$, (ii) $\max\{1,x\}
  \leq \ceil{x}\leq x+1$ for $x>0$, and (iii) $k>k'$ and
  $\delta<\frac{1}{2}$, we obtain
  \begin{equation}
    \label{eq:MUV-Bound-2}
    {\alpha_{k}\choose \delta_{k}}^{-1}  \leq
    \ob{\frac{\delta_{k}}{\ceil{\alpha k}}}^{\delta_{k}} 
       \leq (\delta + (\alpha k)^{-1})^{\delta_{k}}
          \leq (2\delta)^{\delta\alpha k}.
  \end{equation}
  Thus, when $k>k'$, the left-hand side of~\eqref{eq:MUV-Bound-1} is
  bounded above by
  \begin{equation}
    \label{eq:MUV-Bound-2.1}
    \left[2\delta(\tflipcost \mu^{2}_{\mathrm{\bridge}})^{\frac{\ceil{\delta\alpha
            k}}{\delta\alpha k}}
      (1+\kappa)^{(\delta\alpha)^{-1}}\right]^{\delta\alpha k},
  \end{equation}
  and the claim will follow  by showing that the quantity in square
  brackets is strictly less than one.

  By \Cref{prop:Bridge-CC},
  $\mu_{\bridge}(\kappa) = \sup_{n} \big( b_n (\kappa) \big)^{1/n}$,
  and this latter quantity is at most
  $\sup_{n} \big( c_{n}(\kappa) \big)^{1/n}$.  This, in turn, is
  bounded above by $(1+\kappa)^{2d-2}$, as each edge in a
  self-avoiding walk can be adjacent to at most $2d-2$ others. Since
  $\delta\alpha k>1$ when $k>k'$, we can bound above the bracketed
  quantity of~\eqref{eq:MUV-Bound-2.1} by
  \begin{equation}
    \label{eq:MUV-Bound-2.2}
   2\delta(\tflipcost (1+\kappa)^{4(d-1)})^{2}
    (1+\kappa)^{(\delta\alpha)^{-1}}.
  \end{equation}
  Recalling the definition~\eqref{eq:tflipcost} of $\tflipcost$, it follows that when
  $\kappa=0$ the quantity in~\eqref{eq:MUV-Bound-2.2}
  is strictly less than one, for $\delta=\delta(\slb)$ sufficiently
  small. Since the expression in~\eqref{eq:MUV-Bound-2.2} is
  continuous in $\kappa$ for $\delta$ fixed, it is strictly less than
  one for small positive $\kappa$. This proves the claim.
\end{proof}

The next theorem is a consequence of a stronger result due to Hardy
and Ramanujan; it will be needed to estimate the mass of $n$-step
half-space walks.
\begin{theorem}[Hardy--Ramanujan~\cite{HR}]
  \label{thm:Partition-Count}
  Let $P(n)$ denote the number of partitions of $n$ into distinct
  parts. Then
  \begin{equation}
    \label{eq:Partition-Count}
    \log P(n) \sim \pi\sqrt{\frac{n}{3}},\qquad n\to\infty,
  \end{equation}
  meaning that the ratio of the two sides tends to $1$ as $n\to\infty$.
\end{theorem}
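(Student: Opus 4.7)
The plan is to obtain the asymptotic for $\log P(n)$ via generating functions and a saddle-point style argument, following the original approach of Hardy and Ramanujan. Let
\[
F(q) \bydef \sum_{n \geq 0} P(n) q^{n} = \prod_{k \geq 1}\ob{1+q^{k}}, \qquad |q|<1,
\]
the generating function for partitions into distinct parts; the product form follows by expanding each factor as $1+q^{k}$ and tracking which parts $k$ are used.

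The first step is to determine the behaviour of $\log F(e^{-t})$ as $t \to 0^{+}$. Expanding $\log(1+x) = \sum_{m \geq 1}(-1)^{m+1}x^{m}/m$ gives
\[
\log F(e^{-t})
= \sum_{k,m \geq 1}\frac{(-1)^{m+1}}{m}e^{-kmt}
= \sum_{m \geq 1}\frac{(-1)^{m+1}}{m}\cdot \frac{e^{-mt}}{1-e^{-mt}}.
\]
As $t \to 0^{+}$, the factor $e^{-mt}/(1-e^{-mt})$ is well-approximated by $(mt)^{-1}$, so the leading term is
\[
\log F(e^{-t}) \sim \frac{1}{t}\sum_{m \geq 1}\frac{(-1)^{m+1}}{m^{2}} = \frac{\pi^{2}}{12 t}.
\]
(The error can be controlled using the Mellin transform of $x/(e^{x}-1)$ and the functional equation of $\zeta$, or just by a direct comparison with the integral $\int_{0}^{\infty}e^{-u}/(1-e^{-u})\,du$.)

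The second step is to transfer this asymptotic to $P(n)$. Because $P(n)$ is monotone and $F(q)$ has positive Taylor coefficients, an elementary Tauberian inequality (or, equivalently, saddle-point bounds on the contour integral $P(n) = (2\pi i)^{-1}\oint F(q)\,q^{-n-1}\,dq$) yields both an upper and a matching lower bound. For the upper bound, the Markov-type estimate
\[
P(n) \leq e^{nt} F(e^{-t})
\]
valid for any $t>0$, combined with the leading asymptotic of $\log F(e^{-t})$, is optimized by choosing $t = t_{n}$ to minimize $nt + \pi^{2}/(12t)$, namely $t_{n} = \pi/\sqrt{12 n}$, giving
\[
\log P(n) \leq n t_{n} + \frac{\pi^{2}}{12 t_{n}}(1+o(1)) = 2\sqrt{n\cdot \pi^{2}/12}\,(1+o(1)) = \pi\sqrt{n/3}\,(1+o(1)).
\]
The matching lower bound is the core technical point: one shows that the probability measure on $\N$ with mass proportional to $P(n)e^{-nt_{n}}$ concentrates on an interval of length $O(n^{3/4})$ around its mean (which is $\sim n$), using a second-moment calculation from the generating function's second logarithmic derivative at $e^{-t_{n}}$. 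This supplies $P(n)e^{-nt_{n}} \geq c n^{-3/4} F(e^{-t_{n}})$, and taking logarithms yields the reverse asymptotic.

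The main obstacle is this lower bound: the upper bound and the asymptotic $\log F(e^{-t}) \sim \pi^{2}/(12 t)$ are both quite routine, but to invert the Laplace/generating-function relation one needs concentration of the associated probability measure on integers, which requires either a careful saddle-point contour analysis or an appeal to a Tauberian theorem of Hardy-Ramanujan-Ingham type. Since this is a classical result, the cleanest exposition is simply to cite \cite{HR}; we record the plan above only to indicate the essential ingredients.
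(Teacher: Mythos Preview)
Your sketch is correct and follows the classical Hardy--Ramanujan approach: the generating function $\prod_{k\geq 1}(1+q^{k})$, the asymptotic $\log F(e^{-t})\sim \pi^{2}/(12t)$ via $\sum_{m\geq 1}(-1)^{m+1}/m^{2}=\pi^{2}/12$, and the saddle-point optimisation at $t_{n}=\pi/\sqrt{12n}$ all check out and yield $\pi\sqrt{n/3}$.

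The paper, however, does not prove this statement at all. It is recorded as a named classical theorem with a citation to~\cite{HR} and then applied as a black box in the proof of \Cref{prop:MVU-Bound}, where only the consequence $P(n)\leq e^{c\sqrt{n}}$ for some constant $c$ is actually needed. So your proposal goes well beyond what the paper does; you correctly anticipated this at the end of your write-up. If anything, for the paper's purposes even your upper-bound half (the Chernoff-type inequality $P(n)\leq e^{nt}F(e^{-t})$ optimised in $t$) would suffice, and the more delicate Tauberian lower bound is not required.
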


\begin{proposition}
  \label{prop:MVU-Bound}
  Consider SAW on$\Z^{d}$. For $\kappa$ sufficiently small, there are
  $\con_{1},K_{1}>0$ such that
  \begin{equation}
    \label{eq:MVU-Bound}
    h_{n}(\kappa) = \sum_{\omega\in\hsw_{n}} \wt_{\kappa}(\omega)
    \leq  K_{1}e^{\con_{1}\sqrt{n}} \mu_{\bridge}^{n}.
  \end{equation}
\end{proposition}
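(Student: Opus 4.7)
The plan is to sum the estimate in \Cref{lem:MUV-OS-kp} over $k$, absorbing the $k$-dependence into an exponentially convergent series via \Cref{lem:Decay}, while the subexponential $e^{c_1\sqrt{n}}$ factor will arise from combining $P(n)$ (controlled by \Cref{thm:Partition-Count}) with the $(1+\kappa)^{3d(d-1)\sqrt{n}}$ factor that is already present on the right-hand side of \eqref{eq:MUV-OS-kp}.

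Concretely, I would begin by writing $\hsw_{n}=\bigsqcup_{k=0}^{n}\hsw_{n}^{k}$, so
\begin{equation*}
  h_{n}(\kappa)=\sum_{k=0}^{n}\sum_{\omega\in\hsw_{n}^{k}}\wt_{\kappa}(\omega).
\end{equation*}
Apply \Cref{lem:MUV-OS-kp} to each inner sum. Next, bound the sum of bridge weights using \Cref{prop:Bridge-CC}: since $\mu_{\bridge}=\sup_{m\geq 1} (b_{m}(\kappa))^{1/m}$, we have
\begin{equation*}
  \sum_{\omega'\in\bridge_{n+2\delta_{k}}(n)}\wt_{\kappa}(\omega')\leq b_{n+2\delta_{k}}(\kappa)\leq \mu_{\bridge}^{n+2\delta_{k}}=\mu_{\bridge}^{n}\cdot\mu_{\bridge}^{2\delta_{k}}.
\end{equation*}
Grouping the $\delta_{k}$-exponents gives the factor $(\tflipcost\mu_{\bridge}^{2})^{\delta_{k}}$, which combined with $\binom{\alpha_{k}}{\delta_{k}}^{-1}(1+\kappa)^{k}$ is exactly the expression controlled by \Cref{lem:Decay}: it is at most $Ke^{-ak}$ for small enough $\kappa$.

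Pulling the $n$-dependent factors out of the sum over $k$, we obtain
\begin{equation*}
  h_{n}(\kappa)\leq P(n)(1+\kappa)^{3d(d-1)\sqrt{n}}\mu_{\bridge}^{n}\sum_{k=0}^{n}Ke^{-ak}\leq K'\,P(n)(1+\kappa)^{3d(d-1)\sqrt{n}}\mu_{\bridge}^{n},
\end{equation*}
since the geometric series converges. Finally, \Cref{thm:Partition-Count} gives $P(n)\leq e^{(\pi\sqrt{1/3}+o(1))\sqrt{n}}$, and $(1+\kappa)^{3d(d-1)\sqrt{n}}=e^{3d(d-1)\log(1+\kappa)\sqrt{n}}$, so their product is bounded by $e^{c_{1}\sqrt{n}}$ for some $c_{1}=c_{1}(d,\kappa)>0$. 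This yields \eqref{eq:MVU-Bound}.

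The main subtlety, and the place requiring the most care, is matching the exponents correctly: one must check that $\tflipcost^{\delta_{k}}\mu_{\bridge}^{2\delta_{k}}=(\tflipcost\mu_{\bridge}^{2})^{\delta_{k}}$ is the exact quantity that \Cref{lem:Decay} controls, so that the $(1+\kappa)^{k}$ factor appearing in \eqref{eq:MUV-OS-kp} is absorbed into the geometric decay in $k$ rather than accumulating into the subexponential prefactor. Once this bookkeeping is in place the remaining estimates are immediate from the lemmas already proven.
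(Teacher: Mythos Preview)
Your proposal is correct and follows essentially the same route as the paper: apply \Cref{lem:MUV-OS-kp}, drop the span constraint and use \Cref{prop:Bridge-CC} to bound the bridge mass by $\mu_{\bridge}^{n+2\delta_k}$, absorb the $k$-dependent factors via \Cref{lem:Decay}, and control $P(n)(1+\kappa)^{3d(d-1)\sqrt n}$ by \Cref{thm:Partition-Count}. The only cosmetic point is that the upper limit $k\leq n$ in your decomposition of $\hsw_n$ need not hold in general (a single edge can lie in up to $2(d-1)$ plaquettes), but since the sum $\sum_k Ke^{-ak}$ converges regardless of the upper limit this does not affect the argument.
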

\begin{proof}
  By \Cref{lem:MUV-OS-kp} and \Cref{lem:Decay},
  \begin{equation*}
    \label{eq:MVU-Bound-1}
    \sum_{\omega\in\hsw_{n}^{k}}\wt_{\kappa}(\omega) \leq
    Ke^{-a k} 
    \mu_{\bridge}^{-2\delta_{k}}
    (1+\kappa)^{3d(d-1)\sqrt{n}}
    P(n) \sum_{\omega'\in 
      \bridge_{n+2\delta_{k}}(n)}\wt_{\kappa}(\omega^{\prime}).
  \end{equation*}
  By \Cref{prop:Bridge-CC},
  $b_{\ell}(\kappa) \leq \mu_{\bridge}(\kappa)^{\ell}$. Hence, dropping
  the constraint that the spans of bridges on the right-hand side
  of~\eqref{eq:MVU-Bound-1} are at most $n$ yields
  \begin{equation}
    \label{eq:MVU-Bound-1.1}
    \sum_{\omega\in\hsw_{n}^{k}}\wt_{\kappa}(\omega) \leq
    Ke^{-a k}
    (1+\kappa)^{3d(d-1)\sqrt{n}}
    P(n) \mu_{\bridge}^{n}.
  \end{equation}
  The right-hand side of~\eqref{eq:MVU-Bound-1.1} is summable in $k$,
  and the left-hand side sums to $h_{n}(\kappa)$. By
  \Cref{thm:Partition-Count},
  $(1+\kappa)^{3d(d-1)\sqrt{n}} P(n)$ is at most
  $K_{1}e^{\con_{1}\sqrt{n}}$ for constants $K_{1},\con_{1}>0$; this
  completes the proof.
\end{proof}

\begin{proof}[Proof of \Cref{thm:CC-Exists-Intro}]
  To prove $\mu(\kappa) = \mu_{\bridge}(\kappa)$, it suffices to prove
  that there exist $K'$ and $\con'$ such that
  \begin{equation}
    \label{eq:DUB}
    b_{n}(\kappa) \leq c_{n}(\kappa)\leq
    K'e^{\con'\sqrt{n}}\mu_{\bridge}(\kappa)^{n}.
  \end{equation}

  For $\omega\in\saw_{n}$, let $m$ be the maximal $i$ such that
  $\pi_{1}(\omega_{i})$ is minimized. Let
  $\omega^{1} = \omega_{\cb{0,m}}$ and
  $\omega^{2}=\trans_{(-\omega_{m})}\,\omega_{\cb{m,n}}$. The first
  part of the proof is to define a multivalued map $\Psi$ that assigns
  to $(\omega^{1},\omega^{2})$ a set of pairs of half-space walks
  $\{(\eta^{1},\eta^{2})\}$ in an injective way. Note that
  $\omega^{2}$ is a half-space walk.

  The \emph{reversal} of a walk $\eta=(\eta_{i})_{i=1}^{m}$ is the
  walk $(\eta_{m-i})_{i=0}^{m-1}$; this walk begins at the endpoint of
  $\eta$ and goes back to the initial vertex. Translate the reversal
  of $\omega^{1}$ so that the walk begins at the origin, i.e.,
  consider the reversal of $\trans_{(-\omega_{m})}\omega^{1}$. This is
  almost a half-space walk; the only problem is that it may visit the
  coordinate hyperplane $\pi_{1}^{-1}(o)$ more than just at the
  initial vertex.

  We now define $\Psi$. Set $\eta^{2} \bydef \omega^{2}$. Define
  $\tilde\omega^{1}$ to be the concatenation of the one-step
  self-avoiding walk from $o$ to $e_{1}$ with the reversal of
  $\omega^{1}$.  Let $x \bydef e_{1}-\omega_{m}$ denote the vector
  along which $\omega^{1}$ is translated when forming this
  concatenation. Note that $\tilde\omega^{1}$ is a half-space walk.

  Let $\adj = \adj(\omega^{1},\omega^{2})$, and suppose this set
  contains $k$ flippable plaquettes for $\omega^{1}$. Let
  $\adj^{\star}\subset\adj$ be a subset of disjoint flippable
  plaquettes of size~$\alpha_{k}$; such a subset exists by
  \Cref{lem:P-Fraction}. Then
  \begin{equation*}
    \Psi(\omega) \bydef \left\{ (\eta^{1},\eta^{2}) \mid \eta^{1} =
      \flip_{\trans_{x}B}(\tilde\omega^{1}), B\in { \adj^{\star} \choose
        \delta_{k}}\right\}.
  \end{equation*}
  By an argument as in the proof of \Cref{lem:Phi-Properties},
  $\eta^{1}$ is a half-space walk, as any flips do not change the
  minimal value of the first coordinate of $\tilde{\omega}^{1}$. Note
  that if $\eta^{2}\in\bridge_{m}$, then
  $\eta^{1}\in\bridge_{n-m+2\delta_{k}+1}$.

  We now verify that it is possible to reconstruct
  $(\omega^{1},\omega^{2})$, and hence $\omega$ itself, from any image
  $(\eta^{1},\eta^{2})$.  Given $\eta^{1}$, the translation applied to
  $\omega^{1}$ is determined, as flips do not change the endpoints of
  walks. The translation applied to $\omega^{1}$ determines the
  translation applied to $\eta^{2}$, and hence determines
  $\omega^{2}$. Hence, by \Cref{lem:Flip-Invert}, $\omega^{1}$ is
  determined since we know $\omega^{2}$ and $\eta^{1}$.

  Recall that $\sqcup$ indicates a union of disjoint sets, and note
  $\saw_{n} = \sqcup_{k=0}^{n}\saw_{n}^{k}$, where $\saw_{n}^{k}$ is
  the set of self-avoiding walks such that
  $\abs{\adj(\omega^{1},\omega^{2})}=k$.  Proceeding as in the proof
  of \Cref{lem:MUV-OS-kp} yields
  \begin{equation*}
    c_{n}(\kappa)
    \leq
    \sum_{m=0}^{n}\sum_{k\leq m} {\alpha_{k}\choose \delta_{k}}^{-1}
    (\tflipcost)^{\delta_{k}} (1+\kappa)^{k+\nrp} h_{m}(\kappa)
    h_{n-m+2\delta_{k}+1}(\kappa).
  \end{equation*}
  The exponent of $(1+\kappa)$ is $k+\nrp$ as we have used
  \Cref{lem:Adjacent-Flippable} in the unfolding step that created the
  pair of half-space walks. Using \Cref{prop:MVU-Bound} to estimate
  the factors $h_{\ell}(\kappa)$ and \Cref{lem:Decay} to estimate the
  remaining terms yields
  \begin{equation*}
    c_{n}(\kappa)
    \leq K'\sum_{m=0}^{n}
    \sum_{k\leq m} e^{-\con k}
    e^{\con_{1}\sqrt{m}}e^{\con_{1}\sqrt{n-m+2\delta_{k}+1}} \mu_{\bridge}^{n},
  \end{equation*}
  where $K'$ is the product of the constant prefactors. The inequality
  $\sqrt{x} + \sqrt{y} \leq \sqrt{2x+2y}$ combined with the fact that
  $k$ is at most $n$ implies a further upper bound of the form
  \begin{equation*}
    c_{n}(\kappa)\leq K''(n+1)e^{\con'\sqrt{n}}\mu_{\bridge}^{n}
  \end{equation*}
  for some $K'',\con'>0$. As this establishes~\eqref{eq:DUB}, the
  proof is complete.
\end{proof}

\section{Averaged submultiplicativity and initial consequences}
\label{sec:ASM-Total}

The transformation used to estimate entropic gains in \Cref{sec:CC}
flipped a fixed fraction $\delta\alpha$ of flippable plaquettes,
and this led to $\slb$-dependent constants in estimates, where
  we recall $\slb$ depends on the step distribution $D$. This was
convenient as it gave us precise control over the length added to a
walk. For a lace expansion analysis the lack of uniformity in $\slb$
complicates matters, and this section defines a greedier transformation
that enables estimates uniform in $\slb$ when $\kappa = \kappa(\slb)$
is chosen correctly.  The price to pay is less control over the
increase in length of a walk.

\subsection{$\kappa$-ASAW with a memory}
\label{sec:Memory}

We first define memories, which will play the role of boundary
conditions for self-avoiding walks. Recall that $\sap(o)$ is the set
of self-avoiding polygons that begin at the origin.
\begin{definition}
  \label{def:Memory}
  A \emph{memory} is a walk $\eta\in
  \bigcup_{x}\saw(x,o) \cup\sap(o)$.
\end{definition} 
By a slight abuse of notation, let $\omega\cap\eta$ denote the set of
vertices in common to two walks $\omega$ and $\eta$. We will now
define the law of $\kappa$-ASAW conditional on having memory
$\eta$. Let $\sawno \bydef \cup_{x}\saw_{n}(x)$ denote the set of
$n$-step SAW with initial vertex $o$. For $n\in\N$, $\kappa\geq 0$,
and a memory $\eta$, define \emph{$n$-step $\kappa$-ASAW with memory
  $\eta$} to be the law $\PP^{\eta}_{n,\kappa}$ on $\sawno$ given by
\begin{equation}
  \label{eq:Model-BC}
  \PP_{n,\kappa}^{\eta}(\omega) \propto \wt_{\kappa}(\omega;\eta)
  \indicatorthat{\omega\in\sawno,\,\omega_{\cb{1,n}}\cap\eta
    = \emptyset}, 
\end{equation}
where
\begin{equation}
  \label{eq:Hamiltonian-BC}
  \wt_{\kappa}(\omega;\eta) \bydef 
  e^{-H_{\kappa}(\omega;\eta)}
  \P_{n}(\omega), 
  \quad e^{-H_{\kappa}(\omega;\eta)} =
  (1+\kappa)^{\abs{\adj(\omega)}}(1+\kappa)^{\abs{\adj(\eta,\omega)}},
\end{equation}
where we have recalled the definition of $H_{\kappa}(\omega;\eta)$
from \eqref{eq:Conditional-Interaction-Expl} for the convenience of
the reader.  Thus $\PP_{n,\kappa}^{\eta}$ is supported on
self-avoiding walks that intersect $\eta$ only at their initial vertex
$\omega_{0}=o$, and $\PP_{n,\kappa}^{\eta}$ gives a reward of
$(1+\kappa)$ for each pair of adjacent edges in $\omega$ and for each
time an edge of $\omega$ is adjacent to an edge of $\eta$.

\subsection{Averaged submultiplicativity}
\label{sec:ASM}

Let $\eta$ be a memory.  Define
\begin{equation*}
  \label{eq:saw-n-k-eta}
  \saw_{n,k}^{\eta}(x) \bydef \{\omega\in \saw_{n}(x) \mid
  \PP^{\eta}_{n,\kappa}(\omega)>0,\abs{\adj_{1}(\omega,\eta)}=k\},
\end{equation*}
where (by a slight abuse of the notation in \eqref{eq:M-CU-2})
$\adj_{1}(\omega,\eta)$ is the set of plaquettes that are flippable
for $\omega$ in $\adj(\omega,\eta)$.  The \emph{$(n,k)$ two-point
  function with memory~$\eta$} is
\begin{equation}
  \label{eq:2P-F-FM}
  c_{n,k}^{\eta}(x) \bydef \sum_{\omega\in\saw_{n,k}^{\eta}(x)}
  \wt_{\kappa}(\omega;\eta).
\end{equation}
The dependence of $c_{n,k}^{\eta}(x)$ on $\kappa$ is left
implicit. Let $c_{n}^{\eta} (x) \bydef \sum_{k\geq 0} c_{n,k}^{\eta}(x)$.

\begin{definition}
  \label{def:2PT-M}
  Let $z\geq 0$, $\kappa\geq 0$, and $x\in \Z^{d}$. The
  \emph{two-point function $G^{\eta}_{z,\kappa}(x)$ with memory
    $\eta$} is defined to be
  \begin{equation}
    \label{eq:2PT-M}
    G_{z,\kappa}^{\eta}(x) \bydef \sum_{n\geq 0}z^{n}c_{n}^{\eta}(x)
  \end{equation}
  If $\eta=\emptyset$ this is identically
  the $2$-point function of
    $\kappa$-ASAW as defined in \eqref{eq:ASAW2PT}. 
\end{definition}

Recall that the critical point $z_{c}(\kappa)$ was specified in
\Cref{def:crit}. The next proposition will imply that $G^{\eta}_{z,\kappa}$
is well-defined when $z<z_{c}(\kappa)$.
\begin{definition}
  \label{def:z0}
  Define $z_{0}(\kappa)\bydef  (1+\kappa)^{-2(d-1)}$.
\end{definition}
This choice of $z_{0}$ will be useful in
\Cref{sec:proof-gaussian-decay} for making comparisons with
  simple random walk. Recall that $\nrp$ was defined
  in~\eqref{eq:nrp}.

\begin{proposition}
  \label{prop:ASM}
  Suppose that $\kappa_{0}(d,\slb)$ is sufficiently small and that
  $z\geq z_{0}(\kappa)$.  Then, for all $\kappa< \kappa_{0}$,
  \begin{equation}
    \label{eq:ASM}
    G^{\eta}_{z,\kappa}(x) \leq (1+\kappa)^{\nrp}G_{z,\kappa}(x).
  \end{equation}
  In particular, $G^{\eta}_{z,\kappa}(x)$ is an absolutely convergent power series
  when $\abs{z}<z_{c}(\kappa)$. 
\end{proposition}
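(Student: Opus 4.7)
\emph{The plan.} Following the multivalued-unfolding strategy of \Cref{sec:CU,sec:MVU-II}, the idea is to dominate the memory weight $\wt_\kappa(\omega;\eta)$ for each $\omega\in \saw^{\eta}_{n,k}(x)$ by an average over unconstrained walks obtained by flipping subsets of the flippable plaquettes in $\adj(\omega,\eta)$. Decompose
\[
G^{\eta}_{z,\kappa}(x)=\sum_{k\geq 0}\sum_{n\geq 0} z^n c^{\eta}_{n,k}(x).
\]
For each $\omega\in\saw^{\eta}_{n,k}(x)$, use \Cref{lem:P-Fraction} to fix a disjoint flippable subset $\adj^{\star}_1(\omega,\eta)\subseteq \adj_1(\omega,\eta)$ of cardinality $\alpha_k=\lceil\alpha k\rceil$, and, for a parameter $\delta\in(0,\tfrac12)$ to be chosen, consider the multivalued map
\[
\Phi(\omega):=\bigl\{\flip_S(\omega):S\subseteq \adj^{\star}_1(\omega,\eta),\ |S|=\delta_k\bigr\},\qquad \delta_k:=\lceil\delta\alpha k\rceil.
\]
By \Cref{lem:Flipping-Disjoint}, each image lies in $\saw_{n+2\delta_k}(x)$; by \Cref{lem:Flip-Invert}, given $\eta$ the pair $(\omega,S)$ is uniquely recovered from any $\omega'\in\Phi(\omega)$, hence $|\Phi^{-1}(\omega')|\leq 1$ and $|\Phi(\omega)|=\binom{\alpha_k}{\delta_k}$.

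\emph{Weight estimates and multivalued map principle.} Two bounds are combined. First, \Cref{lem:Adjacent-Flippable} applied to $\eta\circ\omega$ gives $|\adj(\omega,\eta)|\leq k+\nrp$ (or $k+2\nrp$ in the polygonal case), so $\wt_\kappa(\omega;\eta)\leq (1+\kappa)^{k+\nrp}\wt_\kappa(\omega)$. Second, iterating \Cref{lem:TFC-Internal} yields $\wt_\kappa(\omega)\leq \tflipcost^{\delta_k}\wt_\kappa(\omega')$ for every $\omega'\in\Phi(\omega)$. Interchanging sums via the multivalued map principle then gives, with $c_m(x):=\sum_{\omega\in \saw_m(x)}\wt_\kappa(\omega)$,
\[
\sum_{\omega\in\saw^{\eta}_{n,k}(x)}\wt_\kappa(\omega;\eta)\leq \binom{\alpha_k}{\delta_k}^{-1}(1+\kappa)^{k+\nrp}\tflipcost^{\delta_k}c_{n+2\delta_k}(x).
\]
Multiplying by $z^n$, summing on $n$ via $\sum_n z^n c_{n+2\delta_k}(x)\leq z^{-2\delta_k}G_{z,\kappa}(x)$, and then summing on $k$:
\[
G^{\eta}_{z,\kappa}(x)\leq (1+\kappa)^{\nrp}\,G_{z,\kappa}(x)\sum_{k\geq 0}\binom{\alpha_k}{\delta_k}^{-1}(1+\kappa)^k\bigl(\tflipcost z^{-2}\bigr)^{\delta_k}.
\]

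\emph{Convergence and main obstacle.} It remains to show the $k$-series is bounded by a constant close to $1$ for $z\geq z_0$ and $\kappa$ small. For $z\geq z_0=(1+\kappa)^{-2(d-1)}$ one has $\tflipcost z^{-2}\leq \slb^{-2}(1+\kappa)^{6d-8}$. Using the estimate $\binom{\alpha_k}{\delta_k}^{-1}\leq (2\delta)^{\delta\alpha k}$ (as in the proof of \Cref{lem:Decay}), the $k$-th summand is dominated by a geometric term with ratio $(1+\kappa)\bigl(2\delta\tflipcost z^{-2}\bigr)^{\delta\alpha}$. Choosing $\delta$ proportional to $\slb^2$ makes $2\delta\tflipcost z^{-2}$ bounded away from $1$, after which taking $\kappa_0=\kappa_0(d,\slb)$ small enough forces the ratio strictly below $1$, so the series converges and tends to $1$ as $\kappa\to 0$. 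The hard part is the factor $\slb^{-2}$ sitting inside $\tflipcost$: it encodes the $\P$-cost of the two low-probability edges added per flip, and it is exactly what forces $\kappa_0$ to depend on $\slb$ and prevents the bridge-based unfolding of \Cref{sec:CC} from being carried over verbatim. The condition $z\geq z_0$ is chosen precisely to keep $\tflipcost z^{-2}$ at most polynomial in $(1+\kappa)$ so that this balancing is possible. Finally, absolute convergence of $G^{\eta}_{z,\kappa}(x)$ for $|z|<z_c(\kappa)$ follows because $G_{z,\kappa}(x)$ converges absolutely on this disc, and the inequality extends to $0\leq z\leq z_0$ by monotonicity of power series with non-negative coefficients.
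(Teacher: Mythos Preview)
Your approach is in the right spirit but does not yield the stated constant $(1+\kappa)^{\nrp}$, and your claim that the $k$-series ``tends to $1$ as $\kappa\to 0$'' is false. Indeed, the $k=0$ term of your series is $1$, while for $k\geq 1$ the summands remain strictly positive as $\kappa\to 0$: for instance when $k=1$ one has $\alpha_1=\delta_1=1$ and the term is $(1+\kappa)\,\tflipcost z^{-2}\to \slb^{-2}z^{-2}>0$. Hence your bound is $C(d,\slb)\,(1+\kappa)^{\nrp}G_{z,\kappa}(x)$ with $C>1$, not the inequality asserted in \Cref{prop:ASM}. This matters downstream: the constant is applied $2m-1$ times in the diagrammatic bounds of \Cref{prop:DB}, so an extra $\slb$-dependent factor would propagate as $C^{2m-1}$.

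The paper's proof fixes this by using a \emph{greedier} multivalued map (flagged at the start of \Cref{sec:ASM-Total}): instead of flipping exactly $\delta_k$ plaquettes, it lets $B$ range over \emph{all} subsets of $\adj^{\star}$. Summing over $B\subset\adj^{\star}$ gives the product bound
\[
\sum_{\omega'\in\Phi(\omega)} z^{|\omega'|}\wt_\kappa(\omega')
\;\geq\;
\bigl(1+z^{2}\slb^{2}(1+\kappa)^{-(2d-4)}\bigr)^{\alpha_k}\, z^{m}\wt_\kappa(\omega),
\]
and combining with $\wt_\kappa(\omega;\eta)\leq (1+\kappa)^{k+\nrp}\wt_\kappa(\omega)$ yields a single ratio
\[
\frac{(1+\kappa)^{k/\alpha_k}}{1+z^{2}\slb^{2}(1+\kappa)^{-(2d-4)}}
\;\leq\;
\frac{(1+\kappa)^{1/\alpha}}{1+z_0^{2}\slb^{2}(1+\kappa)^{-(2d-4)}},
\]
which is $<1$ at $\kappa=0$ and hence for small $\kappa$. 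This makes the entropy--energy factor at most $1$ \emph{pointwise in $k$}, so no residual $k$-sum appears. Finally, injectivity of $\Phi$ across \emph{all} $\omega\in\bigsqcup_k\saw^{\eta}_{m,k}(x)$ (via \Cref{lem:Flip-Invert}) gives $G^{\eta}_{z,\kappa}(x)\leq (1+\kappa)^{\nrp}G_{z,\kappa}(x)$ on the nose. The fix to your argument is therefore simple: drop the size constraint $|S|=\delta_k$ and sum over all $S\subset\adj^{\star}$.
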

\begin{proof}
  We begin by defining a multivalued map $\Phi$ that will quantify the
  entropic gain of forgetting a memory.  Let
  $\omega\in \saw_{m,k}^{\eta}$. By \Cref{lem:P-Fraction}, there
  exists a set $\adj^{\star} \subset \adj(\eta,\omega)$ of
  $\alpha_{k}=\ceil{\alpha k}$ disjoint flippable plaquettes for
  $\omega$. Define $\Phi$ by
  \begin{equation}
    \label{eq:Phi-full}
    \Phi(\omega) = \{\omega^{\prime} \mid \omega^{\prime} =
    \flip_{B}(\omega), B\subset \adj^{\star}\}.
  \end{equation}
  The definition of $\Phi$ implicitly depends on $\eta$
  through $\adj^{\star}$, but we suppress this dependence from the
  notation as $\eta$ is fixed.

   To begin, we compare the weight of $\omega\in\saw_{m,k}^{\eta}(x)$
   under $\wt_{\kappa}(\cdot;\eta)$ to the weight $\wt_{\kappa}(\cdot)$
   of its image $\Phi(\omega)$. We claim that
   \begin{equation}
     \label{eq:c2}
     z^{m}\wt_{\kappa}(\omega;\eta) 
     \leq 
     (1+\kappa)^{\nrp} \ob{\frac{(1+\kappa)^{\frac{k}{\alpha_{k}}}}
       {1+z^{2}\slb^{2}(1+\kappa)^{-(2d-4)}}}^{\alpha_{k}}
     \sum_{\omega^{\prime}\in\Phi(\omega)}
     z^{\abs{\omega^{\prime}}}\wt_{\kappa}(\omega^{\prime})
   \end{equation}
   To see this, note that at each $P\in\adj^{\star}$ a flip may or may
   not occur. Hence the definition of $\Phi$ implies
   \begin{equation}
     \label{eq:c2.1}
     \sum_{\omega^{\prime}\in\Phi(\omega)}
     z^{\abs{\omega^{\prime}}} \wt_{\kappa}(\omega^{\prime})
     \geq 
     \prod_{j=1}^{\ceil{\alpha
         k}}(1+z^{2}\slb^{2}(1+\kappa)^{-(2d-4)})
     z^{m}\wt_{\kappa}(\omega).
   \end{equation}
   Formally, this bound arises by applying \Cref{lem:TFC-Internal}
   $\abs{B}$ times to $\omega^{\prime}=\flip_{B}(\omega)$, and then
   summing over all $B\subset\adj^{\star}$. As
   $\omega\in \saw_{m,k}^{\eta}(x)$, there is a
   $\sigma\in\{0,1,\dots,\nrp\}$ such that
   $\wt_{\kappa}(\omega) = (1+\kappa)^{-k-\sigma}
   \wt_{\kappa}(\omega;\eta)$. The possible values for $\sigma$
   follows from the proof of \Cref{lem:Adjacent-Flippable}, which
   shows there are at most $\nrp$ plaquettes that are not flippable in
   $\adj(\omega,\eta)$, because such plaquettes only occur at points
   of concatenation.  Inserting this formula for
   $\wt_{\kappa}(\omega)$ into~\eqref{eq:c2.1} and rearranging
   gives~\eqref{eq:c2}.

   Since $\kappa\geq 0$,
   \begin{equation}
     \label{eq:cty}
     \frac{(1+\kappa)^{\frac{k}{\alpha_{k}}}}
     {1+z^{2}\slb^{2}(1+\kappa)^{-(2d-4)}}
     \leq
     \frac{(1+\kappa)^{\frac{1}{\alpha}}}{1+z^{2}\slb^{2}(1+\kappa)^{-(2d-4)}}.
   \end{equation}
   The right-hand side of~\eqref{eq:cty} is at most
   $(1+\kappa)^{\frac{1}{\alpha}}(1+z_{0}^{2}\slb^{2}(1+\kappa)^{-(2d-4)})^{-1}$,
   which is strictly less than one when $\kappa=0$.  The right-hand
   side of~\eqref{eq:cty} is continuous in $\kappa$ and hence is
   strictly less than one for small positive $\kappa$. Thus, for
   $\kappa$ sufficiently small and $z\geq z_{0}$, \eqref{eq:c2}
   implies
   \begin{equation}
     \label{eq:ASM-1}
     z^{m}\wt_{\kappa}(\omega;\eta)\leq (1+\kappa)^{\nrp}
     \sum_{\omega^{\prime}\in\Phi(\omega)} z^{\abs{\omega^{\prime}}}
     \wt_{\kappa}(\omega^{\prime}). 
 \end{equation}

 Sum~\eqref{eq:ASM-1} over
 $\omega\in\saw^{\eta}_{m}(x) = \sqcup_{k\geq
   0}\saw^{\eta}_{m,k}(x)$. To conclude the proof what must be shown
 is that $\Phi(\omega)\subset \saw(x)$, and that if
 $\omega^{i}\in\saw^{\eta}_{m}(x)$, $i=1,2$, then
  \begin{equation}
    \label{eq:Image}
    \omega^{1}\neq\omega^{2}\implies
    \Phi(\omega^{1})\cap\Phi(\omega^{2})=\emptyset.
  \end{equation}
  The first claim follows from \Cref{lem:Flipping-Disjoint}. To prove
  the second claim, suppose $\gamma^{i}\in\Phi(\omega^{i})$,
  $i=1,2$. Then as $\gamma^{i}$ is the result of flipping $\omega^{i}$
  at a disjoint set $B$ of flippable plaquettes,
  \Cref{lem:Flip-Invert} implies $\omega^{i}$ is determined by
  $\gamma^{i}$ and $\eta$. Hence if $\gamma^{1}=\gamma^{2}$, then
  $\omega^{1}=\omega^{2}$.
\end{proof}

For our lace expansion analysis it will be necessary to have a
  slight generalization of \Cref{prop:ASM}. Let $\eta$ be a memory and define
\begin{equation}
  \label{eq:Model-BCp}
  \bar{\PP}_{n,\kappa}^{\eta}(\omega) \propto
  \wt_{\kappa}(\omega;\eta)
  \indicatorthat{\omega\in\sawno,\omega_{\cb{1,n-1}}\cap\eta
    = \emptyset}.
\end{equation}
Thus $\bar{\PP}_{n,\kappa}^{\eta}$ is a law on $n$-step self-avoiding walks
that do not intersect $\eta$ except for (i) at $\omega_{0}=o$ and (ii)
possibly at $\omega_{n}$. Let $\bar G^{\eta}_{z,\kappa}(x)$ denote the
two-point function for walks with law $\bar{\PP}_{n,\kappa}^{\eta}$.

\begin{corollary}
  \label{cor:ASM-Ep}
  \Cref{prop:ASM} holds for $\bar G^{\eta}_{z,\kappa}$ in place of
  $G^{\eta}_{z,\kappa}$ after changing $(1+\kappa)^{\nrp}$ to
  $(1+\kappa)^{2\nrp}$. 
\end{corollary}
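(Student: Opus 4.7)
The plan is to repeat the proof of \Cref{prop:ASM} essentially verbatim, adjusting only for the fact that walks in the support of $\bar{\PP}_{n,\kappa}^{\eta}$ may intersect the memory $\eta$ at two vertices (the origin and possibly the terminal vertex $\omega_n$) rather than just at the origin.

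First, I would set up the same multivalued map $\Phi$ as in~\eqref{eq:Phi-full}: given $\omega\in\sawno$ with $\abs{\adj_1(\omega,\eta)}=k$, choose a disjoint subset $\adj^{\star}\subset\adj(\eta,\omega)$ of flippable plaquettes for $\omega$ of size $\alpha_k$ via \Cref{lem:P-Fraction}, and let $\Phi(\omega)\bydef\{\flip_B(\omega) \mid B\subset\adj^{\star}\}$. The injectivity argument at the end of the proof (using \Cref{lem:Flip-Invert} to recover $\omega$ from any image together with $\eta$) transfers immediately, as does the containment $\Phi(\omega)\subset\saw(x)$ via \Cref{lem:Flipping-Disjoint}.

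The one place where the bound changes is the identity $\wt_{\kappa}(\omega) = (1+\kappa)^{-k-\sigma}\wt_{\kappa}(\omega;\eta)$, where $\sigma$ counts the plaquettes in $\adj(\omega,\eta)$ that are not flippable for $\omega$. Inspecting the case analysis in the proof of \Cref{lem:Adjacent-Flippable}, each non-flippable plaquette in $\adj(\omega,\eta)$ must contain a vertex common to $\omega$ and $\eta$. Under $\bar{\PP}_{n,\kappa}^{\eta}$ the walk $\omega$ may share the vertex $\omega_0=o$ and additionally the vertex $\omega_n$ with $\eta$, so there are at most two such vertices. Since each vertex of $\Z^d$ lies in $\nrp$ plaquettes, this gives $\sigma\in\{0,1,\dots,2\nrp\}$ rather than $\sigma\in\{0,1,\dots,\nrp\}$.

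Propagating this through the derivation of~\eqref{eq:c2} multiplies the overall prefactor by an extra $(1+\kappa)^{\nrp}$. The continuity argument of~\eqref{eq:cty} is unaffected, since that concerns only the expression in the parentheses raised to the $\alpha_k$ power. Summing~\eqref{eq:ASM-1} (now with prefactor $(1+\kappa)^{2\nrp}$) over $\omega\in\saw^{\eta}_{m,k}(x)$ for $k\geq 0$ and over $m\geq 0$ with the weight $z^m$, and using that the images $\Phi(\omega)$ are mutually disjoint subsets of $\saw(x)$, yields $\bar G^{\eta}_{z,\kappa}(x)\leq (1+\kappa)^{2\nrp}G_{z,\kappa}(x)$, as claimed. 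There is no substantive obstacle; the only bookkeeping is the doubling of the exponent due to the second allowed point of intersection at~$\omega_n$.
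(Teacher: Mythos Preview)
Your proposal is correct and takes essentially the same approach as the paper's own proof, which simply says the argument is \emph{mutatis mutandis} that of \Cref{prop:ASM}, with the extra factor of $(1+\kappa)^{\nrp}$ arising because the proof of \Cref{lem:Adjacent-Flippable} allows up to $2\nrp$ non-flippable plaquettes when there are two points of concatenation. Your write-up is more explicit than the paper's terse justification, but the content is identical.
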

\begin{proof}
  The proof for $\bar G^{\eta}_{z,\kappa}$ is, \emph{mutatis
    mutandis}, the proof of \Cref{prop:ASM}. The extra factor of
  $(1+\kappa)^{\nrp}$ arises as the proof of
  \Cref{lem:Adjacent-Flippable} allows for the existence of $2\nrp$
  plaquettes in $\adj(\eta,\omega)$ that are not flippable. This is
  because there may be $\nrp$ such plaquettes for each point of
  concatenation, of which there are at most $2$.
\end{proof}

\begin{corollary}
  \label{cor:ASM-E-1}
  Both \Cref{prop:ASM} and \Cref{cor:ASM-Ep} hold when the two-point
  functions are restricted to sums over walks of length at least $m$ for
  $m\in\N$.
\end{corollary}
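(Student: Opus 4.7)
The plan is to observe that the multivalued map $\Phi$ used in the proof of \Cref{prop:ASM} is length non-decreasing in a precise sense, and hence the length restriction $\abs{\omega}\geq m$ is preserved under $\Phi$. This will allow us to rerun the proofs of \Cref{prop:ASM} and \Cref{cor:ASM-Ep} verbatim after restricting both sides of the key inequality \eqref{eq:ASM-1} to the desired range of lengths.

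More precisely, recall that for $\omega\in\saw^{\eta}_{m',k}(x)$ the map $\Phi$ defined in \eqref{eq:Phi-full} sends $\omega$ to the set $\{\flip_{B}(\omega) \mid B\subset \adj^{\star}\}$, where $\adj^{\star}$ is a set of disjoint flippable plaquettes. Each flip adds exactly two edges, so every $\omega'\in\Phi(\omega)$ satisfies $\abs{\omega'} = \abs{\omega}+2\abs{B}\geq \abs{\omega}$. Consequently, if $\abs{\omega}\geq m$, then $\abs{\omega'}\geq m$ as well, i.e., $\Phi(\omega)\subset \bigcup_{n\geq m}\saw_{n}(x)$.

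Now I would sum \eqref{eq:ASM-1} over $\omega\in \bigsqcup_{m'\geq m,\,k\geq 0}\saw^{\eta}_{m',k}(x)$ rather than over all $m'\geq 0$. The left-hand side produces precisely the length-$\geq m$ restriction of $G^{\eta}_{z,\kappa}(x)$. On the right-hand side, the image sets $\Phi(\omega)$ remain disjoint across distinct $\omega$ by \Cref{lem:Flip-Invert} (exactly as in the proof of \Cref{prop:ASM}), and by the length observation above they are contained in the set of self-avoiding walks $\omega'$ ending at $x$ with $\abs{\omega'}\geq m$. Therefore the sum on the right is bounded above by the length-$\geq m$ restriction of $G_{z,\kappa}(x)$, giving the desired inequality. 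The same argument, applied to the map used in the proof of \Cref{cor:ASM-Ep}, handles the case of $\bar G^{\eta}_{z,\kappa}$.

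There is no real obstacle here; the only point worth verifying is that the length restriction passes through the multivalued map, and this is immediate because flips only lengthen walks. All other ingredients---the choice of $\adj^{\star}$, the weight comparison \eqref{eq:c2}, the continuity argument yielding \eqref{eq:ASM-1} for $z\geq z_{0}$ and $\kappa$ small, and the injectivity of $\Phi$---are unchanged by the restriction.
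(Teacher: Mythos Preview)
Your proposal is correct and takes essentially the same approach as the paper: the paper's proof consists of the single observation that the map $\Phi$ used in the proof of \Cref{prop:ASM} only increases the length of a walk. You have simply spelled out in detail why this suffices.
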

\begin{proof}
  The map $\Phi$ used in the proof of \Cref{prop:ASM} only increases
  the length of a walk.
\end{proof}

\subsection{First applications of averaged submultiplicativity}
\label{sec:ASM-Cons-1}

It will be necessary to temporarily consider $\kappa$-ASAW in a finite
volume. Precisely, let $\Lambda = (\Z/L\Z)^{d}$ be a torus of side
length $L$. Define
\begin{equation*}
  \PP_{n,\kappa,\Lambda}(\omega)\propto
  \ob{\prod_{i=0}^{n}\indicatorthat{\omega_{i}\in\Lambda}} \PP_{n,\kappa}(\omega).
\end{equation*}
Let $\chi_{\Lambda}(z)$ be the susceptibility associated to
$\kappa$-ASAW on $\Lambda$, i.e.,
\begin{equation}
  \label{eq:Susceptbility-FV}
  \chi_{\Lambda,\kappa}(z) \bydef \sum_{n\geq 0} z^{n}
  \sum_{\omega\in \sawno}
  \wt_{\kappa}(\omega)\prod_{i=0}^{n}\indicatorthat{\omega_{i}\in\Lambda}.
\end{equation}
Note that $\chi_{\Lambda,\kappa}(z)$ is a polynomial in $z$ whenever
$\Lambda$ is finite. We will attach the subscript $\Lambda$ to
  other finite volume quantities in an analogous way.

\begin{lemma}
  \label{lem:DI-Chi}
  Let $\kappa_{0}(d,\slb)$ be as in \Cref{prop:ASM}. Fix
  $\kappa<\kappa_{0}(d,\slb)$ and $z\geq z_{0}$. On a finite torus
  $\Lambda$,
  \begin{equation}
    \label{eq:DI-Chi}
    -\frac{d}{dz}\ob{\chi_{\Lambda,\kappa}(z)}^{-1}\leq (1+\kappa)^{\nrp}z_{0}^{-1}.
  \end{equation}
\end{lemma}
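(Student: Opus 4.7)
The plan is to observe that $-\frac{d}{dz}\chi_{\Lambda,\kappa}^{-1}(z) = \chi_{\Lambda,\kappa}'(z)/\chi_{\Lambda,\kappa}(z)^{2}$, so it suffices to prove the differential inequality
\[
z\chi_{\Lambda,\kappa}'(z)\le(1+\kappa)^{\nrp}\chi_{\Lambda,\kappa}(z)^{2},
\]
since then dividing by $z\geq z_{0}$ yields the desired bound. This will be obtained by using the relation $z\chi_{\Lambda,\kappa}'(z)=\sum_{\omega} |\omega| z^{|\omega|}\wt_{\kappa}(\omega)$, where the sum runs over self-avoiding walks in $\Lambda$ starting at $o$, and then splitting each walk at each of its edges into a prefix $\omega^{1}$ and a suffix $\omega^{2}$ of length at least one.

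The key mechanism is the conditional interaction identity~\eqref{eq:Conditional-Interaction}, which factors $\wt_{\kappa}(\omega^{1}\circ\omega^{2})\indicatorthat{\omega^{1}\circ\omega^{2}\in\saw}$ as $\wt_{\kappa}(\omega^{1})\indicatorthat{\omega^{1}\in\saw}$ times $\wt_{\kappa}(\omega^{2};\omega^{1})\indicatorthat{\omega^{1}\circ\omega^{2}\in\saw}$. After translating $\omega^{2}$ so that its initial vertex is $o$, the constraint $\omega^{1}\circ\omega^{2}\in\saw$ becomes precisely $\omega^{2}_{[1,|\omega^{2}|]}\cap\eta=\emptyset$, where $\eta=\trans_{-x}\omega^{1}$ (with $x$ the endpoint of $\omega^{1}$) is a memory in the sense of \Cref{def:Memory}. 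This is exactly the support condition defining $\PP^{\eta}_{n,\kappa}$. Summing the inner sum over the endpoint $y$ of $\omega^{2}$ yields
\[
z\chi_{\Lambda,\kappa}'(z)=\sum_{\omega^{1}}z^{|\omega^{1}|}\wt_{\kappa}(\omega^{1})\sum_{y\in\Lambda}G^{\eta(\omega^{1})}_{z,\kappa,\Lambda,\,\geq 1}(y),
\]
where the subscript $\geq 1$ indicates restriction to walks of length at least one.

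At this point averaged submultiplicativity applies directly: by \Cref{cor:ASM-E-1}, each $G^{\eta}_{z,\kappa,\Lambda,\,\geq 1}(y)$ is bounded by $(1+\kappa)^{\nrp}G_{z,\kappa,\Lambda,\,\geq 1}(y)$ whenever $z\geq z_{0}$ and $\kappa<\kappa_{0}$; summing over $y\in\Lambda$ gives an upper bound of $(1+\kappa)^{\nrp}(\chi_{\Lambda,\kappa}(z)-1)\leq (1+\kappa)^{\nrp}\chi_{\Lambda,\kappa}(z)$. The outer sum over $\omega^{1}\in\saw_{\Lambda}$ evaluates to $\chi_{\Lambda,\kappa}(z)$, completing the argument.

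The only subtle point to verify is the identification of the inner sum as being governed by $G^{\eta}$ rather than $\bar G^{\eta}$: one must check that the self-avoidance of $\omega^{1}\circ\omega^{2}$ forbids $\omega^{2}$ from returning to $\omega^{1}$ at any step after $0$, including at its endpoint, which matches the support condition of $\PP^{\eta}_{n,\kappa}$ and so invokes \Cref{prop:ASM} with the constant $(1+\kappa)^{\nrp}$ rather than the weaker $(1+\kappa)^{2\nrp}$ of \Cref{cor:ASM-Ep}. The use of $\Lambda$ being finite enters only to guarantee that $\chi_{\Lambda,\kappa}(z)$ is a polynomial and thus differentiable and finite for all $z$; the manipulation of the power series is otherwise standard.
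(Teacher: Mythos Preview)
Your proof is correct and follows essentially the same route as the paper: split each walk into a prefix and suffix, factor via the conditional interaction~\eqref{eq:Conditional-Interaction}, and replace submultiplicativity by averaged submultiplicativity (\Cref{prop:ASM}/\Cref{cor:ASM-E-1}) on the torus. The only cosmetic difference is that the paper differentiates $z\chi_{\Lambda,\kappa}(z)$ and uses $|\omega|+1=\sum_{x}\indicatorthat{x\in\omega}$ to split at every vertex, whereas you differentiate $\chi_{\Lambda,\kappa}(z)$ directly and split at $|\omega|$ positions; both lead to the same inequality $z\chi'\le(1+\kappa)^{\nrp}\chi^{2}$ after dropping a non-negative term. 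One small expositional slip: your phrase ``prefix $\omega^{1}$ and suffix $\omega^{2}$ of length at least one'' suggests $|\omega^{1}|\ge 1$, but your displayed identity (and the count $|\omega|$) requires allowing $|\omega^{1}|=0$ with only $|\omega^{2}|\ge 1$; the formula you wrote is the correct one.
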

\begin{proof}
  The proof we will give is the standard one for self-avoiding
  walk, with averaged submultiplicativity (\Cref{prop:ASM}) replacing
  submultiplicativity. While \Cref{prop:ASM} is for $\kappa$-ASAW on
  $\Z^{d}$, the proof applies immediately to $\Lambda$ as well: the
  only property of the graph that was used in the proof is that flips
  of flippable plaquettes are well-defined.
  
  As $\chi_{\Lambda,\kappa}(z)$ is a polynomial in $z$, we can compute
  \begin{align}
    \label{eq:DIQ-1}
    \frac{d}{dz}\cb{z\chi_{\Lambda,\kappa}(z)}    
    &= \sum_{y\in \Lambda}\sum_{\omega\in\saw(y)}(\abs{\omega}+1)z^{\abs{\omega}}
      \wt_{\kappa}(\omega)\\
    \label{eq:DIQ-2}
    &= \sum_{x,y\in \Lambda}\sum_{\eta\in\saw(x)}
      \sum_{\omega'\in\saw(x,y)}
      z^{\abs{\eta}}z^{\abs{\omega'}} \wt_{\kappa}(\eta\circ\omega')
      \indicatorthat{\eta\circ\omega'\in \saw} \\
    \label{eq:DIQ-3}
    &= \sum_{x,y \in \Lambda}\sum_{\eta\in\saw(x)}z^{\abs{\eta}}
     \wt_{\kappa}(\eta) G^{\eta}_{z,\kappa,\Lambda}(y-x).
  \end{align}
  \Cref{eq:DIQ-2} follows by using the fact that for $\omega\in\saw$,
  \begin{equation*}
    \abs{\omega}+1 = \sum_{x}\indicator\{\textrm{$\omega_{i}=x$ for
      some $i$}\},
  \end{equation*}
  and then splitting $\omega$ into $\eta=\omega_{\cb{0,i}}$ and
  $\omega'$.  The third equality follows from the definition of the
  conditional weight $\wt_{\kappa}(\cdot\,;\eta)$ and the translation
  invariance of $G^{\eta}_{z,\kappa,\Lambda}$ on a torus.

  By (the finite-volume version of) \Cref{prop:ASM}, the memory
  $\eta$ can be ignored to obtain an upper bound. Summing over $y$
  results in a factor $\chi_{\Lambda,\kappa}(z)$, as does the sum over
  $x$. The result is
  \begin{equation}
    \label{eq:DIQ}
    \frac{d}{dz}\cb{z\chi_{\Lambda,\kappa}(z)}  \leq
    (1+\kappa)^{\nrp}(\chi_{\Lambda,\kappa}(z))^{2}.
  \end{equation}
  Computing the derivative on the left-hand side, rearranging, and using
  $\chi_{\Lambda,\kappa}(z)\geq 0$ proves the lemma.
\end{proof}

The validity of \Cref{lem:DI-Chi} for all $z\geq z_{0}$ and all finite
$\Lambda$ implies the continuity of the phase transition for
$\kappa$-ASAW and a mean-field lower bound on the critical
exponent $\gamma$. \Cref{prop:Gamma-LB} below is a formal statement of these
facts. We include a proof for completeness, although the
implication given \Cref{lem:DI-Chi} is well-known~\cite{Aizenman}. We
need one preparatory lemma.

\begin{lemma}
  \label{lem:expdecay}
  For any $z<z_{c}$ there are constants $c_{1}(z),c_{2}(z)$ such that
  $G_{z,\kappa}(x)\leq c_{1}(z)\exp(-c_{2}(z)\norm{x}_{\infty})$.
\end{lemma}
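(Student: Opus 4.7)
The plan is to derive exponential decay directly from finiteness of the susceptibility at some $z' \in (z,z_c)$, exploiting only the boundedness of the support of $D$. Since the paper assumes bounded step distributions, I will fix $L>0$ such that $D(y)=0$ whenever $\|y\|_\infty > L$.

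The key observation I would use is that every walk $\omega \in \saw(x)$ contributing to $G_{z,\kappa}(x)$ has each step $\omega_i - \omega_{i-1}$ in $\mathrm{supp}(D)$, so by the triangle inequality
\[
\|x\|_\infty \leq \sum_{i=1}^{|\omega|}\|\omega_i - \omega_{i-1}\|_\infty \leq L|\omega|,
\]
which gives $|\omega| \geq \|x\|_\infty/L$. The case $z=0$ is immediate since only the zero-step walk contributes, so I will assume $z>0$ and choose $z' \in (z,z_c)$, which is possible because $z<z_c$. For each such $\omega$, since $z/z' \in (0,1)$ and $|\omega|\geq \|x\|_\infty/L$,
\[
z^{|\omega|} = (z')^{|\omega|}(z/z')^{|\omega|} \leq (z')^{|\omega|}(z/z')^{\|x\|_\infty/L}.
\]
Multiplying by the nonnegative weight $\wt_\kappa(\omega)$ and summing over $\omega \in \saw(x)$ would then yield
\[
G_{z,\kappa}(x) \leq (z/z')^{\|x\|_\infty/L} G_{z',\kappa}(x) \leq (z/z')^{\|x\|_\infty/L}\,\chi_\kappa(z'),
\]
and setting $c_1(z) \bydef \chi_\kappa(z')$ and $c_2(z) \bydef L^{-1}\log(z'/z) > 0$ gives the claimed bound.

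I do not anticipate a significant obstacle. The argument relies only on the bounded support of $D$ (a standing assumption) and the Cauchy--Hadamard characterization of $z_c$ as the radius of convergence of $\chi_\kappa$; in particular, no appeal to the averaged submultiplicativity of \Cref{prop:ASM} or to the flipping machinery is required for this lemma. (If $z_c = 0$ the hypothesis is vacuous, so there is nothing to prove.)
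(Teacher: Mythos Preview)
Your proof is correct and follows essentially the same approach as the paper: the paper's proof is a one-sentence sketch observing that only walks of length at least $C\norm{x}_{\infty}$ contribute (by bounded support of $D$) and that these form the tail of the convergent series $\chi_{\kappa}(z)$, referring to \cite{Slade} for details; your argument is precisely the standard way to fill in those details by comparing with $\chi_{\kappa}(z')$ at an intermediate $z'\in(z,z_{c})$.
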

\begin{proof}
  This follows from the fact that the summation defining
  $G_{z,\kappa}(x)$ contains only walks of length at least
  $C\norm{x}_{\infty}$ for some $C>0$, and hence is contained in the
  tail of the convergent sum $\chi_{\kappa}(z)$.  For more details,
  see~\cite[p.11-12]{Slade}.
\end{proof}

\begin{proposition}
  \label{prop:Gamma-LB}
  Let $\kappa_{0}(d,\slb)$ be as in \Cref{prop:ASM}. For
  $0<\kappa<\kappa_{0}(d,\slb)$ and $z_{0}\leq
  z<z_{c}$, 
  \begin{equation}
    \label{eq:Gamma_LB}
    \chi_{\kappa}(z) \geq \frac{(1+\kappa)^{-\nrp}z_{0}}{z_{c}-z},
  \end{equation}
  where $\chi_{\kappa}(z)$ is defined by~\eqref{eq:Susceptibility}.
\end{proposition}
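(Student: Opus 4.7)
The plan is to integrate the differential inequality of \Cref{lem:DI-Chi}, pass to the infinite-volume limit, and finally let an auxiliary endpoint shrink down to $z_{c}$. Fix $\kappa<\kappa_{0}(d,\slb)$ and $z\in\co{z_{0}, z_{c}}$; the interval is non-empty because \Cref{thm:CC-Exists-Intro} combined with the crude bound $\mu(\kappa)\leq (1+\kappa)^{2(d-1)}$ (each edge in a self-avoiding walk has at most $2d-2$ neighbours, used already in the proof of \Cref{lem:Decay}) gives $z_{c}=\mu(\kappa)^{-1}\geq z_{0}$. Set $C\bydef (1+\kappa)^{\nrp}z_{0}^{-1}$ and pick any $z^{\star}>z_{c}$.

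On a finite torus $\Lambda$, the susceptibility $\chi_{\Lambda,\kappa}(s)$ is a polynomial in $s$ with $\chi_{\Lambda,\kappa}(0)=1$, so $f_{\Lambda}(s)\bydef(\chi_{\Lambda,\kappa}(s))^{-1}$ is smooth and strictly positive on $\cb{0,\infty}$, and \Cref{lem:DI-Chi} reads $f_{\Lambda}^{\prime}(s)\geq -C$ for all $s\geq z_{0}$. Integrating from $z$ to $z^{\star}$ yields
\begin{equation*}
  f_{\Lambda}(z)\ \leq\ f_{\Lambda}(z^{\star})+C(z^{\star}-z).
\end{equation*}
I now pass to the limit $\Lambda\uparrow\Z^{d}$ along a sequence of tori. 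Monotone convergence applied to the non-negative double sum defining $\chi_{\Lambda,\kappa}$ yields $\chi_{\Lambda,\kappa}(s)\uparrow \chi_{\kappa}(s)$ for every $s\geq 0$. Since $z<z_{c}$, $\chi_{\kappa}(z)<\infty$ (immediate from the definition of $z_{c}$, and reinforced by \Cref{lem:expdecay}), so $f_{\Lambda}(z)\downarrow(\chi_{\kappa}(z))^{-1}$; since $z^{\star}>z_{c}$, $\chi_{\kappa}(z^{\star})=\infty$, so $f_{\Lambda}(z^{\star})\downarrow 0$. The displayed inequality survives the limit to give $(\chi_{\kappa}(z))^{-1}\leq C(z^{\star}-z)$.

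Since this holds for every $z^{\star}>z_{c}$, letting $z^{\star}\downarrow z_{c}$ gives $(\chi_{\kappa}(z))^{-1}\leq C(z_{c}-z)$, which rearranges to \eqref{eq:Gamma_LB}. The substantive ingredient here is \Cref{lem:DI-Chi}, which in turn rests on the averaged submultiplicativity of \Cref{prop:ASM}; once that is in hand, the finite-to-infinite volume passage is routine and I do not foresee any serious obstacle. The essential feature of the finite torus that makes the argument go through is that the differential inequality is valid for all $s\geq z_{0}$ and not merely on $\co{z_{0}, z_{c}}$: this is what allows the use of the auxiliary $z^{\star}>z_{c}$, whose role is precisely to force $f_{\Lambda}(z^{\star})\to 0$ in the infinite volume limit and thereby pin down the constant term in the integrated inequality.
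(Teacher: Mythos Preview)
Your overall strategy---integrate the finite-volume differential inequality of \Cref{lem:DI-Chi} from $z$ past $z_{c}$, then pass to infinite volume and shrink the upper endpoint---is exactly the paper's approach. However, there is a genuine gap in your infinite-volume passage: the claim that ``monotone convergence applied to the non-negative double sum defining $\chi_{\Lambda,\kappa}$ yields $\chi_{\Lambda,\kappa}(s)\uparrow\chi_{\kappa}(s)$'' is not justified. The differential inequality of \Cref{lem:DI-Chi} is proved for the \emph{torus} susceptibility (the proof uses translation invariance on $\Lambda$), and the torus susceptibility is not obviously monotone in $\Lambda$, nor does it obviously converge to $\chi_{\kappa}$: walks on the torus may wrap around, and for $\kappa>0$ the wrapping can create extra adjacent-edge pairs that inflate the weight relative to the lifted walk in $\Z^{d}$. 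So neither the monotonicity nor the identification of the limit is for free.

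The paper resolves this by introducing the \emph{free} boundary condition susceptibility $\bar\chi_{\Lambda,\kappa}$ (walks confined to a box in $\Z^{d}$), which \emph{is} monotone in $\Lambda$ and converges to $\chi_{\kappa}$ by genuine monotone convergence. For the upper endpoint $z^{\star}>z_{c}$ one only needs $\chi_{\Lambda,\kappa}(z^{\star})\to\infty$, which follows from $\chi_{\Lambda,\kappa}\geq\bar\chi_{\Lambda,\kappa}\to\infty$. For the lower endpoint $z<z_{c}$ one must actually show $\chi_{\Lambda,\kappa}(z)\to\chi_{\kappa}(z)$; the paper does this by bounding $|\chi_{\Lambda,\kappa}(z)-\bar\chi_{\Lambda,\kappa}(z)|$ using averaged submultiplicativity (\Cref{prop:ASM}) together with the subcritical exponential decay of \Cref{lem:expdecay}. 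You should either import this argument or replace the torus by a finite-volume object for which both the differential inequality and the monotone-convergence step can be verified.
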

\begin{proof}
  Note that $\chi_{\Lambda,\kappa}(z)\geq 1$ due to the contribution
  of the zero-step walk.  Fixing $\epsilon>0$ and
  integrating~\eqref{eq:DI-Chi} from $z\geq z_{0}$ to $z_{c}+\epsilon$
  implies
  \begin{equation}
    \label{eq:Chi-DE-1}
    \ob{\chi_{\Lambda,\kappa}(z)}^{-1} -
    \ob{\chi_{\Lambda,\kappa}(z_{c}+\epsilon)}^{-1} \leq
    (1+\kappa)^{\nrp}z_{0}^{-1}(z_{c}-z) + 
    \epsilon (1+\kappa)^{\nrp}z_{0}^{-1}.
  \end{equation}
  Let $\bar\chi_{\Lambda,\kappa}(z)$ be the contribution to the
  infinite volume susceptibility $\chi_{\kappa}(z)$ due to walks
  restricted to the finite box $\Lambda$ without periodic boundary
  conditions. $\bar\chi_{\Lambda,\kappa}(z)$ is monotone in $\Lambda$
  and converges to $\chi_{\kappa}(z)$. Moreover,
  $\chi_{\Lambda,\kappa}(z)$ dominates $\bar\chi_{\Lambda,\kappa}(z)$,
  and hence $\ob{\chi_{\Lambda,\kappa}(z_{c}+\epsilon)}^{-1}$ tends to zero
  as $\Lambda\uparrow\Z^{d}$ by the definition of $z_{c}$.

  To conclude it is enough to prove that
  $\ob{\chi_{\Lambda,\kappa}(z)}^{-1}$ converges to $\ob{\chi_{\kappa}(z)}^{-1}$
  as $\Lambda\uparrow\Z^{d}$, as we can then take
  $\Lambda\uparrow \Z^{d}$ in~\eqref{eq:Chi-DE-1}, followed by
  $\epsilon\to 0$. To see this we note that \Cref{prop:ASM} implies
  \begin{equation*}
    \abs{\bar\chi_{\Lambda,\kappa}(z)-\chi_{\Lambda,\kappa}(z)}\leq
    \sum_{x\in\partial\Lambda}G_{z,\kappa}(x)\chi_{\Lambda,\kappa}(z)
  \end{equation*}
  by reasoning as from \eqref{eq:DIQ-1}--\eqref{eq:DIQ-3}; here
  $\partial\Lambda$ denotes the boundary vertices of
  $\Lambda$. Dividing through by $\chi_{\Lambda,\kappa}(z)$ and using
  that $G_{z,\kappa}(x)$ decays exponentially in $\norm{x}_{\infty}$
  by \Cref{lem:expdecay} proves that $\bar\chi_{\Lambda,\kappa}(z)$ has
  the same limit as $\chi_{\Lambda,\kappa}(z)$. This proves the claim
  as we have already noted that $\bar\chi_{\Lambda,\kappa}(z)$
  converges to $\chi_{\kappa}(z)$.
\end{proof}

\section{A lace expansion for $\kappa$-ASAW}
\label{sec:Lace}

This section derives and analyzes a lace expansion for $\kappa$-ASAW
to prove \Cref{thm:High-D}.  As self-avoiding walk is the special case
$\kappa=0$ of $\kappa$-ASAW, many aspects of our analysis mirror the
SAW case. In what follows we therefore focus on the details specific
to $\kappa\neq 0$, and give precise statements and references for the
details that we omit.

For the reader unfamiliar with the lace expansion, the following
pointers to the literature may be helpful. Our derivation of a lace
expansion for $\kappa$-ASAW in \Cref{sec:Lace-exp} is an adaptation of
the Brydges-Spencer expansion~\cite{BrydgesSpencer}; \cite[Ch.\
3.2--3.3]{Slade} and \cite[Ch.\ 5.2]{MadrasSlade} contain pedagogical
accounts of this method. For the derivation of diagrammatic bounds our
approach in \Cref{sec:Lace-bounds} is similar to \cite{HvdHS}, but the
unfamiliar reader may wish to consult~\cite[Ch.\ 4]{Slade}
or~\cite[Ch.\ 5.4]{MadrasSlade} as a first introduction to the notion
of diagrammatic bounds. It is at this step that we make use of
averaged submultiplicativity. Lastly, for the convergence of the
expansion and proof of \Cref{thm:High-D} we make use of the method of
\cite{HvdHS}. We recall the method of \cite{HvdHS} in \Cref{app:GA},
and our proof of \Cref{thm:High-D} is by a verification of the
hypotheses of this method.

\subsection{Explicit $\kappa$-ASAW interaction}
\label{sec:ingredients}

In this section we give an algebraic formulation of the
$\kappa$-ASAW weight. Let
\begin{equation}
  \label{eq:U-Interaction}
  U_{ij}(\omega) \bydef \indicatorthat{\omega_{i}=\omega_{j}} -
  \kappa\indicatorthat{
    \textrm{$\{\omega_{i},\omega_{i+1},\omega_{j-1},\omega_{j}\}$ is
      a plaquette}}.
\end{equation}

\begin{lemma}
  \label{lem:A-Rep}
  If $\omega\in\walk_{n}$ is an $n$-step walk with $\omega_{0}=o$, then
  $\PP_{n,\kappa}(\omega)$ is proportional to 
\begin{equation}
  \label{eq:Model-Explicit}
  \wt_{\kappa}(\omega) \indicatorthat{\omega\in\saw} = 
  \P_{n}(\omega)
  \mathop{\prod_{0\leq i<j\leq n}}_{j>i+1}(1-U_{ij}(\omega)).
\end{equation}
\end{lemma}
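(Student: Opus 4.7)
\smallskip
\noindent\emph{Proof proposal.} The plan is to verify the identity pointwise in $\omega \in \walk_n$ by a short case analysis, after dividing both sides by $\P_n(\omega)$ (the identity is trivial when $\P_n(\omega)=0$, since both sides then vanish). What remains is to show
\begin{equation*}
(1+\kappa)^{|\adj(\omega)|}\,\indicatorthat{\omega\in\saw}
=
\mathop{\prod_{0\leq i<j\leq n}}_{j>i+1}\bigl(1-U_{ij}(\omega)\bigr).
\end{equation*}

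First I would dispose of the case $\omega\notin\saw$. By the convention that consecutive vertices of a walk are distinct, any collision $\omega_i=\omega_j$ with $i<j$ forces $j\geq i+2$, so the pair $(i,j)$ contributes to the product. For that pair, $\indicatorthat{\omega_i=\omega_j}=1$ while the set $\{\omega_i,\omega_{i+1},\omega_{j-1},\omega_j\}$ has at most three distinct elements and therefore cannot be a plaquette (plaquettes have four distinct vertices). Hence $1-U_{ij}(\omega)=0$ and the whole product vanishes, matching the left-hand side.

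Second, suppose $\omega\in\saw$. Then $\indicatorthat{\omega_i=\omega_j}=0$ for every $i<j$, so
\begin{equation*}
1-U_{ij}(\omega)=1+\kappa\,\indicatorthat{\{\omega_i,\omega_{i+1},\omega_{j-1},\omega_j\}\text{ is a plaquette}}.
\end{equation*}
Each factor is either $1$ or $1+\kappa$, so the product equals $(1+\kappa)^{N(\omega)}$, where $N(\omega)$ is the number of index pairs $(i,j)$ with $0\leq i<j\leq n$, $j>i+1$, for which the four listed vertices form a plaquette. It remains to identify $N(\omega)$ with $|\adj(\omega)|$.

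To do so I would exhibit a bijection between the set of such $(i,j)$ and the set of unordered pairs of distinct edges of $\omega$ whose four endpoints form a plaquette, which is exactly $\adj(\omega)$ by the definition of adjacency. Since $\omega$ is self-avoiding, its edges $e_k=\{\omega_k,\omega_{k+1}\}$ for $0\leq k\leq n-1$ are all distinct, so $(i,j)\mapsto\{e_i,e_{j-1}\}$ is a bijection from pairs with $0\leq i<j-1\leq n-1$ onto unordered pairs of distinct edges of $\omega$; the plaquette condition on $(i,j)$ translates under this bijection precisely to $\{e_i,e_{j-1}\}\in\adj(\omega)$. Note that the case $j=i+2$ gives consecutive edges $e_i,e_{i+1}$, whose endpoints $\{\omega_i,\omega_{i+1},\omega_{i+1},\omega_{i+2}\}$ only have three distinct elements, so these pairs never contribute on either side, which is consistent. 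This yields $N(\omega)=|\adj(\omega)|$ and completes the identity. The argument is essentially routine bookkeeping; the only mildly subtle point is checking that the restriction $j>i+1$ in the product, together with the requirement that plaquettes have four distinct vertices, correctly excludes the diagonal and consecutive-edge pairs without losing any genuine adjacent pair.
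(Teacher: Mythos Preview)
Your proof is correct and follows the same two-case approach as the paper: the collision indicator in $U_{ij}$ forces the product to vanish when $\omega$ is not self-avoiding, and for $\omega\in\saw$ the plaquette indicator produces exactly the factors $(1+\kappa)$ counted by $|\adj(\omega)|$. Your explicit bijection $(i,j)\mapsto\{e_i,e_{j-1}\}$ between index pairs and unordered pairs of walk-edges spells out what the paper's one-sentence justification (``each attraction is counted with weight $1+\kappa$ when $\omega_i$ is the first visit to a plaquette and $\omega_j$ is the last'') leaves implicit, but the underlying argument is identical.
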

\begin{proof}
  Recall that $\omega_{i+1}\neq\omega_{i}$ by the definition of a
  walk.  As $\omega_{i}=\omega_{j}$ precludes
  $\{\omega_{i},\omega_{i+1},\omega_{j-1},\omega_{j}\}$ being a
  plaquette, the first indicator in~\eqref{eq:U-Interaction} encodes
  the self-avoidance constraint between $\omega_{i}$ and $\omega_{j}$
  for $j>i+1$. The second indicator encodes the self-attraction
  between edges; each attraction is counted with weight $1+\kappa$
  when $\omega_{i}$ is the first visit to a plaquette and $\omega_{j}$
  is the last visit to the plaquette, which requires $j\geq i+3$.
\end{proof}

\subsection{Derivation of the lace expansion}
\label{sec:Lace-exp}

In this section we derive a lace expansion for $\kappa$-ASAW using the
so-called algebraic method~\cite{Slade}. A similar expansion for a
vertex attraction was derived in~\cite{Ueltschi}.

For $a\leq b$ integers let
$\cb{a,b} \bydef \{a,a+1,\dots, b\}$, and
  $\cb{a,a-1}=\emptyset$. An \emph{edge} $\{i,j\}$ is an element of
${\cb{a,b}\choose 2}$ with $\abs{i-j}>1$; $\{i,j\}$ will be
abbreviated $ij$. A \emph{graph} $G$ on $\cb{a,b}$ is a
(possibly empty) set of edges, and $G$ is \emph{connected} if
for all $j\in\cb{a+1,b-1}$ there are $i<j<k$ such that $ik$ is an edge
in $G$. Let $\graph_{\cb{a,b}}$ denote the set of graphs on
$\cb{a,b}$, and $\graphc_{\cb{a,b}}$ the set of connected graphs. Let
$\omega$ be a walk and define
\begin{align}
  \label{eq:K}
  K_{\cb{a,b}}(\omega) &\bydef \sum_{G\in\graph_{\cb{a,b}}} \prod_{ij\in
                         G}-U_{ij}(\omega), \quad\textrm{and} \\
  \label{eq:J}
  J_{\cb{a,b}}(\omega) &\bydef \sum_{G\in\graphc_{\cb{a,b}}} \prod_{ij\in G}-U_{ij}(\omega).
\end{align}
These definitions imply
$K_{\cb{a,a}}=K_{\cb{a,a+1}}=J_{\cb{a,a}}=J_{\cb{a,a+1}}=1$ due to the
contribution of the empty graph.

Expanding the product of $(1-U_{ij})$ over $ij$
in~\eqref{eq:Model-Explicit} and using~\eqref{eq:ASAW2PT} implies the
two-point function can be written in terms of graphs:
\begin{equation}
  \label{eq:K-G}
  G_{z,\kappa}(x) = \sum_{n=0}^{\infty}\sum_{\omega\in\walk_{n}(x)} z^{n}\P_{n}(\omega)
  K_{\cb{0,n}}(\omega).
\end{equation}
In what follows, we manipulate~\eqref{eq:K-G} by rewriting the term
$K_{\cb{0,n}}(\omega)$ to obtain a convolution equation for
$G_{z,\kappa}$. The first step is a well-known lemma.  For notational
convenience we will use the convention that $\sum_{j=b+k}^{b}f(j)=0$
if $k\geq 1$. 
\begin{lemma}[Lemma~5.2.2 of~\cite{MadrasSlade}]
  \label{lem:Graph-Recursion}
  For any walk $\omega$ and $a<b$,
  \begin{equation}
    \label{eq:Graph-Recursion}
    K_{\cb{a,b}}(\omega) = K_{\cb{a+1,b}}(\omega) +
    \sum_{j=a+2}^{b}J_{\cb{a,j}}(\omega)K_{\cb{j,b}}(\omega).
  \end{equation}
\end{lemma}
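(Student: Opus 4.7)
The plan is a standard decomposition argument for the graphs underlying $K_{\cb{a,b}}(\omega)$. I partition $\graph_{\cb{a,b}}$ according to the local structure at the vertex $a$: either $a$ is isolated in $G$ (incident to no edge) or it is not. In the first case, $G$ is naturally identified with a graph on $\cb{a+1,b}$, and the factor $\prod_{ik\in G}(-U_{ik}(\omega))$ is unchanged under this identification, so the total contribution is $K_{\cb{a+1,b}}(\omega)$.

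In the second case, I define $j=j(G)\in\cb{a+2,b}$ to be the largest index such that the restriction of $G$ to edges with both endpoints in $\cb{a,j}$ is a connected graph on $\cb{a,j}$ in the sense of the definition of $\graphc$. Existence of $j$ follows because, since $a$ is covered, some edge $a\ell\in G$ already forces the restriction to $\cb{a,\ell}$ to be connected, so $j\geq\ell\geq a+2$. Setting
\[
  G_{1}\bydef\{ik\in G:k\leq j\},\qquad G_{2}\bydef\{ik\in G:i\geq j\},
\]
I claim $G=G_{1}\sqcup G_{2}$, that $G_{1}\in\graphc_{\cb{a,j}}$, and that $G_{2}\in\graph_{\cb{j,b}}$. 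The disjointness is automatic once the decomposition exhausts $G$, so the content of the claim is that no edge $ik\in G$ has $i<j<k$. The key observation for this is the maximality of $j$: any such edge would, together with the connectedness of the restriction to $\cb{a,j}$, exhibit the restriction of $G$ to $\cb{a,k}$ as connected (the edge $ik$ covers every index in $(i,k)\supseteq\{j\}$, and indices in $(a,j)$ are already covered by edges with endpoints in $\cb{a,j}$), contradicting maximality.

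Given this, the map $G\mapsto(G_{1},G_{2})$ is a bijection from graphs with $j(G)=j$ onto $\graphc_{\cb{a,j}}\times\graph_{\cb{j,b}}$; the inverse sends $(G_{1},G_{2})$ to $G_{1}\cup G_{2}$, and a short check using the same maximality reasoning verifies $j(G_{1}\cup G_{2})=j$. Because $G_{1}$ and $G_{2}$ share no edge, the weight factorizes as
\[
  \prod_{ik\in G}(-U_{ik}(\omega)) = \prod_{ik\in G_{1}}(-U_{ik}(\omega))\cdot\prod_{ik\in G_{2}}(-U_{ik}(\omega)),
\]
so summing over graphs with $j(G)=j$ produces $J_{\cb{a,j}}(\omega)K_{\cb{j,b}}(\omega)$, and then summing over $j\in\cb{a+2,b}$ yields the claimed identity. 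The main obstacle is the maximality argument that forbids an edge of $G$ from straddling the cut point $j$; once this is in hand the decomposition and factorization are purely formal.
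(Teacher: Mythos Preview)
Your argument is correct and is exactly the standard decomposition used in the cited reference (Madras--Slade, Lemma~5.2.2); the paper does not give its own proof but simply invokes that lemma. The key step you identify---that maximality of $j$ forbids any edge $ik$ with $i<j<k$, since such an edge together with the connected restriction to $\cb{a,j}$ would make the restriction to $\cb{a,k}$ connected---is precisely the crux of the Madras--Slade proof, and your verification of the bijection $G\leftrightarrow(G_{1},G_{2})$ is complete.
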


For $x\in\Z^{d}$, define $\Pi_{z,\kappa}(x)$ by
\begin{equation}
  \label{eq:Pi-pre}
  \Pi_{z,\kappa}(x)\bydef\sum_{n=2}^{\infty} \sum_{\omega\in\walk_{n}(x)}
  z^{n}\P_{n}(\omega) J_{\cb{0,n}}(\omega).
\end{equation}
It is not \emph{a priori} clear that the series defining
$\Pi_{z,\kappa}$ is convergent. When convergence is unknown we will
interpret the series and the formulas in which it occurs as formulas
relating formal power series in $z$. We recall that for
$f,g\colon\Z^{d}\to\R$ the convolution of $f$ and $g$ is
$(f\ast g)(x) \bydef \sum_{y\in\Z^{d}}f(y)g(x-y)$, and we also recall
that the step distribution $D$ was introduced in~\eqref{eq:Steps}.
\begin{proposition}[Theorem~5.2.3 of~\cite{MadrasSlade}]
  \label{prop:Recursion}
  As formal power series in $z$,
  \begin{equation}
    \label{eq:Recursion}
    G_{z,\kappa}(x) =  \indicatorthat{x=o} + (zD\ast G_{z,\kappa})(x) +
    (\Pi_{z,\kappa}\ast G_{z,\kappa})(x).
  \end{equation}
  This is an equality between functions when all terms of
    \eqref{eq:Recursion} are absolutely convergent in $z$. 
\end{proposition}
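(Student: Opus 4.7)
The plan is to follow the standard algebraic-method derivation, treating \eqref{eq:Recursion} first as an identity of formal power series in $z$ and then upgrading to a pointwise identity on the domain of absolute convergence. Starting from the graph representation \eqref{eq:K-G}, the $n=0$ contribution equals $\indicatorthat{x=o}$ since $K_{\cb{0,0}}=1$ and $\walk_{0}(x)$ is nonempty only when $x=o$. For $n\geq 1$, I would apply \Cref{lem:Graph-Recursion} with $a=0$, $b=n$, yielding
\[
K_{\cb{0,n}}(\omega) = K_{\cb{1,n}}(\omega) + \sum_{j=2}^{n} J_{\cb{0,j}}(\omega)\,K_{\cb{j,n}}(\omega),
\]
and handle the two resulting sums separately.

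For the $K_{\cb{1,n}}$ piece, the key observation is that $K_{\cb{1,n}}(\omega)$ depends only on the subwalk $\omega_{\cb{1,n}}$. Conditioning on the first step $\omega_{1}=y$ and using the i.i.d.\ structure of the increments to factor $\P_{n}(\omega)=D(y)\,\P_{n-1}(\trans_{-y}\omega_{\cb{1,n}})$, I would sum over the tail via \eqref{eq:K-G} to obtain
\[
\sum_{n\geq 1}\sum_{\omega\in\walk_{n}(x)} z^{n}\P_{n}(\omega) K_{\cb{1,n}}(\omega)
= \sum_{y\in\Z^{d}} zD(y)\, G_{z,\kappa}(x-y) = (zD\ast G_{z,\kappa})(x).
\]

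For the double sum, I would split each walk $\omega\in\walk_{n}(x)$ at the index $j$ into $\omega^{1}\in\walk_{j}(y)$ and a translate $\trans_{-y}\omega^{2}\in\walk_{n-j}(x-y)$ where $y=\omega_{j}$. The crucial factorization step is that $J_{\cb{0,j}}(\omega)$ depends only on $\omega_{0},\ldots,\omega_{j}$ (each $U_{ik}$ with $0\leq i<k\leq j$ involves only indices in that range via the quadruple $\{\omega_{i},\omega_{i+1},\omega_{k-1},\omega_{k}\}$) and that $K_{\cb{j,n}}(\omega)$ depends only on $\omega_{j},\ldots,\omega_{n}$ by the analogous argument. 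Combined with $\P_{n}(\omega)=\P_{j}(\omega^{1})\,\P_{n-j}(\trans_{-y}\omega^{2})$ and the translation invariance of $J_{\cb{0,j}}$ and $K_{\cb{j,n}}$ as functions of the underlying walks, interchanging the order of summation yields
\[
\sum_{y\in\Z^{d}} \left(\sum_{j\geq 2}\sum_{\omega^{1}\in\walk_{j}(y)}\!\! z^{j}\P_{j}(\omega^{1}) J_{\cb{0,j}}(\omega^{1})\right)\!\! \left(\sum_{m\geq 0}\sum_{\omega^{2}\in\walk_{m}(x-y)}\!\! z^{m}\P_{m}(\omega^{2}) K_{\cb{0,m}}(\omega^{2})\right),
\]
which is exactly $(\Pi_{z,\kappa}\ast G_{z,\kappa})(x)$ in view of \eqref{eq:Pi-pre} and \eqref{eq:K-G}.

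The main obstacle is bookkeeping rather than genuine difficulty: one must carefully verify that no $U_{ik}$ factor in $J_{\cb{0,j}}$ or $K_{\cb{j,n}}$ straddles the split index $j$ (so that the two diagrammatic factors really are independent functions of the two subwalks), and that the reordering of infinite sums is legitimate. The former is immediate from the fact that each plaquette encoded by $U_{ik}$ uses only $\omega_{i},\omega_{i+1},\omega_{k-1},\omega_{k}$; the latter is automatic at the level of formal power series in $z$, while the absolute-convergence upgrade to an equality of functions is justified whenever $\chi_{\kappa}(|z|)<\infty$ and $\Pi_{z,\kappa}$ converges absolutely, since then all series involved are dominated termwise by convergent sums.
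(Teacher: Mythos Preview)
Your proposal is correct and follows essentially the same route as the paper's proof: both apply \Cref{lem:Graph-Recursion} to $K_{\cb{0,n}}$, then use the factorization $\P_{n}(\omega)=\P_{j}(\omega_{\cb{0,j}})\P_{n-j}(\omega_{\cb{j,n}})$ together with the observation that $U_{ij}(\omega)$ depends only on $\omega_{\ell}$ for $i\leq\ell\leq j$ to split the sum into the $zD\ast G_{z,\kappa}$ and $\Pi_{z,\kappa}\ast G_{z,\kappa}$ pieces. Your treatment of the formal-power-series versus absolute-convergence distinction is slightly more explicit than the paper's, but the argument is the same.
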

\begin{proof}
  Consider the contribution of $n$-step walks to
  $G_{z,\kappa}(\omega)$, i.e.,
 \begin{equation}
   \label{eq:e2}
    \sum_{\omega\in\saw_{n}(x)}z^{n}\wt_{\kappa}(\omega) =
    \sum_{\omega\in\walk_{n}(x)}z^{n}\P_{n}(\omega)K_{\cb{0,n}}(\omega).
  \end{equation}
  Since $U_{ij}(\omega)$ only depends on those $\omega_{\ell}$ with
  $i\leq\ell\leq j$, $K_{\cb{0,j}}(\omega)J_{\cb{j,n}}(\omega)$ is
  equal to
  $ K_{\cb{0,j}}(\omega_{\cb{0,j}})J_{\cb{j,n}}(\omega_{\cb{j,n}})$.
  Hence, by using~\eqref{eq:Graph-Recursion} to rewrite the factor
  $K_{\cb{0,n}}$ in~\eqref{eq:e2}, the right-hand side
  of~\eqref{eq:e2} can be rewritten as
  \begin{align}
    \label{eq:e3}
      &\hspace{-5mm} \sum_{\omega\in\walk_{n}(x)}
    zD(\omega_{1}-\omega_{0})z^{n-1}\P_{n-1}(\omega_{\cb{1,n}})
       K_{\cb{1,n}}(\omega) 
    \\\nonumber &+
      \sum_{\omega\in\walk_{n}(x)} 
\sum_{j=2}^{n}z^{j}\P_{j}\ob{\omega_{\cb{0,j}}}J_{\cb{0,j}}(\omega_{\cb{0,j}}) 
                  z^{n-j}\P_{n-j}\ob{\omega_{\cb{j,n}}}K_{\cb{j,n}}(\omega_{\cb{j,n}}).
  \end{align}

  To conclude, sum~\eqref{eq:e2} over all $n$. The left-hand side is
  $G_{z,\kappa}(x)$. For~$n=0$, the right-hand side is
  $\indicatorthat{x=o}$, the contribution of the zero-step walk. For
  $n\geq 1$ we use \eqref{eq:e3}. The
  factors of $G_{z,\kappa}$ arise by~\eqref{eq:K-G} and the
  translation invariance of $\wt_{\kappa}$. The term $\Pi_{z,\kappa}$
  arises from the sum over $j$ by~\eqref{eq:Pi-pre}.
  This proves~\eqref{eq:Recursion} in the sense of formal power series,
  as the coefficient of $z^{n}$ consists of only finitely many terms
  for all $n$.
\end{proof}

\subsection{Representation of $\pi^{(m)}_{z,\kappa}(x)$}
\label{sec:Pi-Refined}

To make use of \Cref{prop:Recursion} requires control on $\Pi_{z,\kappa}$. 
Obtaining this control is at the heart of the lace expansion, and
requires some further definitions. 

\begin{definition}
  \label{def:Lace}
  A connected graph $G$ is a \emph{lace} if for any edge $ij\in G$
  the graph $G\setminus ij$ is not connected. Let
  $\lace_{\cb{a,b}}^{(m)}$ denote the set of laces on $\cb{a,b}$ with
  $m$ edges, and
  $\lace_{\cb{a,b}} = \bigsqcup_{m\geq 1}\lace_{\cb{a,b}}^{(m)}$.
\end{definition}

\begin{lemma}[p.126-p.128 of~\cite{MadrasSlade}]
  \label{lem:Lace-Algorithm}
  For $G\in\graphc_{\cb{a,b}}$, let $\lacemap(G)$ be the graph
  with edges $\{s_{i}t_{i}\}$ defined by $s_{1}=a$,
  $t_{1} = \max \{t \mid s_{1}t\in G\}$, and
  \begin{equation*}
    \label{eq:Lace-Algorithm}
    t_{i+1} = \max\{ t \mid \textrm{$\exists$ $s<t_{i}$ such that
      $st\in G$}\}, \quad s_{i+1} = \min\{s \mid st_{i+1}\in G\}.
  \end{equation*}
  Then (i) $\lacemap(G)$ is a lace and (ii) if
  $\lacemap(G)=\lacemap(H)$ then $\lacemap(G\cup
  H)=\lacemap(G)$.
\end{lemma}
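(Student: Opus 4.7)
The plan is to establish a structural description of the edges $s_1 t_1, \dots, s_N t_N$ produced by $\lacemap(G)$ and then read off both conclusions from it. Specifically, I claim
\[
a = s_1 < s_2 < \cdots < s_N, \qquad t_1 < t_2 < \cdots < t_N = b, \qquad t_{i-1} \leq s_{i+1} < t_i,
\]
where by convention $t_{0} := a$. All four inequalities follow directly from the definitions. The strict increase $t_{i+1} > t_i$ (which also forces termination at $t_N = b$) holds because whenever $t_i < b$, connectedness of $G$ provides an edge $sk \in G$ with $s < t_i < k$, so $k$ is a candidate for $t_{i+1}$ and $t_{i+1} \geq k > t_i$. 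The inequality $s_{i+1} < t_i$ is immediate since the edge realizing the max $t_{i+1}$ has source strictly less than $t_i$. The separation $s_{i+1} \geq t_{i-1}$ holds because otherwise $s_{i+1} < t_{i-1}$ would place the edge $s_{i+1} t_{i+1}$ in the candidate set defining $t_i$, giving $t_i \geq t_{i+1}$ and contradicting $t_{i+1} > t_i$. Chaining, $s_i < t_{i-1} \leq s_{i+1}$, so the $s_i$ are strictly increasing as well.

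For assertion (i), $\lacemap(G)$ is connected on $\cb{a,b}$ because the open intervals $(s_i, t_i)$ cover every $j \in \cb{a+1, b-1}$: consecutive intervals overlap via $s_{i+1} < t_i$, the first starts at $s_1 = a$, and the last ends at $t_N = b$. For minimality, suppose an edge $s_\ell t_\ell$ is removed. The interleaving forces any integer $j$ with $t_{\ell-1} \leq j \leq s_{\ell+1}$ to be covered only by $(s_\ell, t_\ell)$ among the $\lacemap(G)$-intervals, since intervals with $i < \ell$ are contained in $(a, t_{\ell-1})$ and those with $i > \ell$ in $(s_{\ell+1}, b)$. Because $s_{\ell+1} \geq t_{\ell-1}$ such a $j$ exists and lies in $\cb{a+1,b-1}$, so removing $s_\ell t_\ell$ breaks connectivity. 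The boundary cases $\ell = 1$ and $\ell = N$ are handled symmetrically using $s_2 > a$ (otherwise $a t_2 \in G$ with $t_2 > t_1$ would contradict the maximality of $t_1$) and $t_{N-1} < b$.

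For assertion (ii), induct on the step index to show that the algorithm run on $G$, on $H$, and on $G \cup H$ produces identical pairs $(s_j, t_j)$ at every step. The base case $s_1 = a$ is trivial, and $t_1^{(G \cup H)} = \max\{t : at \in G \cup H\} = \max\bigl(t_1^{(G)}, t_1^{(H)}\bigr) = t_1$ by the hypothesis $\lacemap(G) = \lacemap(H)$. For the inductive step, if the first $i$ pairs agree, then $t_i$ is the same for all three graphs, and
\[
t_{i+1}^{(G \cup H)} = \max\{t : \exists\, s < t_i,\ st \in G \cup H\} = \max\bigl(t_{i+1}^{(G)}, t_{i+1}^{(H)}\bigr) = t_{i+1},
\]
since a max over a union equals the max of the individual maxes; analogously $s_{i+1}^{(G \cup H)}$ equals the common value $s_{i+1}$ by the dual identity for min. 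Termination occurs simultaneously for all three at $t_N = b$.

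The only genuine obstacle is justifying the non-adjacent separation $s_{i+1} \geq t_{i-1}$ in the structural claim; once this (and the strict monotonicity it entails) is in place, part (i) reduces to the interval-covering bookkeeping above and part (ii) is nearly immediate from the max-over-union identity.
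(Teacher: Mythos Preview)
Your proof is correct. The paper does not give its own proof of this lemma; it is stated with a citation to Madras--Slade, pp.~126--128, and used as a black box. Your argument is essentially the standard one from that reference: establish the interleaving structure $a=s_1<s_2<\cdots<s_N$, $t_1<\cdots<t_N=b$, $t_{i-1}\le s_{i+1}<t_i$, deduce that the intervals $(s_i,t_i)$ cover $\cb{a+1,b-1}$ with each internal point lying in at most two of them (so removal of any single edge disconnects), and then observe for~(ii) that the defining max/min at each step commutes with taking the union $G\cup H$. One small organisational remark: the strict inequality $s_2>s_1=a$ is needed for the monotonicity of the $s_i$ and for the boundary case $\ell=1$ of minimality; you do supply the correct reason (if $s_2=a$ then $at_2\in G$ with $t_2>t_1$ contradicts the definition of $t_1$), but it appears only later in the boundary discussion rather than alongside the chaining $s_i<t_{i-1}\le s_{i+1}$, which as written covers only $i\ge 2$.
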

\Cref{fig:lacemap,fig:Diagram-Lace} 
depict a graph $G$ and its associated lace $\lacemap(G)$.
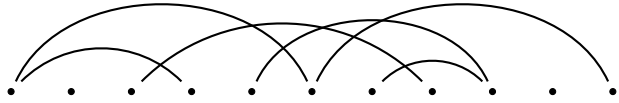
\begin{figure}[h]
  \centering
    \begin{tikzpicture}[scale=.8]
    \foreach \x in {0,1,...,10}
    \draw[fill=black!100] (\x,0) circle (.5mm) node (n\x) {} ;
    \draw[black,thick] (n0) edge [bend left=65] (n5);
    \draw[black,thick] (n4) edge [bend left=65] (n8);
    \draw[black,thick] (n5) edge [bend left=65] (n10);

    \draw[black,thick] (n0) edge [bend left=45] (n3);
    \draw[black,thick] (n2) edge [bend left=45] (n7);
    \draw[black,thick] (n6) edge [bend left=45] (n8);
  \end{tikzpicture}
  \caption{An illustration of the graph $G$ with edge set
    $\{ \{0,3\}, \{0,5\}, \{2,7\}, \{4,8\}, \{5,10\},
    \{6,8\}\}$. The vertices are labelled $0,1,\dots, 10$ from left
    to right.}
  \label{fig:lacemap}
\end{figure}

$\graph_{\cb{a,b}}$ is partially ordered by inclusion, and
\Cref{lem:Lace-Algorithm} implies that for every lace $L$ there is a
maximal graph $G$ such that $\lacemap(G)=L$. The edges of the maximal
graph $G$ can be partitioned into $L\sqcup \comp(L)$, where $\comp(L)$
are called the \emph{compatible edges}.  Explicitly, $ij\in \comp(L)$
if and only if $ij\notin L$ and $\lacemap(L\cup \{ij\})=L$. It follows
that
\begin{equation}
  \label{eq:Connected-Lace}
  J_{\cb{a,b}}(\omega) = \sum_{L\in\lace_{\cb{a,b}}}\prod_{ij\in
    L}-U_{ij}(\omega) \prod_{i'j'\in\comp(L)}(1-U_{i'j'}(\omega)).
\end{equation}

Define $J^{(m)}_{\cb{a,b}}$ to be the contribution
to~\eqref{eq:Connected-Lace} given by $L\in\lace_{\cb{a,b}}^{(m)}$,
and let
\begin{equation}
  \label{eq:Pi-n}
  \pi^{(m)}_{z,\kappa}(x)  
  \bydef \sum_{n=2}^{\infty} \sum_{\omega\in\walk_{n}(x)} z^{n}\P_{n}(\omega)
  J^{(m)}_{\cb{0,n}}(\omega). 
\end{equation}
These definitions imply that as formal power series
\begin{equation}
  \label{eq:Pi}
  \Pi_{z,\kappa}(x) = \sum_{m\geq 1}\pi^{(m)}_{z,\kappa}(x),
\end{equation}
and our approach to controlling $\Pi_{z,\kappa}$ will be to estimate
the terms $\pi^{(m)}_{z,\kappa}$.  This will be done by re-expressing
$\pi^{(m)}_{z,\kappa}$ in terms of sums of collections of walks
subject to conditional interactions $\wt_{\kappa}(\cdot\,;\eta)$ and
estimating these sums.  The remainder of this section introduces the
definitions needed to make the reformulation of $\pi^{(m)}_{z,\kappa}$
precise. As noted previously, the arguments are similar to standard
ones~\cite{Slade,MadrasSlade}; see also~\cite{Ueltschi}.

\begin{figure}[h]
  \centering
  \begin{tikzpicture}[scale=.8]
    \foreach \x in {0,1,...,10}
    \draw[fill=black!100] (\x,0) circle (.5mm) node (n\x) {} ;
    \draw[black,thick] (n0) edge [bend left=65] (n5);
    \draw[black,thick] (n4) edge [bend left=65] (n8);
    \draw[black,thick] (n5) edge [bend left=65] (n10);
    \draw[black,thick,decorate,decoration={brace,mirror,raise=4pt}]
    ([xshift=4pt]n0.west) --node[below=3mm]{$n_{1}$}
    ([xshift=-5pt]n4.east);
    \draw[black,thick,decorate,decoration={brace,mirror,raise=4pt}]
    ([xshift=4pt]n4.west) --node[below=3mm]{$n_{2}$} ([xshift=-5pt]n5.east);
    \draw[black,thick,decorate,decoration={brace,mirror,raise=4pt}]
    ([xshift=4pt]n5.west) --node[below=3mm]{$n_{4}$} ([xshift=-5pt]n8.east);
    \draw[black,thick,decorate,decoration={brace,mirror,raise=4pt}]
    ([xshift=4pt]n8.west) --node[below=3mm]{$n_{5}$} ([xshift=-5pt]n10.east);
  \end{tikzpicture}
  \caption{The lace $\lacemap(G)$ associated to the graph $G$ from
    \Cref{fig:lacemap} has the edge set
    $\{ \{0,5\},\{4,8\},\{5,10\}\}$. $\lacemap(G)$ subdivides the
    vertex set into intervals of length $n_{1}=4$, $n_{2}=1$,
    $n_{3}=0$, $n_{4}=3$, and $n_{5}=2$.}
  \label{fig:Diagram-Lace}
\end{figure}
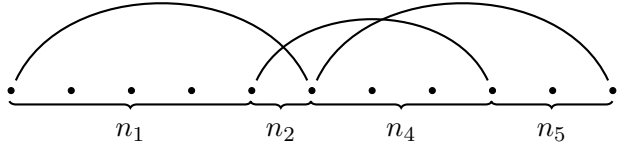

A lace $L\in \lace^{(m)}_{\cb{0,n}}$ partitions $\cb{0,n}$ into $2m-1$
intervals with disjoint interiors, and this induces a composition of
$n$ into $2m-1$ parts. See \Cref{fig:Diagram-Lace}. This composition
yields a vector $\V{n} \bydef (n_{1},n_{2},\dots, n_{2m-1})$ of
interval lengths with the properties
\begin{equation}
  \label{eq:Intervals}
  \begin{aligned}
    n_{j}&\geq 0, \qquad j\in \{3,5,7,\dots, 2m-3\}\\
    n_{j}& \geq 1, \qquad j\in \cb{0,2m-1}\setminus\{3,5,7,\dots,2m-3\},
\end{aligned}
\end{equation}
and $n=\textstyle{\sum_{j=1}^{2m-1}}n_{j}$. Conversely, to each such
$\V{n}$ there is associated a unique lace graph~\cite[Exercise
3.6]{Slade}.

Let $M_{i} \bydef \sum_{j=1}^{i}n_{j}$. By our convention for sums,
$M_{k}\bydef 0$ for $k\leq 0$. Define
$I_{j} \bydef \oc{M_{j-1},M_{j}}$ for $j=1,2,3,\dots,2m-1$ and
$I_{k}\bydef\emptyset$ for $k\leq 0$. Let
$\comp_{i}(L) \bydef \{i'j'\mid j'\in I_{i}\}$ be the set of
compatible edges $i'j'$ whose right endpoint $j'$ is in
$I_{i}$. Observing that $\comp(L)=\bigsqcup_{i=1}^{2m-1}\comp_{i}(L)$
since the intervals $I_{i}$ are a partition of $\oc{0,n}$, these
definitions imply that, for any lace $L\in\lace^{(m)}_{\cb{0,n}}$ and
walk $\omega\in\walk_{n}$,
\begin{equation}
  \label{eq:U-Edge}
  \prod_{ij\in\comp(L)}(1-U_{ij}(\omega)) 
  =
  \prod_{i=1}^{2m-1}\prod_{i'j'\in\comp_{i}(L)}(1-U_{i'j'}(\omega)).
\end{equation}

Because $\comp_{i}(L)$ consists of edges whose right endpoint is in
$I_{i}$, the second product on the right-hand side of
\eqref{eq:U-Edge} describes an interaction between the vertices
$\{\omega_{j}\}_{j\in I_{i}}$ and the vertices
$\{\omega_{j'}\}_{j'\leq M_{i}}$.  \Cref{prop:Lace-Edge} below
controls this interaction; achieving this control requires
characterising the edges $ij\in\comp(L)$.

Recalling that $M_{k}=0$ and $I_{k}=\emptyset$ for $k\leq 0$, the
compatible edges can be characterized as follows. Let
  $M^{k}_{2k-2}=M_{2k-2}$ if $k\neq 1$, and $M^{1}_{0}=\emptyset$. For $k\geq 0$,
$i<j$, and $m\geq 1$:
\begin{enumerate}
\item if $j\in I_{2k+1}$, then
  $i\in \bigcup_{\ell=2k-1}^{2k+1}I_{\ell}\cup\{M_{2k-2}\}$, and
  $j=M_{2m-1}=n$ implies $i>M_{2m-3}$.
\item if $j\in I_{2k+2}$, then
  $i\in \bigcup_{\ell=2k-1}^{2k+2}I_{\ell}\cup\{M_{2k-2}\}$, and
  $j=M_{2k+2}$ implies $i>M_{2k-1}$.
\end{enumerate}
This classification follows by considering the procedure defined in
\Cref{lem:Lace-Algorithm}. For $m=1$ the only incompatible edge is
$0n$. For $m\geq 2$ the constraints fall into three types: for
$I_{2k+2}$, $k\geq 1$; for $I_{2k+1}$, $k\leq m-2$; and for
$I_{2},I_{3},I_{4}$ and $I_{2m-1}$.
The case $I_{2}$ is distinguished because the only endpoint of a lace
between $s_{1}$ and $t_{1}$ is $s_{2}$, while for $i\geq 2$ we have
$s_{i}<t_{i-1}\leq s_{i+1}<t_{i}$. This distinction is manifested
above by the triviality of the intervals $I_{k}$ for $k\leq
0$. $I_{3}$ and $I_{4}$ are distinguished as the initial vertex of a
compatible edge cannot be $0$, and $I_{2m-1}$ is distinguished for a
similar reason. See \Cref{fig:Diagram-Lace-2}.

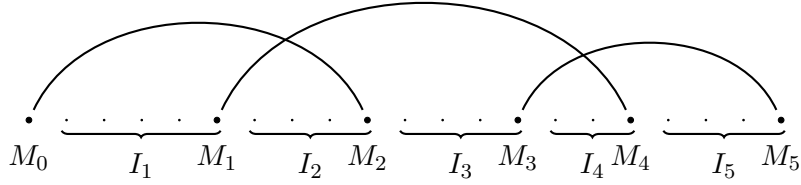
\begin{figure}[h]
  \centering
  \begin{tikzpicture}[scale=.5]
    \foreach \x in {0,1,...,20}
    \draw[fill=black!100] (\x,0) circle (.2mm) node (n\x) {} ;
    \draw[fill=black!100] (n0) circle (.75mm) {};
    \draw[fill=black!100] (n5) circle (.75mm) {};
    \draw[fill=black!100] (n9) circle (.75mm) {};
    \draw[fill=black!100] (n13) circle (.75mm) {};
    \draw[fill=black!100] (n16) circle (.75mm) {};
    \draw[fill=black!100] (n20) circle (.75mm) {};
    \draw[black,thick] (n0) edge [bend left=65] (n9);
    \draw[black,thick] (n5) edge [bend left=65] (n16);
    \draw[black,thick] (n13) edge [bend left=65] (n20);
    \draw[black,thick,decorate,decoration={brace,mirror,raise=4pt}]
    ([xshift=4pt]n1.west) --node[below=3mm]{$I_{1}$}
    ([xshift=-5pt]n5.east);
    \draw[black,thick,decorate,decoration={brace,mirror,raise=4pt}]
    ([xshift=4pt]n6.west) --node[below=3mm]{$I_{2}$} ([xshift=-5pt]n9.east);
    \draw[black,thick,decorate,decoration={brace,mirror,raise=4pt}]
    ([xshift=4pt]n10.west) --node[below=3mm]{$I_{3}$} ([xshift=-5pt]n13.east);
    \draw[black,thick,decorate,decoration={brace,mirror,raise=4pt}]
    ([xshift=4pt]n14.west) --node[below=3mm]{$I_{4}$}
    ([xshift=-5pt]n16.east);
    \draw[black,thick,decorate,decoration={brace,mirror,raise=4pt}]
    ([xshift=4pt]n17.west) --node[below=3mm]{$I_{5}$}
    ([xshift=-5pt]n20.east);
    \node[below=2mm] at (n0) {$M_{0}$};
    \node[below=2mm] at (n5) {$M_{1}$};
    \node[below=2mm] at (n9) {$M_{2}$};
    \node[below=2mm] at (n13) {$M_{3}$};
    \node[below=2mm] at (n16) {$M_{4}$};
    \node[below=2mm] at (n20) {$M_{5}$};
  \end{tikzpicture}
  \caption{A lace $\{ s_{1}t_{1},s_{2}t_{2},s_{3}t_{3}\}$ with
    $s_{1}=M_{0}$, $s_{2}=M_{1}$, and $s_{3}=M_{3}$. Its corresponding
    intervals $I_{i}$, $i=1,\dots, 5$ are also illustrated.  }
  \label{fig:Diagram-Lace-2}
\end{figure}

\begin{definition}
  \label{def:subwalks}
  For $\V{n}$ a composition of $n$ into $2m-1$ parts and
  $\omega\in\walk_{n}$, let
  $\V{\omega}\bydef (\omega^{(i)})_{i=1}^{2m-1}$ denote the vector of
  walks determined by
  $\omega^{(i)} \bydef \omega_{\cb{M_{i-1},M_{i}}}$. Note
  $\omega^{(i)}$ has length $n_{i}$.
\end{definition}
Using the characterisation of compatible edges above we will now
rewrite the right-hand side of \eqref{eq:U-Edge} in terms of the
subwalks $\omega^{(i)}$; this requires some further definitions. To
keep the notation to a minimum we will in fact give an upper bound.

When $m=1$ define
\begin{equation*}
  A^{(1)}_{1} \bydef \{\omega^{(1)}\in\saw\cup\sap\}, \quad \eta^{(1)}\bydef\emptyset.
\end{equation*}
For $m\geq 2$ we introduce: $\eta^{(1)}\bydef\emptyset$;
$\eta^{(2)}\bydef\omega^{(1)}$; for $k=1,\dots,m-1$,
$\eta^{(2k+1)}\bydef\omega^{(2k-1)}\circ\omega^{(2k)}$; and for
$k=2,\dots,m-1$,
$\eta^{(2k)}\bydef\omega^{(2k-3)}\circ\omega^{(2k-2)}\circ\omega^{(2k-1)}$. For
$m\geq 2$, $1\leq k\leq 2m-1$, and $L\in\lace^{(m)}$ define
$A^{(m)}_{k} = A^{(m)}_{k}(L)$ by
\begin{equation*}
  A^{(m)}_{k}\bydef
  \{\omega^{(k)}\in\saw, \textrm{$\omega^{(k)}_{i}\neq
    \eta^{(k)}_{j}$ if $i'j'\in \comp_{k}(L)$}\},
\end{equation*}
where $i'$ is the index such that $\omega^{(k)}_{i}$ is the $i'$th
vertex in $\omega = \omega^{(1)}\circ\dots\circ\omega^{(2m-1)}$, and
similarly for $j'$.

\begin{proposition}
  \label{prop:Lace-Edge}
  Let $m\geq 1$ and $1\leq k\leq 2m-1$ be integers, let
  $L\in\lace^{(m)}$ have an associated length vector $\V{n}$, let
  $\omega\in\walk_{n}$, and let $\omega^{(k)}$ be the $k^{\text{th}}$
  subwalk of $\omega$ determined by $L$ as in
  \Cref{def:subwalks}. Then
  \begin{equation}
    \label{eq:Lace-Edge}
    \P_{n_{k}}(\omega^{(k)})\prod_{i'j'\in\comp_{k}(L)}(1-U_{i'j'}(\omega))
    \leq \indicator_{A^{(m)}_{k}}
    \wt_{\kappa}(\omega^{(k)};\eta^{(k)}).
  \end{equation}
\end{proposition}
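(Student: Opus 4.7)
My plan is a direct case analysis keyed to the explicit classification of $\comp_k(L)$ stated just before the proposition. Since $\omega_{i'}=\omega_{j'}$ is incompatible with $\{\omega_{i'},\omega_{i'+1},\omega_{j'-1},\omega_{j'}\}$ being a plaquette, each factor in the product rewrites as
\begin{equation*}
1-U_{i'j'}(\omega) = \indicatorthat{\omega_{i'}\neq\omega_{j'}}\ob{1+\kappa\,\indicatorthat{\{\omega_{i'},\omega_{i'+1},\omega_{j'-1},\omega_{j'}\}\text{ is a plaquette}}}.
\end{equation*}
Taking the product over $i'j'\in\comp_k(L)$ therefore factors as $\indicator_{E_k}(1+\kappa)^{N_k(\omega)}$, where $E_k$ is the event that $\omega_{i'}\neq\omega_{j'}$ for every $i'j'\in\comp_k(L)$ and $N_k(\omega)$ counts those $i'j'\in\comp_k(L)$ whose associated quadruple is a plaquette.

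I would then verify that $\indicator_{E_k}\leq\indicator_{A^{(m)}_k}$ by inspecting the classification. Every pair $(i',j')$ with $i',j'\in\{M_{k-1}\}\cup I_k$, $i'<j'$, and $j'-i'>1$ meets the conditions for membership in $\comp_k(L)$ (the extra constraints at $j'=M_{2m-1}$ or $j'=M_{2k+2}$ and the $M^1_0=\emptyset$ exception never bite in this intra-$\omega^{(k)}$ regime), so $E_k$ forces $\omega^{(k)}$ to be self-avoiding; and the remaining avoidances in $E_k$ are, by construction of $A^{(m)}_k$, exactly the conditions $\omega^{(k)}_i\neq\eta^{(k)}_j$ featuring in its definition. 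For the exponent, each $i'j'\in\comp_k(L)$ contributing to $N_k(\omega)$ is witnessed by the pair of adjacent edges $\{\omega_{i'}\omega_{i'+1},\omega_{j'-1}\omega_{j'}\}$, and distinct compatible edges give distinct pairs. Since $j'\in I_k$ forces $\omega_{j'-1}\omega_{j'}$ to be an edge of $\omega^{(k)}$, and since the classification confines the allowed $i'$ to edge-indices of either $\omega^{(k)}$ (when $i'\in\{M_{k-1}\}\cup I_k$) or $\eta^{(k)}$ (otherwise), each such adjacent pair lies in $\adj(\omega^{(k)})$ or $\adj(\eta^{(k)},\omega^{(k)})$. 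Hence $N_k(\omega)\leq|\adj(\omega^{(k)})|+|\adj(\eta^{(k)},\omega^{(k)})|$, and multiplying by $\P_{n_k}(\omega^{(k)})$ and comparing with~\eqref{eq:Hamiltonian-BC} yields~\eqref{eq:Lace-Edge}.

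The main obstacle is not conceptual but the boundary bookkeeping required for the case analysis above: the $M^1_0=\emptyset$ exception, the extra conditions imposed when $j'=M_{2m-1}$ or $j'=M_{2k+2}$, the correct allocation to $\omega^{(k)}$ (rather than $\eta^{(k)}$) of the edge incident to the interface index $M_{k-1}$, and the degenerate $m=1$ case in which $A^{(1)}_1=\{\omega^{(1)}\in\saw\cup\sap\}$ admits polygons because $(0,n)$ is a lace edge rather than a compatible one. Because each of these technicalities can only omit compatible-edge contributions from $N_k(\omega)$, they preserve the direction of the inequality, but enumerating them across the various cases accounts for the bulk of the work.
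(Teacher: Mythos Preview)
Your approach is essentially the same as the paper's: factor each $1-U_{i'j'}$ into an avoidance indicator times a plaquette reward, identify the conjunction of indicators with $A^{(m)}_k$ via the explicit classification of $\comp_k(L)$, and obtain the upper bound by noting that each plaquette reward from a compatible edge is accounted for in $\wt_\kappa(\omega^{(k)};\eta^{(k)})$ (with the few missing rewards from incompatible edges only helping the inequality). Your discussion of the boundary cases---the $m=1$ polygon exception, the $j'=M_{2m-1}$ and $j'=M_{2k+2}$ constraints, and the interface edge at $M_{k-1}$---mirrors the paper's case analysis, and your explicit factorisation into $\indicator_{E_k}(1+\kappa)^{N_k}$ makes the structure slightly more transparent than the paper's prose.
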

\begin{proof}
  The proposition is largely a translation of the
    definition of a compatible edge $i'j'\in\comp_{k}(L)$ and the
    definitions of the walks $\eta^{(k)}$. 
    
    Let us first consider $m=1$, so $k=1$. In this case
    $\eta^{(1)}=\emptyset$. Each factor $1-U_{i'j'}(\omega)$ for
    $i'j'\in\comp_{1}(L)$ enforces
    $\omega^{(1)}_{i'}\neq\omega^{(1)}_{j'}$ (in the case $k=1$ there
    is no distinction between primed and unprimed indices). Since the
    only edge that is \emph{not} compatible is $0n$, we see that
    $\omega_{\cb{0,n-1}}$ is self-avoiding, but
    $\omega_{n}=\omega_{0}$ is not forbidden. Hence
    $\omega^{(1)}\in\saw\cup\sap$, which is precisely the constraint
    that $A^{(1)}_{1}$ occurs.

    Similarly, the factors of $1-U_{i'j'}(\omega)$ encode the reward
    $(1+\kappa)$ if $\{i',i'+1,j'-1,j'\}$ is a plaquette. Since $0n$
    is \emph{not} compatible, if the first and last edges of
    $\omega^{(1)}$ are parallel this reward is not present. Since
    including this reward gives an upper bound we do so, as it allows
    the right-hand side to be written as
    $\wt_{\kappa}(\omega^{(1)};\emptyset)$.

    For $m\geq 2$ and $1\leq k\leq 2m-1$, the considerations are almost
    exactly the same. The fact that $\omega^{(k)}\in\saw$ follows as
    $i'j'\in\comp_{k}(L)$ if $0\leq i<j\leq n_{k}$ and $j>i+1$ by the
    classification of compatible edges for a lace, and this implies
    $\omega^{(k)}_{i}\neq \omega^{(k)}_{j}$. The definitions of
    $A^{(m)}_{k}$ arise from observing that the set of compatible
    edges have endpoints $i'$ such that $\omega_{i'}$ falls into one
    of the walks comprising $\eta^{(k)}$. We obtain an upper bound by
    including additional rewards $(1+\kappa)$ corresponding to
    incompatible edges, and this recreates the weight
    $\wt_{\kappa}(\omega^{(k)};\eta^{(k)})$.
\end{proof}

\subsection{Diagrammatic bounds}
\label{sec:Lace-bounds}

\Cref{prop:Lace-Edge} leads to an upper bound for
$\pi_{z,\kappa}^{(m)}(x)$ in terms of a sum over collections
$\V{\omega}$ of interacting walks. This will be used to estimate the
size of $\abs{\pi^{(m)}_{z,\kappa}(x)}$ in terms of convolutions of
$G_{z,\kappa}(x)$; the resulting bounds are what are known as
\emph{diagrammatic bounds}. The next definition will be used in
upper-bounding the factors $-U_{ij}(\omega)$ in
\eqref{eq:Connected-Lace}.

\begin{definition}
  Define
  $\vertex_{\kappa}(x) \bydef \indicatorthat{x=o} +
  \kappa\indicatorthat{\norm{x}_{\infty}=1}$.
\end{definition}

In what follows $\kappa = \kappa(\slb)$ will be a function of $\slb$,
chosen such that $\kappa\leq \kappa_{0}(d,\slb)$; in particular
\Cref{prop:ASM} applies when $z\geq z_{0}=(1+\kappa)^{-2(d-1)}$. Let
$H_{z,\kappa}(x) \bydef G_{z,\kappa}(x) - \delta_{x,o}$ be the sum of
contributions to $G_{z,\kappa}(x)$ due to non-trivial walks.

\tikzset{ dot/.style={ circle, inner sep=0pt, minimum size=1.5mm,
    fill=black!100 } }
\tikzset{ bigdot/.style={ circle, inner sep=0pt, minimum size=2mm,
    fill=black!100 } }
\begin{figure}[]
  \centering
    \begin{tikzpicture}[scale=.5]
      \node[dot] (v0) at (0,0) {}; 
      \node[dot] (v1) at (0,2) {};
      \draw[black,thick] (v0) -- (v1); 
      \draw[black,thick,decorate, decoration={zigzag,segment length = 5,
        amplitude=1}] (v0) .. controls (.5,1) .. (v1);
      \node at (v0) [below] {$o$};
      \node at (v1) [above] {$x$};
    \end{tikzpicture}
    \qquad
    \begin{tikzpicture}[scale=.5]
      \node[dot] (v0) at (0,0) {};
      \node[dot] (v1) at (2,0) {};
      \node[dot] (v0p) at (2,2) {};
      \node[dot] (v2) at (4,2) {};
      \draw[black, thick] (v0) -- (v1) -- (v0p) -- (v2);
      \draw[black,thick, decorate, decoration={zigzag,segment
      length = 5, amplitude=1}] (v0) to (v0p); 
      \draw[black,thick, decorate, decoration={zigzag,segment
      length = 5, amplitude=1}] (v1) to (v2); 
      \node at (v0) [below] {$o$};
      \node at (v1) [below] {$y_{2}$};
      \node at (v0p) [above] {$y_{3}$};
      \node at (v2) [above] {$x$};
    \end{tikzpicture}
    \quad
    \begin{tikzpicture}[scale=.5]
    \node[dot] (v0) at (0,0) {};
    \node[dot] (v1) at (2,0) {};
    \node[dot] (v0p) at (2,2) {};
    \node[dot] (v2) at (4,2) {};
    \node[dot] (v1p) at (4,0) {};
    \node[dot] (v3) at (6,0) {};
    \node[dot] (v2p) at (6,2) {};
    \node[dot] (v4) at (8,2) {};
    \node[dot] (v3p) at (8,0) {};
    \node[dot] (v5) at (10,0) {};

   \node at (v1) [below] {$y_{2}$};
    \node at (v0p) [above] {$y_{3}$};
    \node at (v2) [above] {$y_{4}$};
    \node at (v1p) [below] {$y_{5}$};
    \node at (v3) [below] {$y_{6}$};
    \node at (v2p) [above] {$y_{7}$};
    \node at (v4) [above] {$y_{8}$};
    \node at (v3p) [below] {$y_{9}$};

    \node at (v0) [below] {$o$};
    \node at (v5) [below] {$x$};

    \draw[black, thick] (v0) -- (v1) -- (v0p) -- (v2) -- 
    (v1p) -- (v3) -- (v2p) -- (v4) -- (v3p) -- (v5);
    \draw[black,thick, decorate, decoration={zigzag,segment
      length = 5, amplitude=1}] (v0) to (v0p); 
    \draw[black,thick, decorate, decoration={zigzag,segment
      length = 5, amplitude=1}] (v1) to (v1p); 
    \draw[black,thick, decorate, decoration={zigzag,segment
      length = 5, amplitude=1}] (v2) to (v2p);
    \draw[black,thick, decorate, decoration={zigzag,segment
      length = 5, amplitude=1}] (v3) to (v3p);
    \draw[black,thick, decorate, decoration={zigzag,segment
      length = 5, amplitude=1}] (v4) to (v5);
  \end{tikzpicture}
  \caption{Diagrammatic representations of \Cref{prop:DB} in the cases
    $m=1,m=2$ and $m=5$. Wavy lines represent factors
    $\vertex(y_{2j+1}-y_{2j-2})$. For $m\geq 2$ the vertical and first and last
    horizontal straight lines represent factors of $H_{z,\kappa}$. The
    remaining horizontal straight lines represent factors of
    $G_{z,\kappa}$. For $m=1$ the vertical straight line represents $D\ast H_{z,\kappa}$.}
  \label{fig:Diagram-DB}
\end{figure}
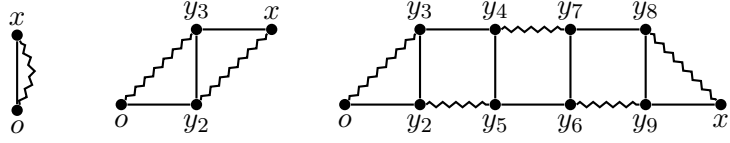

\begin{proposition}
  \label{prop:DB}
  Fix $\kappa\leq \kappa_{0}$, $z_{0}\leq z<z_{c}$, and $x\in
  \Z^{d}$. The following bounds hold. For $m=1$,
  \begin{equation}
    \label{eq:Pi-1-Walks}
    \abs{\pi^{(1)}_{z,\kappa}(x)} 
    \leq (1+\kappa)^{2\nrp} z\vertex_{\kappa}(x) (D\ast H_{z,\kappa})(x).
  \end{equation}
  For $m\geq 2$, let
  $\V{y} \bydef (y_{2},y_{3},\dots,y_{2m-1})\in \Z^{d(2m-2)}$ denote a
  $(2m-2)$-tuple of vertices in $\Z^{d}$. Then
  \begin{align}
    \label{eq:Pi-n-Walks}
    \abs{\pi^{(m)}_{z,\kappa}(x)} 
    &\leq 
       (1+\kappa)^{(4m-2)\nrp} H_{z,\kappa}(y_{2})\vertex_{\kappa}(y_{3}) \\\nonumber
       &\hspace{-20pt}\times \sum_{\V{y}} \Big[ \prod_{j=1}^{m-2}
         \big(H_{z,\kappa}(y_{2j+1}-y_{2j}) 
       G_{z,\kappa}(y_{2j+2}-y_{2j+1})
       \vertex_{\kappa}(y_{2j+3}-y_{2j})\big)\\\nonumber
       &\hspace{40pt}H_{z,\kappa}(y_{2m-1}-y_{2m-2}) H_{z,\kappa}(x-y_{2m-1}) \vertex_{\kappa}(x-y_{2m-2}) \Big]
  \end{align}
  where the sum runs over all $\V{y}\in \Z^{d(2m-2)}$. If $m=2$ the
  product is empty, and hence identically one by definition.
\end{proposition}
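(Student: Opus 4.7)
The plan is to expand $\pi^{(m)}_{z,\kappa}(x)$ using the lace decomposition, bound the lace-edge and compatible-edge contributions separately, and sum each subwalk using the averaged submultiplicativity of \Cref{sec:ASM}. Starting from \eqref{eq:Pi-n} and the lace expansion \eqref{eq:Connected-Lace}, I take absolute values. The key pointwise estimate for each lace edge $ij \in L$ is
\[
\abs{-U_{ij}(\omega)} \leq \vertex_\kappa(\omega_j - \omega_i),
\]
which holds because $\omega_i = \omega_j$ forces $\omega_j - \omega_i = o$ and the plaquette condition forces $\norm{\omega_j - \omega_i}_\infty = 1$, matching the two summands in $\vertex_\kappa$. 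For the compatible-edge factors, \Cref{prop:Lace-Edge} bounds the product $\P_{n_k}(\omega^{(k)}) \prod_{i'j' \in \comp_k(L)}(1 - U_{i'j'}(\omega))$ by $\indicator_{A^{(m)}_k} \wt_\kappa(\omega^{(k)}; \eta^{(k)})$ separately for each $k$.

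Next I reindex. Each lace of size $m$ corresponds bijectively to a composition $\V{n}$ satisfying \eqref{eq:Intervals}, so the sum over $n$ and $L$ becomes a sum over compositions. Setting $y_1 = o$, $y_{2m} = x$, and $y_j = \omega_{M_{j-1}}$ for $2 \leq j \leq 2m-1$, the lace endpoints satisfy $\omega_{s_i} = y_{2i-2}$ (for $i \geq 2$, with $\omega_{s_1} = o$) and $\omega_{t_i} = y_{2i+1}$ (for $i \leq m-1$, with $\omega_{t_m} = x$). Consequently the lace-edge bounds reduce to the vertex factors $\vertex_\kappa(y_3)$, $\vertex_\kappa(y_{2j+3} - y_{2j})$ for $j = 1,\dots, m-2$, and $\vertex_\kappa(x - y_{2m-2})$, matching the structure in \Cref{fig:Diagram-DB}.

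With the endpoints $\{y_j\}$ fixed, the $2m-1$ subwalks decouple, each running from $y_k$ to $y_{k+1}$. For each $k$ I apply \Cref{cor:ASM-Ep} to the sum $\sum_{\omega^{(k)}} z^{n_k} \wt_\kappa(\omega^{(k)}; \eta^{(k)}) \indicator_{A^{(m)}_k}$: for indices $k \in \{3,5,\dots,2m-3\}$ (where \eqref{eq:Intervals} allows $n_k = 0$) this gives the bound $(1+\kappa)^{2\nrp} G_{z,\kappa}(y_{k+1} - y_k)$, while for the remaining indices (where $n_k \geq 1$) \Cref{cor:ASM-E-1} yields $(1+\kappa)^{2\nrp} H_{z,\kappa}(y_{k+1} - y_k)$. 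The $2m-1$ uniform applications of $(1+\kappa)^{2\nrp}$ produce the prefactor $(1+\kappa)^{(4m-2)\nrp}$, and summing over $y_2,\dots,y_{2m-1}$ yields \eqref{eq:Pi-n-Walks}. The application to $\omega^{(1)}$ is wasteful since $\eta^{(1)} = \emptyset$, but retaining it gives a uniform constant across subwalks. For $m = 1$, after bounding the single lace edge $0n$ by $\vertex_\kappa(x)$ I split off the initial step $\omega_0 \to \omega_1$ to isolate a factor $zD(\omega_1)$, treat the one-step walk $\omega_{\cb{0,1}}$ as a memory, and apply \Cref{cor:ASM-Ep} together with \Cref{cor:ASM-E-1} to bound the residual sum from $\omega_1$ to $x$ of length $\geq 1$ by $(1+\kappa)^{2\nrp} H_{z,\kappa}(x - \omega_1)$; summing over $\omega_1$ produces the convolution $(D \ast H_{z,\kappa})(x)$ and yields \eqref{eq:Pi-1-Walks}.

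The main obstacle is the combinatorial bookkeeping. First, one must align the lace-interval structure with the indexing $\{y_j\}$ so that the characterization of compatible edges given before \Cref{def:subwalks} matches the memory structure $\eta^{(k)}$ used in \Cref{prop:Lace-Edge}, ensuring that \Cref{prop:Lace-Edge} applied factor by factor really captures all relevant self-avoidance and near-neighbour attraction contributions inherited from the lace. Second, one must consistently respect the distinction between subwalks of length $\geq 1$ (contributing $H_{z,\kappa}$) and subwalks of length $\geq 0$ (contributing $G_{z,\kappa}$), as dictated by the two cases in \eqref{eq:Intervals}; this alternation between $H$ and $G$ is what gives the diagrammatic bound \eqref{eq:Pi-n-Walks} its characteristic shape.
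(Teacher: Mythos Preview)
Your proposal follows essentially the same route as the paper's proof: bound each lace edge by $\vertex_\kappa$, bound the compatible-edge products via \Cref{prop:Lace-Edge}, and then remove the memories using averaged submultiplicativity (\Cref{cor:ASM-Ep} and \Cref{cor:ASM-E-1}) to produce the factors $H_{z,\kappa}$ and $G_{z,\kappa}$.

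The one step that is not quite right as written is your claim that ``with the endpoints $\{y_j\}$ fixed, the $2m-1$ subwalks decouple.'' They do not: the memory $\eta^{(k)}$ depends on the full trajectories of the preceding subwalks $\omega^{(k-3)},\omega^{(k-2)},\omega^{(k-1)}$, not merely on their endpoints, so the summand does not factor over $k$. The paper resolves this by summing iteratively from $k=2m-1$ down to $k=1$. At each stage the earlier subwalks are still fixed, so $\eta^{(k)}$ is a determined walk; the events $A^{(m)}_j$ for $j<k$ ensure that $\eta^{(k)}$ is self-avoiding or a self-avoiding polygon, which is what \Cref{cor:ASM-Ep} requires of a memory; and the bound produced is independent of $\eta^{(k)}$, which is what allows the next iteration to proceed. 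Once you make this iterative order explicit, your argument coincides with the paper's.
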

A diagrammatic formulation of these upper bounds can be found in
\Cref{fig:Diagram-DB}.
\begin{proof}[Proof of \Cref{prop:DB}]
  We first outline the strategy of the proof. Recall the definition
  \eqref{eq:Pi-n} of $\pi^{(m)}_{z,\kappa}(x)$ as a sum over walks
  $\omega\in\walk(x)$ and laces $L\in\lace^{(m)}$. The proof will use
  the decomposition of a walk $\omega$ into $2m-1$ subwalks
  $\omega^{(i)}$ as given by
  \Cref{def:subwalks}. \Cref{prop:Lace-Edge} gives a formula for the
  weight of the $i^{\text{th}}$ subwalk in terms of the preceding
  subwalks, and by using averaged submultiplicativity
  (\Cref{cor:ASM-E-1}) we will upper bound the sums over subwalks
  $\omega^{(i)}$ by factors of $G_{z,\kappa}$ and $H_{z,\kappa}$. The
  factors of $\vertex_{\kappa}$ will arise when we bound the product
  of $-U_{ij}(\omega)$ over $ij\in L$ in \eqref{eq:Pi-n}.

  Recall the definition \eqref{eq:U-Interaction} of $U_{ij}(\omega)$,
  and observe that for all walks $\omega$
  \begin{equation}
    \label{eq:U-Vertex}
    \abs{U_{ij}(\omega)}\leq \vertex_{\kappa}(\omega_{j}-\omega_{i}),
  \end{equation}
  which follows by considering separately
  $\norm{\omega_{j}-\omega_{i}}_{\infty}$ being $0$, $1$, or at least
  $2$.

  We first consider the case $m=1$. By \eqref{eq:Pi-n},
  \eqref{eq:Connected-Lace}, \Cref{prop:Lace-Edge} and the definition
  \eqref{eq:U-Interaction} of $U_{0n}(\omega)$ we have the upper bound
  \begin{align*}
    \abs{\pi^{(1)}_{z,\kappa}(x)} 
    &\leq  \sum_{n\geq 2} \sum_{\omega\in\walk_{n}(x)} 
      z^{n}\wt_{\kappa}(\omega) \abs{U_{0n}(\omega)}
      \indicatorthat{\omega\in\saw\cup\sap} \\
    &\leq
      \sum_{y\neq o}zD(y) \sum_{n\geq
      1}\sum_{\omega\in\walk_{n}(y,x)}z^{n-1}\wt_{\kappa}(\omega;
      (o,y)) \vertex_{\kappa}(x)
      \indicatorthat{(o,y)\circ\omega\in\saw\cup\sap},
  \end{align*}
  where the second inequality follows from \eqref{eq:U-Vertex} and
  explicitly separating out and summing over the location $y$ of the first
  step of the walk.
  
  Recall $\abs{\omega}=n$ if $\omega\in\walk_{n}$. Let
  $\wt_{z,\kappa}(\omega;\eta) =z^{\abs{\omega}}
  \wt_{\kappa}(\omega;\eta)$, and recall the definition of
  $\bar G^{\eta}_{z,\kappa}(x)$ from below \eqref{eq:Model-BCp}.  The
  sum of  $\wt_{z,\kappa}(\omega;(o,y))
  \indicatorthat{(o,y)\circ\omega\in\saw\cup\sap}$ over
  $\omega\in\walk(y,x)$ of length at least one is at most
  $\bar G^{(o,y)}_{z,\kappa}(y-x)-\delta_{y-x,o}$.  The proposition
  for $m=1$ now follows from \Cref{cor:ASM-E-1} and the translation
  invariance of $H_{z,\kappa}$.
    
  Next we consider $m\geq 2$; the argument is very similar to
  $m=1$. As described in \Cref{sec:Pi-Refined}, if $L\in\lace^{(m)}$
  then $\omega$ is the concatenation of $2m-1$ walks $\omega^{(i)}$,
  $\omega^{(i)}$ has length $n_{i}$, and the vector $\V{n}$ of lengths
  satisfies \eqref{eq:Intervals}. If $\abs{\omega}=n$, distribute the
  factor $z^{n}$ so that there are $n_{i}$ factors associated to the
  walk $\omega^{(i)}$. Then by \Cref{prop:Lace-Edge} and
  \eqref{eq:U-Vertex},
  \begin{align}
    \label{eq:pim-ub1}
    \abs{\pi^{(m)}_{z,\kappa}(x)}
    \leq &\sum_{\V{y}}\sum_{\V{\omega}}
    \vertex_{\kappa}(y_{3})
    \vertex_{\kappa}(x-y_{2m-2})\\\nonumber   
    &\hspace{5mm}
       \prod_{j=2}^{m-1} \vertex_{\kappa}(y_{2j+1}-y_{2j-2})
    \prod_{i=1}^{2m-1} \indicator_{A^{(m)}_{i}}
      \wt_{z,\kappa}(\omega^{(i)};\eta^{(i)}),
  \end{align}
  where the outer sum is over
  $\V{y}=(y_{2},\dots, y_{2m-1})\in \Z^{d(2m-2)}$, the inner sum is
  over $\V{\omega}= (\omega^{(1)}, \dots, \omega^{(2m-1)})$ whose
  lengths $n_{i}$ satisfy \eqref{eq:Intervals} and such that
  $\omega^{(1)}\in\walk(y_{2})$, $\omega^{(i)}\in\walk(y_{i},y_{i+1})$
  for $i=2,\dots,2m-2$, and $\omega^{(2m-1)}\in
  \walk(y_{2m-1},x)$. The sum over laces has been replaced with a sum
  over the possible lengths of the subwalks: see the discussion
  following \eqref{eq:Intervals}.

  We now iteratively sum over the subwalks $\omega^{(i)}$,
  starting with $i=2m-1$. Since $\omega^{(j)}$ is fixed for
  $j=1\, \dots, 2m-2$, the walk $\eta^{(2m-1)}$ is determined, and it
  plays the role of a memory for $\omega^{(2m-1)}$. Temporarily let
  $\tilde n = n_{2m-1}$, $\tilde\omega = \omega^{(2m-1)}$, and
  $\tilde\eta = \eta^{(2m-1)}$, and let
  $\tilde \walk_{\tilde n} = \walk_{\tilde n}(y_{2m-1},x)$.  With these
  definitions we can upper bound the sum over $\omega^{(2m-1)}$ on the
  right-hand side of \eqref{eq:pim-ub1} by
  \begin{equation}
    \label{eq:pim-ub2}
    \mathop{\sum_{\tilde\omega\in\tilde \walk}}_{\tilde n\geq 1}
    \indicator_{A^{(m)}_{2m-1}} 
    \wt_{z,\kappa}(\tilde\omega;\tilde\eta) 
    \leq (1+\kappa)^{2\nrp} 
    \mathop{\sum_{\tilde\omega\in\tilde \walk_{\tilde n}}}_{\tilde n\geq 1}
    \indicatorthat{\tilde\omega\in\saw} 
    \wt_{z,\kappa}(\tilde\omega).
  \end{equation}
  This bound follows from \Cref{cor:ASM-E-1}, as the event
  $A^{(m)}_{2m-1}$ occurs only if $\omega^{(2m-1)}$ does not intersect
  $\eta^{(2m-1)}$ except for the initial, and possibly terminal,
  vertices of $\omega^{(2m-1)}$; we have also used the fact that in
  the sum on the right-hand side of \eqref{eq:pim-ub1} the subwalks
  $\omega^{(i)}$ are all self-avoiding due to the events
  $A^{(m)}_{i}$, and that these events imply $\eta^{(2m-1)}$ is
  self-avoiding or a self-avoiding polygon. The sum on the right-hand side of
  \eqref{eq:pim-ub2} is $H_{z,\kappa}(x-y_{2m-1})$, as there is no
  contribution from zero-step walks due to the constraint
  $\tilde n\geq 1$.

   Repeating this procedure for $2m-2,2m-3,\dots, 1$ gives the
   two-point functions in the upper bounds of the proposition.
   Factors of $H_{z,\kappa}$ arise for edges on which $n_{i}\geq 1$,
   and factors of $G_{z,\kappa}$ for those with $n_{i}\geq 0$; see
   \eqref{eq:Intervals}. The overall factor of $(1+\kappa)^{(4m-2)\nrp}$
   arises as we obtain a factor of $(1+\kappa)^{2\nrp}$ for each factor of
   $H_{z,\kappa}$ and $G_{z,\kappa}$, and there are $2m-1$ of these.
\end{proof}

\subsection{Proof of Gaussian decay}
\label{sec:proof-gaussian-decay}

We will now prove \Cref{thm:High-D} by making use of \Cref{thm:APP-GA}.
Recall that $\mnorm{x} \bydef \max\{\norm{x}_{2},1\}$. 

\begin{proposition}[{\cite[Prop.~1.7]{HvdHS}}]
  \label{prop:HvdHS1.7}
  If $f,g\colon \Z^{d}\to \R$ satisfy $\abs{f(x)}\leq
    \mnorm{x}^{-a}$ and $\abs{g(x)}\leq \mnorm{x}^{-b}$ with $a\geq
    b>0$, there exists $C=C(a,b,d)$ such that
    \begin{equation}
      \abs{(f\ast g)(x)}\leq
      \begin{cases}
        C\mnorm{x}^{-b} & a>d,\\
        C\mnorm{x}^{d-(a+b)} & \textrm{$a<d$ and $a+b>d$}.
      \end{cases}
    \end{equation}
\end{proposition}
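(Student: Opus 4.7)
The plan is to establish the bound by a standard three-region decomposition of the sum defining $(f\ast g)(x)$. When $\mnorm{x}$ is bounded the conclusion is trivial by absorbing constants, so we may assume $\mnorm{x}$ is large and set $R\bydef\mnorm{x}/3$. I will split $\Z^{d}$ into the near-origin region $\{y:\mnorm{y}\leq R\}$, the near-$x$ region $\{y:\mnorm{x-y}\leq R\}$ (which is disjoint from the first by the triangle inequality for this choice of $R$), and the far region where both $\mnorm{y}>R$ and $\mnorm{x-y}>R$. In each region I combine a uniform pointwise bound on one of $f,g$ with an elementary summation estimate for the other.

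For the near-origin region, $\mnorm{x-y}$ is of order $\mnorm{x}$, so $\abs{g(x-y)}\leq C\mnorm{x}^{-b}$ uniformly in $y$; the remaining factor $\sum_{\mnorm{y}\leq R}\abs{f(y)}$ is $O(1)$ if $a>d$ and $O(R^{d-a})$ if $a<d$, by comparison with the integral $\int_{1}^{R}r^{d-1-a}\,dr$. Symmetrically, in the near-$x$ region $\abs{f(y)}\leq C\mnorm{x}^{-a}$ and the sum $\sum_{\mnorm{x-y}\leq R}\abs{g(x-y)}$ is handled the same way in the variable $z=x-y$. For the far region I split further according to whether $\mnorm{y}\leq\mnorm{x-y}$; this gives the pointwise bound $\abs{f(y)g(x-y)}\leq\mnorm{y}^{-(a+b)}+\mnorm{x-y}^{-(a+b)}$ (using $a\geq b$, so the larger of $\mnorm{y},\mnorm{x-y}$ absorbs the larger exponent), and the resulting tail sums $\sum_{\mnorm{y}>R}\mnorm{y}^{-(a+b)}$ equal $O(R^{d-(a+b)})=O(\mnorm{x}^{d-(a+b)})$ because $a+b>d$ in both statements of the proposition.

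Summing the three regions yields the desired estimate. In case $a>d$, each contribution reduces to $O(\mnorm{x}^{-b})$: the near-origin region directly; the near-$x$ region at worst contributes $\mnorm{x}^{-a}\cdot\mnorm{x}^{d-b}=\mnorm{x}^{d-a-b}$, which is $O(\mnorm{x}^{-b})$ because $a\geq d$; and the far region similarly. In case $a<d$ with $a+b>d$, the hypothesis $a\geq b$ forces $b<d$ as well, and all three regions produce exactly $O(\mnorm{x}^{d-a-b})$. The only real nuisance is a possible logarithmic correction at the borderline $b=d$ (which can only occur in case $a>d$): a $\log\mnorm{x}$ factor would then appear in the near-$x$ region, but it is absorbed by the strict inequality $a>d$ since $\mnorm{x}^{-a}\log\mnorm{x}=o(\mnorm{x}^{-d})$. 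The argument is otherwise a routine computation, and the constant $C(a,b,d)$ is built from the implicit constants of the three elementary sums.
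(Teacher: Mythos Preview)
Your proof is correct. The paper does not give its own proof of this proposition; it is quoted directly from \cite[Prop.~1.7]{HvdHS}, and the argument there is exactly the three-region decomposition you describe. One minor remark: the parenthetical ``using $a\geq b$, so the larger of $\mnorm{y},\mnorm{x-y}$ absorbs the larger exponent'' is slightly garbled (the bound $\abs{f(y)g(x-y)}\leq\mnorm{y}^{-(a+b)}+\mnorm{x-y}^{-(a+b)}$ holds without invoking $a\geq b$, simply by replacing the larger of the two quantities by the smaller in the appropriate factor), but this does not affect the validity of the estimate.
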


\begin{proposition}
  \label{prop:HvdHS1.8}
  Let $d>4$. Suppose $\beta>0$, $\kappa\leq \min\{\kappa_{0},\beta\}$,
  $z_{0}\leq z \leq 2$, and that
  \begin{equation}
    \label{eq:Bound-Assume}
    G_{z,\kappa}(x)\leq \beta \mnorm{x}^{-(d-2)},\qquad x\neq o.
  \end{equation}
  If $\beta\leq \beta_{0}(d)$, then there is a $c=c(d)>0$ such that
  \begin{equation}
    \label{eq:Pi-HvdHS}
    \abs{\Pi_{z,\kappa}(x)}\leq c\beta\indicatorthat{x=o} + 
    \frac{c\beta^{2}}{\mnorm{x}^{3(d-2)}}.
  \end{equation}
\end{proposition}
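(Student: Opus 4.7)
The plan is to bound each term $\pi^{(m)}_{z,\kappa}(x)$ in the expansion \eqref{eq:Pi} separately, using the diagrammatic estimates of Proposition~\ref{prop:DB} together with the convolution estimate Proposition~\ref{prop:HvdHS1.7}, and then sum over $m\geq 1$. The hypothesis \eqref{eq:Bound-Assume} combined with $H_{z,\kappa}\leq G_{z,\kappa}$ yields $H_{z,\kappa}(x)\leq \beta\mnorm{x}^{-(d-2)}$ for $x\neq o$. Moreover, since $\vertex_{\kappa}$ is bounded by $\max(1,\kappa)\leq 2$ and supported on $\{y:\norm{y}_{\infty}\leq 1\}$, we have $\vertex_{\kappa}(y)\leq C(K)\mnorm{y}^{-K}$ for any $K\geq 0$, uniformly in $\kappa\leq 1$.

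The $m=1$ contribution is handled directly from \eqref{eq:Pi-1-Walks}. At $x=o$, the spread-out structure of $D$ combined with the pointwise bound on $H_{z,\kappa}$ gives $(D\ast H_{z,\kappa})(o)=O(\beta)$, producing the $c\beta\indicatorthat{x=o}$ term in \eqref{eq:Pi-HvdHS}. For $x\neq o$, the factor $\vertex_{\kappa}(x)=\kappa\indicatorthat{\norm{x}_{\infty}=1}\leq \beta$ restricts the contribution to the unit sphere, where $\mnorm{x}^{-3(d-2)}$ is bounded below by a positive constant, so the resulting $O(\beta^2)$ contribution is absorbed into the $c\beta^2\mnorm{x}^{-3(d-2)}$ term.

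For $m\geq 2$, I will apply Proposition~\ref{prop:HvdHS1.7} iteratively to the convolution structure \eqref{eq:Pi-n-Walks}. Each factor $\vertex_{\kappa}$ effectively ``pins'' two vertices of the diagram to lie within a unit ball of one another; using the slow variation of the polynomial decay $\mnorm{\cdot}^{-(d-2)}$ on such balls (for arguments bounded away from the origin), these pinnings reduce the effective diagram to three propagator ``strands'' from $o$ to $x$, each of size $\beta\mnorm{x}^{-(d-2)}$, together with additional internal bubble-type convolutions. Since $d\geq 5$ ensures $2(d-2)>d$, Proposition~\ref{prop:HvdHS1.7} applies to bound each such internal convolution by a constant times $\beta$. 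The expected outcome is $|\pi^{(m)}_{z,\kappa}(x)|\leq (C\beta)^{m-2}\,C'\beta^2\,\mnorm{x}^{-3(d-2)}$, and summation over $m\geq 2$ gives the claimed bound as long as $\beta\leq \beta_0(d)$ is small enough for geometric convergence. For $x$ in a bounded region around the origin, $\mnorm{x}^{-3(d-2)}$ is of order one, and the estimate reduces to the easier statement $|\Pi_{z,\kappa}(x)|=O(\beta^2)$.

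The principal obstacle is the careful bookkeeping through the iterated convolutions: each application of Proposition~\ref{prop:HvdHS1.7} must fall into one of its admissible regimes ($a>d$ or $a<d$ with $a+b>d$), and the identification of exactly three long strands connecting $o$ to $x$ requires a careful analysis of the diagram topology for general $m$. Uniformity in $m$ of the constants arising from the pinning of $\vertex_{\kappa}$ factors and from the bubble convolutions is also needed so that the geometric series over $m$ converges to the correct order in $\beta$.
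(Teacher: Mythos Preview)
Your proposal is correct and follows essentially the same approach as the paper: bound $\pi^{(1)}$ directly from \eqref{eq:Pi-1-Walks}, and for $m\geq 2$ use the fact that each $\vertex_{\kappa}$ pins two diagram vertices to within unit distance, reducing the bound to a standard SAW-type diagram with three strands from $o$ to $x$. The paper makes your ``pinning'' precise by introducing displacement variables $\V{\rho}$ with $\norm{\rho_j}_\infty\leq 1$, observing that $\trans_{\rho}G_{z,\kappa}$ and $\trans_{\rho}H_{z,\kappa}$ obey the same polynomial bounds up to a $d$-dependent constant, and then invoking the inductive $M^{(m)}$ estimate of \cite{HvdHS} rather than applying Proposition~\ref{prop:HvdHS1.7} step by step.
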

\begin{proof}
  $G_{z,\kappa}(o)=1$ as only the trivial self-avoiding walk ends at
  the origin, so~\eqref{eq:Bound-Assume} implies
  $G_{z,\kappa}(x)\leq \mnorm{x}^{-(d-2)}$ and
  $H_{z,\kappa}(x)\leq \beta\mnorm{x}^{-(d-2)}$ for all
  $x\in\Z^{d}$. In particular, $\norm{H_{z,\kappa}}_{\infty}\leq\beta$.

  First consider $\pi^{(1)}_{z,\kappa}(x)$. If $x=o$ then
  $\vertex_{\kappa}=1$, so~\eqref{eq:Pi-1-Walks}, $z\leq 2$, and the
  inequality $\norm{f\ast g}_{1}\leq\norm{f}_{\infty}\norm{g}_{1}$ imply
  \begin{equation}
    \label{eq:DB-Pi-1-o}
    \abs{\pi^{(1)}_{z,\kappa}(o)} 
    \leq (1+\kappa)^{2\nrp}z \sum_{y\in\Z^{d}}D(y)H_{z,\kappa}(-y) 
    \leq 2(1+\kappa)^{2\nrp}\beta
  \end{equation}
  since $\sum_{y}D(y)=1$.  If $x\neq o$ then
  $\abs{\vertex_{\kappa}(x)}\leq\kappa\indicatorthat{\norm{x}_{\infty}=1}$,
  and an argument as above shows that
  \begin{align}
    \nonumber
    \abs{\pi^{(1)}_{z,\kappa}(x)} 
    &\leq (1+\kappa)^{2\nrp}\kappa z \indicatorthat{\norm{x}_{\infty}=1}
      \sum_{y\in\Z^{d}}D(y)H_{z,\kappa}(x-y) \\     \label{eq:DB-Pi-1-no}
    &\leq 2(1+\kappa)^{2\nrp}\beta^{2} \indicatorthat{\norm{x}_{\infty}=1}
  \end{align}
  where we have used $\kappa\leq \beta$ and $z\leq 2$.

  Next we consider $\pi^{(m)}_{z,\kappa}(x)$ for $m\geq 2$. The factors
  $\vertex_{\kappa}$ in~\eqref{eq:Pi-n-Walks} imply the collections $\V{y}$
  of vertices that give a non-zero contribution satisfy
  \begin{align}
    \label{eq:vertex-con}
    &\norm{y_{3}}_{\infty}\leq 1, \quad
    \norm{x-y_{2m-2}}_{\infty}\leq 1, \quad \text{and } \\\nonumber
    &\norm{y_{2j+3}-y_{2j}}_{\infty}\leq 1 \text{ for }  j=1,\dots, m-2.
  \end{align}
  Given $\V{y}$ satisfying~\eqref{eq:vertex-con}, define $\V{\rho}$ to
  be the collection of vectors
  \begin{align}
    \label{eq:rho-con}
    &\rho_{1}\bydef y_{3},\quad \rho_{2m+1}\bydef x-y_{2m-2}, \quad
    \text{and} \\\nonumber
   &\rho_{2j+1}\bydef y_{2j+1}-y_{2j-2}, \text{ for } j=1,\dots,m-2.
  \end{align}
  Each $\rho_{j}$ satisfies $\norm{\rho_{j}}_{\infty}\leq 1$. The sum
  over $\V{y}$ in~\eqref{eq:Pi-n-Walks} can be replaced by a sum over
  $y_{i}$ for $i=2,4,\dots, 2m-2$ and a sum over the possible
  $\V{\rho}$. Formally, letting $\pi^{(m)}=\pi^{(m)}_{z,\kappa}$, we
  re-express \eqref{eq:Pi-n-Walks} as
  \begin{equation}
    \label{eq:rho-form}
    \pi^{(m)}(x) \bydef
    (1+\kappa)^{(4m-2)\nrp}\sum_{\V{y}'}\sum_{\V{\rho}}\pi^{(m)}_{\V{y}',\V{\rho}}(x),
  \end{equation}
  where the sum over $\V{y}'$ is over tuples
  $(y_{2},y_{4},\dots, y_{2m-2})$ and this defines the terms
  $\pi^{(m)}_{\V{y}',\V{\rho}}(x)$ as the contributions to
  \eqref{eq:Pi-n-Walks} with the vertices $y_{i}$ determined by
  $\V{y}'$ and $\V{\rho}$; we will shortly give an explicit formula
  for the $\pi^{(m)}_{\V{y}',\V{\rho}}$. We have abused notation
  in~\eqref{eq:rho-form}, but the bold subscripts will ensure that
  $\pi^{(m)}_{\V{y}',\V{\rho}}$ is distinguished from
  $\pi^{(m)}_{z,\kappa}$ in what follows.

  We will now show the proposition follows from the estimate
  \begin{equation}
    \label{eq:DB-Unif-x1}
    \sum_{\V{y}'} \pi^{(m)}_{\V{y}',\V{\rho}}(x) \leq
    \beta^{m}C^{m}\mnorm{x}^{-3(d-2)}, \qquad m\geq 2,
  \end{equation}
  for a constant $C>0$ independent of $\beta$.  \Cref{eq:DB-Unif-x1}
  is uniform in $\V{\rho}$, so with~\eqref{eq:rho-form} it
  implies
  \begin{equation}
    \label{eq:DB-n-Bound}
    \abs{\pi^{(m)}_{z,\kappa}(x)} \leq 
    (C'\beta)^{m}\mnorm{x}^{-3(d-2)}, \qquad m\geq 2,
  \end{equation}
  where $C'$ can be taken to be
  $C (1+(3^{d}-1)\kappa) (1+\kappa)^{2\nrp}$. The factor of
  $(1+\kappa)^{2\nrp}$ is from the prefactor in
  \eqref{eq:rho-form}.  The factor of $(1+(3^{d}-1)\kappa)$ arises as
  (i) each $\rho_{i}$ has $3^{d}-1$ non-zero possibilities, (ii) each
  $i$ with $\norm{\rho_{i}}_{\infty}=1$ carries a factor of $\kappa$
  from $\vertex_{\kappa}(\rho_{i})$, and (iii)
  $\vertex_{\kappa}(o)=1$. Summing~\eqref{eq:DB-n-Bound} over
  $m\geq 2$ and combining it with the bounds~\eqref{eq:DB-Pi-1-o}
  and~\eqref{eq:DB-Pi-1-no} for $m=1$ implies the proposition. The
  dependence of $\beta$ in the proposition is on $d$ alone because
  $\kappa\leq\min\{\kappa_{0},\beta\}$ by hypothesis, so $C'$ depends
  only on the dimension $d$. 

  The remainder of the proof establishes~\eqref{eq:DB-Unif-x1}, and
  for this we need an explicit formula for
  $\pi^{(m)}_{\V{y}',\V{\rho}}(x)$. Fix $\V{\rho}$, let
  $H=H_{z,\kappa}$, $G=G_{z,\kappa}$, $\vertex=\vertex_{\kappa}$, and
  $y_{0}=o$.  Recall that $\trans_{a}f(x)=f(x-a)$ for
  $f\colon \Z^{d}\to \R$ and $x,a\in\Z^{d}$. This yields
  \begin{align}
    \nonumber \pi^{(m)}_{\V{y}',\V{\rho}}(x)
    &\bydef
      \sum_{\V{y}'} H(y_{2}) \prod_{j=1}^{m-2}\ob{
      \trans_{-\rho_{2j+1}}H(y_{2j-2}-y_{2j})
      \trans_{\rho_{2j+1}}G(y_{2j+2}-y_{2j-2})}
    \\\label{eq:Pre-Conv-Form}&\hspace{20mm}
                                \trans_{\rho_{2m-1}}H(y_{2m-4}-y_{2m-2})
                                \trans_{-\rho_{2m-1}}H(x-y_{2m-4})
  \end{align}
  where the sum is over $y_{2},\dots, y_{2m-2}$.

  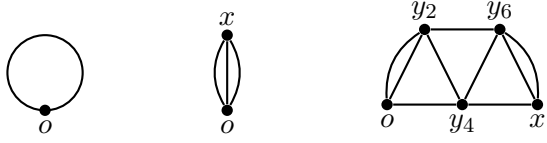
\begin{figure}[]
    \centering
        \begin{tikzpicture}[scale=.5]
      \node[dot] (v0) at (0,0) {}; 
      \draw[black,thick] (0,1) circle (1);
      \node at (v0) [below] {$o$};
    \end{tikzpicture}
    \qquad\qquad
    \begin{tikzpicture}[scale=.5]
      \node[dot] (v0) at (0,0) {};
      \node[dot] (v1) at (0,2) {};
      \draw[black, thick] (v0) -- (v1);
      \draw[black,thick] (v0) edge [bend right = 30] (v1);
      \draw[black,thick] (v0) edge [bend left = 30] (v1);
      \node at (v0) [below] {$o$};
      \node at (v1) [above] {$x$};
    \end{tikzpicture}
    \qquad\qquad
    \begin{tikzpicture}[scale=.5]
    \node[dot] (v0) at (0,0) {};
    \node[dot] (v1) at (1,2) {};
    \node[dot] (v2) at (2,0) {};
    \node[dot] (v3) at (3,2) {};
    \node[dot] (v4) at (4,0) {};

    \node at (v0) [below] {$o$};
   \node at (v1) [above] {$y_{2}$};
    \node at (v2) [below] {$y_{4}$};
    \node at (v3) [above] {$y_{6}$};
    \node at (v4) [below] {$x$};

    \draw[black, thick] (v0) -- (v1) edge[bend right=30] (v0);
    \draw[black,thick] (v0) -- (v2) -- (v1) -- 
    (v3) -- (v2) -- (v4) -- (v3) edge[bend left=30] (v4);
  \end{tikzpicture}
    \caption{Diagrammatic representation of \Cref{eq:Pre-Conv-Form}
      when $m=1,2,5$. This is precisely the form of the upper bounds for
      self-avoiding walk, i.e., $\kappa=0$. Horizontal lines
      represent factors of $G_{z,\kappa}$ or its translates, and the
      remaining lines represent factors of $H_{z,\kappa}$.}
    \label{fig:Pre-Conv}
  \end{figure}

  Diagrammatically, see~\Cref{fig:Pre-Conv}, \Cref{eq:Pre-Conv-Form}
  has exactly the form of a self-avoiding walk diagrammatic
  bound~\cite{Slade}, but where the two-point functions $G,H$ have
  been replaced with their translates. Our estimates for $G$ and $H$
  imply there is an $\con = \con(d)$ such that
  \begin{align}
    \label{eq:G-Trans}
    \abs{\trans_{\rho}G_{z,\kappa}(x)} &\leq \con(d)\mnorm{x}^{-(d-2)}
    \\
    \label{eq:H-Trans}
    \abs{\trans_{\rho}H_{z,\kappa}(x)} &\leq \con(d)\beta\mnorm{x}^{-(d-2)}
  \end{align}
  since $\mnorm{x+\rho}/\mnorm{x}$ is uniformly bounded above when
  $\norm{\rho}_{\infty}\leq 1$. Thus, the two-point functions
  $\trans_{\rho}G$ and $\trans_{\rho}H$ satisfy, up to a constant
  depending only on $d$, the same estimates as do $G$ and $H$. 

  The remainder of the proof is standard in lace expansion analyses,
  and hence we will be somewhat brief. See,
  e.g.,~\cite[proof of Prop.~1.8(a)]{HvdHS} for more details.

  Define $\tilde{G}$ and $\tilde{H}$ to be the upper bounds on $G$ and $H$
  given by the right-hand sides of \Cref{eq:G-Trans,eq:H-Trans}. Let 
  \begin{align*}
    A(u,v,x,y)  &\bydef \tilde{H}(v-u)\tilde{G}(y-u)\indicatorthat{v=x},\\
    M^{(2)}(x,y) &\bydef \tilde{H}(x)^{2}\tilde{G}(y), \\
    M^{(m)}(x,y) &\bydef \sum_{u,v\in\Z^{d}}M^{(m-1)}(u,v)A(u,v,x,y), \qquad
    m\geq 3.
  \end{align*}
  With these definitions, we obtain
  \begin{equation}
    \label{eq:Conv-Bound}
    \sum_{\V{y}'}\pi^{(m)}_{\V{y}',\V{\rho}}(x) \leq M^{(m)}(x,x), \qquad m\geq 2,
  \end{equation}
  where in the case $m=2$ we have degraded the bound slightly by using
  the estimate $H\leq G$. By \eqref{eq:G-Trans} and \eqref{eq:H-Trans}
  there is a constant $c'=c'(d)>0$ such that
  \begin{equation}
    \label{eq:A-Bound}
    A(u,v,x,y) \leq
    \frac{c'\beta}{\mnorm{v-u}^{d-2}\mnorm{y-u}^{d-2}}
    \indicatorthat{v=x}.
  \end{equation}
  Define
  \begin{equation*}
    \bar{\cc S}
    \bydef \sup_{x\in\Z^{d}}\sum_{y\in \Z^{d}} \frac{1}{\mnorm{y}^{d-2}{\mnorm{x-y}^{d-2}}}.
  \end{equation*}
  When $d>4$, $\bar{\cc S}$ is finite by an elementary convolution
  estimate~\cite[Proposition~1.7]{HvdHS}.  By an induction on $m$
  using~\eqref{eq:A-Bound} it can be shown~\cite[p.\
    381-382]{HvdHS} that this implies there is
  a $C=C(d)$ such that
  \begin{equation}
    \label{eq:M-Bound}
    M^{(m)}(x,y)\leq (c'\beta)^{m}(C\bar{\cc S})^{m-2}
    \frac{1}{\mnorm{x}^{2(d-2)}\mnorm{y}^{d-2}}, \qquad m\geq 2,
  \end{equation}
  which proves \Cref{eq:DB-Unif-x1}.
\end{proof}

\begin{proof}[Proof of \Cref{thm:High-D}]
  To prove \Cref{thm:High-D}, it suffices to verify that there is a
  $\kappa_{0}$ such that, if $\kappa\leq \kappa_{0}$, the hypotheses of
  \Cref{sec:HP} on $D$, $G_{z,\kappa}$, and $\Pi_{z,\kappa}$ are
  satisfied.

  \Cref{hyp:SD} is trivially satisfied. \Cref{hyp:CP} is satisfied for
  $\kappa\leq \kappa_{0}(L)$ for some $\kappa_{0}(L)$ by
  \Cref{thm:CC-Exists-Intro} which ensures the critical point exists,
  and \Cref{prop:Gamma-LB}, which ensures the divergence of the
  susceptibility.

  We now verify the monotonicity hypothesis, (ii), and (iii) of
  \Cref{hyp:SC-Decay}. Since $G_{z,\kappa}(x)$ is an
  absolutely convergent power series with positive coefficients when
  $z<z_{c}$, it is monotone and continuous for $z<z_{c}$. The
    exponential decay hypothesis is provided by \Cref{lem:expdecay}.

  To verify (i) of \Cref{hyp:SC-Decay}, let
  $z_{0}=(1+\kappa)^{-2(d-1)}$. When $z\leq z_{0}$,
  \begin{align}
    \label{eq:Bootstrap-Start}
   G_{z,\kappa}(x) 
       &= \sum_{n} \sum_{\omega\in\saw_{n}(x)}
       z^{n}\wt_{\kappa}(\omega) 
                   \leq \sum_{n} \sum_{\omega\in\saw_{n}(x)}
      (1+\kappa)^{-2(d-1)n}\wt_{\kappa}(\omega) \\
    \nonumber
   &\leq \sum_{n} \sum_{\omega\in\saw_{n}(x)}
      \P_{n}(\omega) 
       = G_{1,0}(x).
  \end{align}
  This implies $G_{z_{0},\kappa}(x)\leq \srwtwo_{1}(x)$, as
  $\srwtwo_{1}(x)$ is clearly an upper bound for the SAW two-point
  function. 

  For any $D$, \Cref{hyp:Pi-Bound} follows for $G_{z,\kappa}$ by
  \Cref{prop:HvdHS1.8} when $\kappa$ is small enough, with $\beta_{0}$
  uniform in $\kappa$.  Thus, for $\kappa\leq \kappa_{0}(L_{0})$,
with  $L_{0}$ the constant of \Cref{thm:APP-GA}, we can apply
  \Cref{thm:APP-GA} by the discussion of \Cref{sec:other}. This proves
  the theorem.
\end{proof}

\subsection{Acknowledgments}
\label{sec:acknowledgements}

The authors would like to thank both referees for their
critiques and comments, which have lead to a significantly improved
article. T.H.\ would like to thank Gordon Slade and Remco van der
Hofstad for encouraging discussions. A.H.\ is supported by NSF grant
DMS-1512908. The majority of this work was carried out while T.H.\ was
supported by an NSERC postdoctoral fellowship at UC Berkeley;
additional support was provided by EPSRC grant EP/P003656/1.

\appendix

\section{Gaussian Asymptotics}
\label{app:GA}

This appendix reviews \cite[Theorem~1.2]{HvdHS}, which derives Gaussian
asymptotics for critical two-point functions. Our motivation is that
the presentation in~\cite{HvdHS} is, at places, dependent on the
particular models being studied. The proofs, however, apply
essentially verbatim to other models. Our review axiomatizes
sufficient assumptions for models similar to self-avoiding walk. We
indicate where these assumptions are used in proofs, but omit the
portions of the proofs that purely replicate~\cite{HvdHS}. We emphasise
that the result and techniques are those of~\cite{HvdHS}, and our
presentation is primarily for the benefit of the reader who is not
familiar with~\cite{HvdHS}.

\subsection{Setup}
\label{sec:Setup}

Let $\R_{\geq 0}$ denote the non-negative reals. For
$z\in\R_{\geq 0}$, $G_{z}\colon \Z^{d}\to\R_{\geq 0}$,
$\HHSpi_{z}\colon\Z^{d}\to\R$, and $D$ a probability distribution on
$\Z^{d}$, we consider the convolution equation
\begin{equation}
  \label{eq:LSD}
  G_{z}(x)= \delta_{o,x} + \HHSpi_{z}(x) +
  (zD\ast(\delta+ \HHSpi_{z})\ast G_{z})(x).
\end{equation}
We will further assume that $G_{z}$, $\HHSpi_{z}$, and $D$ are all
$\Z^{d}$-symmetric, and that $G_{z}(x)$ is a power series in
  $z$ with non-negative coefficients. We will see in \Cref{sec:other}  that the analysis
of~\eqref{eq:LSD} also applies to the convolution equation derived for
$\kappa$-ASAW in the main body of the text.

The \emph{critical point $z_{c}$} is
$z_{c} = \sup\{z\in\R_{\geq 0} \mid \chi(z)<\infty\}$,  
where the \emph{susceptibility $\chi(z)$} is defined by
\begin{equation}
  \label{eq:Susc}
  \chi(z) \bydef \sum_{x\in\Z^{d}}G_{z}(x).
\end{equation}

\subsection{Hypotheses and Theorem}
\label{sec:HP}

\begin{hypothesis}
  \label{hyp:SD}
  Assume that $D$ is a spread-out step distribution as defined in
  \Cref{def:SD}.
\end{hypothesis}
Let $X_{n}$ be a discrete time simple random walk with step
distribution $D$. Let
$\sigma^{2} = \sum_{x\in\Z^{d}}D(x)\norm{x}^{2}_{2}$. Note that
$\sigma^{2}$ is comparable to the spread-out 
parameter~$L^{2}$. The
\emph{non-interacting two-point function $\srwtwo_{\mu}$} is defined by
\begin{equation}
  \label{eq:SRW-2PT}
  \srwtwo_{\mu}(x) \bydef \sum_{n=0}^{\infty}\mu^{n}\PP_{0}\cb{X_{n}=x}.
\end{equation}

An important consequence of the form of $D$ is the following
proposition. Let $a_{d} \bydef \frac{d\Gamma(d/2-1)}{2\pi^{d/2}}$, 
where $\Gamma$ is Euler's gamma function.
\begin{proposition}[{\cite[Prop.~1.6]{HvdHS}}]
  \label{prop:srw}
  Suppose $d>2$ and \Cref{hyp:SD} holds. For $L$ sufficiently large,
  $\alpha>0$, $\mu\leq 1$, and $x\in\Z^{d}$,
  \begin{align}
    \label{eq:SRW-Decay}
    \srwtwo_{\mu}(x) &\leq \delta_{o,x} +
                 O\left(\frac{1}{L^{2-\alpha}\mnorm{x}^{d-2}}\right)\\
    \label{eq:SRW-Decay-2}
    \srwtwo_{1}(x) &= \frac{a_{d}}{\sigma^{2}}\frac{1}{\mnorm{x}^{d-2}}+
               O\left(\frac{1}{\mnorm{x}^{d-\alpha}}\right).
  \end{align}
  The implicit constants may depend on $\alpha$, but not on $L$.
\end{proposition}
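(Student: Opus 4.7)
The plan is to use Fourier analysis, starting from the inversion formula
\[
\srwtwo_{\mu}(x) = \int_{[-\pi,\pi]^d} \frac{e^{-ik\cdot x}}{1-\mu\hat{D}(k)}\,\frac{dk}{(2\pi)^d},
\]
where $\hat{D}(k) = \sum_y D(y) e^{ik\cdot y}$. From \Cref{hyp:SD} I would derive two structural estimates on $\hat{D}$. Near the origin, $\Z^d$-symmetry and Taylor expansion give $\hat{D}(k) = 1 - \tfrac{\sigma^{2}}{2d}|k|^{2} + O((L|k|)^{4})$, with $\sigma^{2} = \Theta(L^{2})$. Elsewhere on $[-\pi,\pi]^{d}$, the piecewise continuity of $h$ with $h(0)>0$, via the approximation $\hat{D}(k) \approx \hat{h}(Lk)$ and a Riemann--Lebesgue-type argument, yields the uniform lower bound $1-\hat{D}(k)\geq c\bigl(1 \wedge (L|k|)^{2}\bigr)$.

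For (ii), I would split the integral at the scale $|k|\leq \mnorm{x}^{-(1-\eta)}$ for a small $\eta = \eta(\alpha)>0$. On the inner region, replacing $(1-\hat{D}(k))^{-1}$ by $2d/(\sigma^{2}|k|^{2})$ (with the Taylor remainder tracked), extending the integration to all of $\R^{d}$, and applying the Fourier transform of the Riesz potential
\[
\int_{\R^{d}} \frac{e^{-ik\cdot x}}{|k|^{2}}\,\frac{dk}{(2\pi)^{d}} = \frac{\Gamma(d/2-1)}{4\pi^{d/2}|x|^{d-2}}
\]
produces the main term $a_{d}/(\sigma^{2}\mnorm{x}^{d-2})$. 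The three error contributions (Taylor remainder, truncation of integration, and the outer region, controlled by integration by parts using the uniform lower bound on $1-\hat{D}(k)$) each turn out to be $O(\mnorm{x}^{\alpha-d})$ once $\eta$ is optimised.

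For (i), non-negativity of the coefficients in the power series defining $\srwtwo_{\mu}$ gives $\srwtwo_{\mu}(x)\leq\srwtwo_{1}(x)$, so we may take $\mu=1$. For $x=o$, summing the local CLT bound $\P_{o}[X_{n}=o] = O((nL^{2})^{-d/2})$ over $n\geq 1$ yields $\srwtwo_{1}(o) = 1 + O(L^{-d})$, matching the claim since $L^{-d}\leq L^{-(2-\alpha)}$ for $d\geq 3$. For $x\neq o$ with $\mnorm{x}$ large the bound follows directly from (ii) upon noting $\sigma^{2}=\Theta(L^{2})$; for $1\leq\mnorm{x}\lesssim L$ the local CLT bound $\P_{o}[X_{n}=x] = O((nL^{2})^{-d/2})$ (valid uniformly in $x$ for $n\geq 1$) sums to $\srwtwo_{1}(x) = O(L^{-d})$, which lies well inside $O(L^{-(2-\alpha)}/\mnorm{x}^{d-2})$ on this range.

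The main obstacle will be the analysis at the crossover scale $|k|\sim 1/L$, where neither the Taylor expansion nor a high-$|k|$ argument alone is sharp. Controlling $\hat{D}$ uniformly on $[-\pi,\pi]^{d}$ with the correct $L$-dependence requires the spread-out structure of $D$ beyond mere decay: the piecewise continuity of $h$ at the origin underlies both the Taylor expansion and the uniform decay of $1-\hat{D}(k)$ at larger $|k|$. Tracking errors at the right order in $\mnorm{x}^{\alpha}$ (with $\alpha>0$ arbitrary) additionally requires matching the inner/outer split parameter $\eta$ carefully to the Taylor remainder order.
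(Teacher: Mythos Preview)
The paper does not prove this proposition; it is quoted directly from \cite[Prop.~1.6]{HvdHS} and used as a black box in the appendix. So there is no ``paper's own proof'' to compare against here.

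Your Fourier-analytic sketch is the right idea and is essentially the approach taken in \cite{HvdHS}. A couple of remarks on places where the sketch is thin. First, the ``integration by parts'' you invoke for the outer region needs care: $h$ is only piecewise continuous, so $\hat{D}$ need not be smooth or have good derivative bounds, and the actual argument in \cite{HvdHS} instead exploits the spread-out scaling $\hat{D}(k)\approx \hat{h}(Lk)$ together with $\hat{h}\in L^{1}(\R^{d})$ (from Riemann--Lebesgue and square-integrability) rather than differentiating the integrand. Second, the uniform infrared bound $1-\hat{D}(k)\geq c\,(1\wedge (L|k|)^{2})$ for \emph{all} $k\in[-\pi,\pi]^{d}$ is the crucial input and is not automatic from Riemann--Lebesgue alone; \cite{HvdHS} establish it by a separate argument using the positivity and continuity of $h$ near the origin, and this is where ``$L$ sufficiently large'' enters. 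With those two points handled your outline would go through.
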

Note that, for fixed $d$, the leading coefficient
in~\eqref{eq:SRW-Decay-2} is proportional to $L^{-2}$.
The next two hypotheses deal with the critical point
and behaviour of $G_{z}$ for $z_{0}\leq z<z_{c}$, where $z_{0}>0$ is a
chosen value of the parameter $z$.

\begin{hypothesis}
  \label{hyp:CP}
  The critical point $z_{c}$ satisfies $z_{0}<z_{c}<\infty$. The
  susceptibility specified by \eqref{eq:Susc} diverges as the critical
  point is approached from below:
  $\lim_{z\uparrow z_{c}}\chi(z)=\infty$.
\end{hypothesis}

\begin{hypothesis}
  \label{hyp:SC-Decay}
  $G_{z}$ is well-defined, not identically zero, and monotone
    increasing in $z$.
  For $z_{0}\leq z<z_{c}$ and for each $x\in \Z^{d}$, 
  \begin{enumerate}
  \item $G_{z_{0}}(x)\leq \srwtwo_{1}(x)$,
  \item $G_{z}(x)$ is continuous for $z\in\co{z_{0},z_{c}}$, and
  \item for $t>0$ and $z\in\co{z_{0},z_{c}-t}$ there are constants
    $c(t),C(t)>0$ such that
    \begin{equation}
      \label{eq:subcrit-mass}
      G_{z}(x)\leq C(t)e^{-c(t)\mnorm{x}}.
    \end{equation}
  \end{enumerate}
\end{hypothesis}

The most substantial hypothesis is the next one.
\begin{hypothesis}
  \label{hyp:Pi-Bound}
  Assume
  \begin{equation}
    \label{eq:xIRB-H}
    G_{z}(x)\leq \beta \mnorm{x}^{-d+2}, \qquad x\neq o.
  \end{equation}
  Suppose also that $z_{0}\leq z\leq 2$.  If $\beta<\beta_{0}$, there
  is a constant $c=c(d)>0$ such that
  \begin{equation}
    \label{eq:PIb-H}
    \abs{\HHSpi_{z}(x)} \leq c\beta\delta_{o,x} +
    \frac{c\beta^{2}}{\mnorm{x}^{3(d-2)}}.
  \end{equation}
\end{hypothesis}

\begin{theorem}[{\cite[Theorem~1.2]{HvdHS}}]
  \label{thm:APP-GA}
  Assume $D$, $G_{z}$, and $\HHSpi_{z}$ satisfy the hypotheses of
  \Cref{sec:HP}.  Choose $0<\alpha<2$. Let $\beta_{0}$ be the constant
  of~\Cref{hyp:Pi-Bound}.
  
  There is an $L_{0}(d,\alpha,\beta_{0})$ such that, for $L\geq L_{0}$,
  the function $G_{z_{c}}\colon\Z^{d}\to\R$ is well-defined, and there
  is an $A>0$ such that
  \begin{equation}
    \label{eq:GA}
    G_{z_{c}}(x) \sim \frac{a_{d}A}{\sigma^{2}\mnorm{x}^{2-d}} 
    \left(1+O\left(\frac{L^{2}}{\mnorm{x}^{2-\alpha}}\right)\right).
  \end{equation}
  The implicit constants are uniform in $x$ and $L$. The values of
  $z_{c}$ and $A$ are $1+O(L^{\alpha-2})$.
\end{theorem}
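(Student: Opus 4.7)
The plan is to implement the bootstrap strategy of \cite{HvdHS} under the axiomatised hypotheses of \Cref{sec:HP}. Passing to Fourier space in \eqref{eq:LSD} yields
\begin{equation*}
  \hat G_{z}(k) = \frac{1+\hat\HHSpi_{z}(k)}{1 - z\hat D(k)\bigl(1+\hat\HHSpi_{z}(k)\bigr)}.
\end{equation*}
The proof proceeds in two stages: first, propagate uniform \emph{infrared bounds} on $G_z$ from $z=z_0$ up to all $z<z_c$; second, pass to $z\uparrow z_c$ and Fourier-invert to obtain \eqref{eq:GA}. The whole argument is essentially a transcription of \cite{HvdHS}; what has to be checked is that every step uses only properties listed in \Cref{sec:HP} and no SAW-specific features.

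For the first stage I would introduce three bootstrap functions of $z$: one tracking $z$ itself, one measuring a weighted supremum of $|\hat G_z(k)|(1-\hat D(k))/\sigma^2$, and one measuring a discrete-Hessian norm whose finiteness implies $G_z(x)\le \mathrm{const}\cdot\mnorm{x}^{-(d-2)}$ by Fourier inversion. By \Cref{hyp:SC-Decay}(ii) they are continuous on $[z_0,z_c)$, by \Cref{hyp:SC-Decay}(i) and \Cref{prop:srw} they are bounded by a small constant at $z=z_0$ once $L$ is large, and by \Cref{hyp:SC-Decay}(iii) they cannot jump to infinity as $z\uparrow z_c$. The crucial step is the \emph{improvement}: an assumed bound $f_i(z)\le K$ gives, via \Cref{prop:HvdHS1.7} applied to \eqref{eq:LSD}, the hypothesis of \Cref{hyp:Pi-Bound}, which in turn supplies $|\hat\HHSpi_z(k)|$, $|\nabla \hat\HHSpi_z(k)|$, and $|\nabla^2 \hat\HHSpi_z(k)|$ of size $O(L^{\alpha-2})$ uniformly in $z$; reinserting these into the Fourier representation above improves $f_i(z)$ to $K/2$, provided $L\ge L_0(d,\alpha,\beta_0)$. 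A standard forbidden-region argument then propagates the improved bounds over all of $[z_0,z_c)$.

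With uniform control of $\hat\HHSpi_z$ in hand, one sends $z\uparrow z_c$. \Cref{hyp:CP} forces the denominator of $\hat G_z(0)$ to vanish at $z=z_c$, whence $z_c = (1+\hat\HHSpi_{z_c}(0))^{-1}=1+O(L^{\alpha-2})$. Expanding the denominator of $\hat G_{z_c}(k)$ to second order in $k$ identifies a constant $A = 1+O(L^{\alpha-2})$ and a remainder controlled by the Hessian of $\hat\HHSpi_{z_c}$, yielding
\begin{equation*}
  \hat G_{z_c}(k) = \frac{A}{1 - \hat D(k)/\hat D(0)} + R(k),
\end{equation*}
with the size of $R$ calibrated to match the error in \eqref{eq:GA}. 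Fourier-inverting and comparing with \Cref{prop:srw} at $\mu=1$ then gives \eqref{eq:GA}.

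The main obstacle is the improvement step, and within it the Hessian bound on $\hat\HHSpi_z$. The $O(\mnorm{x}^{-3(d-2)})$ tail in \eqref{eq:PIb-H} is exactly the one that makes $\sum_x |x|^2|\HHSpi_z(x)|$ finite and small in $L$ when $d>4$, and this is the reason mean-field behaviour is captured precisely in $d\ge 5$. The only genuine check beyond \cite{HvdHS} is that every convolution inequality and every symmetry-based cancellation used there is available under \Cref{hyp:SD,hyp:CP,hyp:SC-Decay,hyp:Pi-Bound}; modulo that bookkeeping the argument is the one of \cite[\S\S 2--3]{HvdHS}.
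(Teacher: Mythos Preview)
Your proposal takes a genuinely different route from the paper. The paper's proof is entirely in $x$-space: the bootstrap function is $g(z)=\sup_{x\neq o}(2K)^{-1}L^{2-\alpha}\mnorm{x}^{d-2}G_{z}(x)$ (together with $z/(2z_{0})$), the improvement step feeds the assumed pointwise bound on $G_{z}(x)$ directly into \Cref{hyp:Pi-Bound} and then into the $x$-space convolution estimates of \Cref{prop:HvdHS1.7}, and the final asymptotic \eqref{eq:GA} comes out by comparing $G_{z_{c}}$ with $\srwtwo_{1}$ pointwise. No Fourier transforms are taken anywhere. By contrast you work in $k$-space, tracking $\hat G_{z}(k)(1-\hat D(k))$ and a Hessian norm, and obtain the $x$-space asymptotic by inverting an expansion of $\hat G_{z_{c}}(k)$ near $k=0$. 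This is a legitimate lace-expansion strategy (it is essentially the older Hara--Slade/Madras--Slade route), but it is not what \cite{HvdHS} does: that paper was written precisely to replace the Fourier bootstrap by an $x$-space one, because the pointwise error $O(L^{2}/\mnorm{x}^{2-\alpha})$ in \eqref{eq:GA} is awkward to extract from control of $\hat G$ and its derivatives near $k=0$. So your claim that ``the argument is the one of \cite[\S\S2--3]{HvdHS}'' is a misattribution. The $x$-space approach buys the sharp pointwise error directly and keeps the bootstrap to a single scalar function of $z$; your Fourier approach is more familiar from earlier work but would need additional argument (or an appeal to a different reference) to recover the precise $x$-space error term stated in the theorem.
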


\subsection{Proof}
\label{sec:proof}

The next proposition is the heart of the analysis. In what follows we
assume the hypotheses of \Cref{thm:APP-GA}; in
particular, $\beta_{0}$ is given.

\begin{proposition}
  \label{prop:xIRB}
  Fix $\alpha>0$. There is an $L_{0}=L_{0}(\beta_{0},d,\alpha,z_{0})$ such
  that, for $L\geq L_{0}$,
  \begin{equation}
    \label{eq:xIRB}
    G_{z_{c}}(x) \leq
    \frac{\mathrm{const}}{L^{2-\alpha}\mnorm{x}^{d-2}}, \qquad x\neq o,
  \end{equation}
  and $z_{c}\leq 1 + O(L^{-2+\alpha})$.
\end{proposition}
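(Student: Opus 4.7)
The plan is to execute the bootstrap argument of~\cite{HvdHS}, tailored to the $x$-space setting via the axioms of \Cref{sec:HP}. Summing the convolution equation~\eqref{eq:LSD} over $x\in\Z^{d}$ and using \Cref{hyp:Pi-Bound} (to guarantee $1+\hat{\HHSpi}_{z}(0)>0$) would give
\begin{equation*}
\chi(z)=\frac{1+\hat{\HHSpi}_{z}(0)}{1-z\bigl(1+\hat{\HHSpi}_{z}(0)\bigr)},
\end{equation*}
so by \Cref{hyp:CP} one has $z_{c}\bigl(1+\hat{\HHSpi}_{z_{c}}(0)\bigr)=1$ once the denominator is meaningful at $z_{c}$. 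The strategy is to control $G_{z}$ and $\hat{\HHSpi}_{z}(0)$ simultaneously on $[z_{0},z_{c})$ and then pass to $z_{c}$ by monotone convergence in $z$, using that $G_{z}(x)$ is a power series with nonnegative coefficients.

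I would introduce the bootstrap function
\begin{equation*}
f(z)\bydef L^{2-\alpha}\sup_{x\in\Z^{d}}\mnorm{x}^{d-2}\,G_{z}(x)\big/K_{\star},
\end{equation*}
for a constant $K_{\star}=K_{\star}(d)$ to be fixed. \Cref{hyp:SC-Decay}~(ii)--(iii) and dominated convergence ensure that $f$ is continuous on $[z_{0},z_{c})$; \Cref{hyp:SC-Decay}~(i) together with \eqref{eq:SRW-Decay-2} and $\sigma^{2}=\Theta(L^{2})$ gives $f(z_{0})\leq 1$ once $K_{\star}$ and $L$ are sufficiently large. The core of the argument would be a forbidden-interval step: there exist $K_{\star}$ and $L_{0}$ such that for all $L\geq L_{0}$ and $z\in[z_{0},z_{c})$,
\begin{equation*}
f(z)\leq 3\quad\Longrightarrow\quad f(z)\leq 2.
\end{equation*}
Granting this, continuity together with $f(z_{0})\leq 1$ rules out the interval $(2,3]$ and forces $f(z)\leq 2$ throughout $[z_{0},z_{c})$. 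Taking $z\uparrow z_{c}$ by monotone convergence then yields~\eqref{eq:xIRB}. The bound on $z_{c}$ would follow by substituting the $x$-space estimate into \Cref{hyp:Pi-Bound} to obtain $|\hat{\HHSpi}_{z_{c}}(0)|\leq \sum_{x}|\HHSpi_{z_{c}}(x)|=O(L^{-(2-\alpha)})$ (the sum converges since $3(d-2)>d$ for $d\geq 5$), and then inserting this into $z_{c}=(1+\hat{\HHSpi}_{z_{c}}(0))^{-1}$.

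For the improvement step I would set $F_{z}\bydef \delta+\HHSpi_{z}$ so that \eqref{eq:LSD} reads $G_{z}=F_{z}+zD\ast F_{z}\ast G_{z}$. Under $f(z)\leq 3$, \Cref{hyp:Pi-Bound} applies with $\beta=3K_{\star}L^{-(2-\alpha)}$. Inverting $\delta-zD\ast F_{z}$ in Fourier and then expanding, one obtains a representation of the form
\begin{equation*}
G_{z}(x)=\bigl(1+\hat{\HHSpi}_{z}(0)\bigr)\,\srwtwo_{\mu}(x)+E_{z}(x),\qquad \mu\bydef z\bigl(1+\hat{\HHSpi}_{z}(0)\bigr)\leq 1,
\end{equation*}
in which $E_{z}$ is an explicit convolution built from the non-local part $\HHSpi_{z}-\hat{\HHSpi}_{z}(0)\delta$ and translates of $\srwtwo_{\mu}$. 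The leading term is $O(L^{-2}\mnorm{x}^{-(d-2)})$ by \Cref{prop:srw}, comfortably better than the target $K_{\star}L^{-(2-\alpha)}\mnorm{x}^{-(d-2)}$. The error $E_{z}$ would be controlled by repeated application of \Cref{prop:HvdHS1.7}; the inequality $3(d-2)>d$ for $d\geq 5$ guarantees that each convolution against the non-local part costs exactly one factor $\beta=O(L^{-(2-\alpha)})$ and preserves $\mnorm{x}^{-(d-2)}$ decay.

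The main obstacle is the bookkeeping in this last step: one must show that every term produced by iterating the convolution equation carries a genuine factor of $\beta$, rather than merely being bounded by an expression involving $\beta$. The standard device, which I would copy verbatim from~\cite[Sec.\ 2]{HvdHS}, is to write $(1-z\hat D\hat F_{z})^{-1}$ as a geometric series in $z\hat D(\hat F_{z}-\hat F_{z}(0))$ and estimate each term separately in $x$-space. For $L_{0}$ sufficiently large the total contribution of $E_{z}$ is bounded by $K_{\star}L^{-(2-\alpha)}\mnorm{x}^{-(d-2)}$, improving $f(z)\leq 3$ to $f(z)\leq 2$ and closing the bootstrap. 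The remaining verification, continuity and the limit $z\uparrow z_{c}$, is straightforward from \Cref{hyp:SC-Decay} and the nonnegativity of the coefficients of $G_{z}(x)$.
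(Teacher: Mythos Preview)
Your overall strategy matches the paper's: both run the $x$-space bootstrap of~\cite{HvdHS} via \Cref{lem:BS}, using \Cref{hyp:SC-Decay} for continuity and the initial bound, and \Cref{hyp:Pi-Bound} together with the convolution estimates of \Cref{prop:HvdHS1.7} for the improvement step. However, there is a genuine gap in your bootstrap function.

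You define $f(z)$ purely in terms of $\sup_{x}\mnorm{x}^{d-2}G_{z}(x)$, and then write ``Under $f(z)\leq 3$, \Cref{hyp:Pi-Bound} applies with $\beta=3K_{\star}L^{-(2-\alpha)}$.'' But \Cref{hyp:Pi-Bound} has \emph{two} hypotheses: the decay bound~\eqref{eq:xIRB-H} on $G_{z}$ \emph{and} the condition $z_{0}\leq z\leq 2$. Your assumption $f(z)\leq 3$ gives the first but says nothing about the second, so the invocation of~\eqref{eq:PIb-H} is not justified. You cannot recover $z\leq 2$ from the susceptibility identity either, since deriving~\eqref{eq:CP-Bound} already requires the $\HHSpi_{z}$ bound and hence $z\leq 2$ --- this is circular. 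The paper closes this loop by taking $f(z)=\max\{g(z),\tfrac{z}{2z_{0}}\}$, so that $f(z)\leq 1$ forces $z\leq 2z_{0}$ automatically; the improvement step then simultaneously shows $g(z)\leq a$ and (via~\eqref{eq:CP-Bound}) $z\leq 1+O(L^{-2+\alpha})<2az_{0}$ for $L$ large. You should augment your bootstrap function in the same way.

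A minor point: ``dominated convergence'' is not the right tool for continuity of a supremum. The paper's argument uses \Cref{hyp:SC-Decay}(iii) to localise the supremum to a finite ball (with radius depending on $t$ but not on $z\in[z_{0},z_{c}-t)$), after which it is a finite maximum of continuous functions.
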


\begin{lemma}[Lemma~2.1~\cite{HvdHS}]
  \label{lem:BS}
  Let $f\colon[z_{1},z_{c})\to\R$, and $a\in(0,1)$. Suppose
  \begin{enumerate}
  \item\label{bs1} $f$ is continuous on $[z_{1},z_{c})$,
  \item\label{bs2} $f(z_{1})\leq a$, and 
  \item\label{bs3} for $z\in[z_{1},z_{c})$ the inequality $f(z)\leq
    1$ implies the inequality $f(z)\leq a$. 
  \end{enumerate}
  Then $f(z)\leq a$ for all $z\in[z_{1},z_{c})$.
\end{lemma}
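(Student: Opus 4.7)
The plan is a standard connectedness bootstrap: since $[z_1, z_c)$ is a connected subset of $\R$, I will exhibit a partition of it into two open sets and use the hypothesis $f(z_1)\leq a$ to force one of the two pieces to be empty. The gap $(a,1]$ that hypothesis (iii) forbids $f$ from entering will play the role of a barrier separating the ``good'' regime $\{f\leq a\}$ from the ``bad'' regime $\{f>1\}$.

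Concretely, I would introduce
\begin{equation*}
  U \bydef \{z\in[z_1,z_c) : f(z)<1\}, \qquad
  V \bydef \{z\in[z_1,z_c) : f(z)>a\}.
\end{equation*}
By continuity (hypothesis (i)) both sets are open in $[z_1,z_c)$. The key observation, which uses hypothesis (iii) in an essential way, is that $U$ and $V$ are disjoint: any $z\in U$ satisfies $f(z)<1$, hence $f(z)\leq 1$, and so (iii) forces $f(z)\leq a$, ruling out membership in $V$. At the same time $U\cup V = [z_1,z_c)$, because for every $z$ we have either $f(z)<1$ (putting $z$ in $U$) or $f(z)\geq 1>a$ (putting $z$ in $V$, since $a\in(0,1)$).

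From hypothesis (ii), $f(z_1)\leq a<1$, so $z_1\in U$ and $U$ is nonempty. Since $\{U,V\}$ is then an open partition of the connected interval $[z_1,z_c)$ with $U\neq\emptyset$, connectedness forces $V=\emptyset$. That is precisely the statement that no $z\in[z_1,z_c)$ satisfies $f(z)>a$, which is the desired conclusion.

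There is no genuine obstacle to this argument; the only place where care is needed is in verifying that $U$ and $V$ really partition the whole interval and are disjoint, both of which rely on the combination of $a<1$ with the ``forbidden band'' $(a,1]$ supplied by hypothesis (iii). An equivalent formulation, which I would mention only if space allowed, is to set $z^\star \bydef \inf\{z\in[z_1,z_c) : f(z)>a\}$ and derive a contradiction by using continuity on a one-sided neighborhood of $z^\star$ together with (iii); this version makes explicit that it is the impossibility of crossing the interval $(a,1]$ continuously that drives the lemma.
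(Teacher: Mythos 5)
Your argument is correct, and it is the standard ``forbidden band'' bootstrap. The paper itself does not prove \Cref{lem:BS}; it is stated as a citation to Lemma~2.1 of~\cite{HvdHS}, so there is no in-text proof to compare against. The usual version in the literature is the one you mention at the end: take $z^\star = \inf\{z : f(z)>a\}$ and derive a contradiction from continuity on a one-sided neighbourhood of $z^\star$ plus hypothesis (iii). Your open-cover reformulation, with $U=\{f<1\}$, $V=\{f>a\}$ disjoint (by (iii)) and covering (since $a<1$), packages the same idea via connectedness of the interval; either presentation is complete and equally short, so the choice is purely stylistic. One minor point worth making explicit if you write this up: openness of $U$ and $V$ is in the subspace topology of $[z_1,z_c)$, which is what is needed for the connectedness argument, and this is exactly what continuity of $f$ on that domain gives you.
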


\begin{proof}[Proof of~\Cref{prop:xIRB}]
  The proof is essentially that in~\cite{HvdHS}. We present the steps in
  which our hypotheses, as opposed to model-specific facts, are used.

  Note that it suffices to prove that~\eqref{eq:xIRB} holds for
  $\alpha < \frac{1}{2}$, as the right-hand side is increasing in
  $\alpha$. By \Cref{hyp:SC-Decay} and the monotone convergence
  theorem, it is enough to prove this for all $z_{0}<z<z_{c}$.
  
  Let $K$ be the optimal constant for the error bound
  in~\Cref{prop:srw}:
  \begin{equation*}
    K = \sup_{L\geq 1,x\neq o} L^{2-\alpha}\mnorm{x}^{d-2}\srwtwo_{1}(x),
  \end{equation*}
  and note $K$ is finite by \eqref{eq:SRW-Decay}.
  Define
  \begin{equation*}
    g_{x}(z) = (2K)^{-1}L^{2-\alpha}\mnorm{x}^{d-2}G_{z}(x),
  \end{equation*}
  and let $g(z) = \sup_{x\neq o}g_{x}(z)$. To prove
  \eqref{eq:xIRB}, we will use \Cref{lem:BS} with
  $f(z) = \max\{g(z),\frac{z}{2z_{0}}\}$, $z_{1}=z_{0}$, and
  $a\in\ob{\frac{1}{2},1}$ arbitrary. The claim that
  $z_{c}=1+O(L^{-2+\alpha})$ will be established in the course of the
  argument.  \vspace{.5em}
  \begin{claim}{}
    Hypothesis~(i) of \Cref{lem:BS} holds. 
  \end{claim}
  \begin{claimproof}
    For $x\in \Z^{d}$, $g_{x}(z)$ is continuous on $[z_{0},z_{c})$ by
    \Cref{hyp:SC-Decay}. It suffices to show $\sup_{x\neq o}g_{x}(z)$
    is continuous on $[z_{0},z_{c}-t)$ for arbitrarily small $t>0$.
    
    Fix $t>0$, and let $z\in[z_{0},z_{c}-t)$. By
    \Cref{hyp:SC-Decay}, $g_{x}(z)$ decays exponentially in
    $\norm{x}_{2}$ with decay rate independent of $z$. 
    Therefore, $\sum_{x\in\Z^{d}}g_{x}(z)$ converges exponentially
    fast with rate independent of $z$. 
    It follows that the supremum of $g_{x}(z)$ occurs on $B_{R}(o)$,
    the ball of radius $R$ about the origin, for some $R=R(L)>0$. This
    proves $\sup_{x\neq o}g_{x}(z)$ is a continuous function of
      $z\in[z_{0},z_{c}-t)$ since 
      the supremum of a finite set of continuous
    functions is continuous.
  \end{claimproof}
  \vspace{.5em}
  \begin{claim}{}
    Hypothesis~(ii) of \Cref{lem:BS} holds. 
  \end{claim}
  \begin{claimproof}
    By \Cref{hyp:SC-Decay} and the definition
    of $K$, $g_{x}(z_{0})\leq \frac{1}{2}$ for all $x$. Since
    $a>\frac{1}{2}$, this proves the claim.
  \end{claimproof}
  \vspace{.5em}
  \begin{claim}{}
    Hypothesis~(iii) of \Cref{lem:BS} holds. 
  \end{claim}
  \begin{claimproof}
    Fix $z_{0}<z<z_{c}$ and suppose $f(z)\leq 1$. Then $z$ is at most
    $2z_{0}$, and
    \begin{equation}
      \label{eq:xIRB-P}
      G_{z}(x)\leq 2z_{0}KL^{-2+\alpha}\mnorm{x}^{2-d},\qquad x\neq o.
    \end{equation}
    Let $\beta = 2z_{0}KL^{-2+\alpha}$. By \Cref{hyp:Pi-Bound}, when
    $L^{-2+\alpha}$ is sufficiently small there is a $c>0$ such that
    \begin{equation}
      \label{eq:xIRB-Pi}
      \abs{\HHSpi_{z}(x)}\leq c\beta\delta_{o,x} +
                          c\beta^{2}\mnorm{x}^{-3(d-2)}
      \leq \frac{c\beta}{\mnorm{x}^{3(d-2)}}.
    \end{equation}
    By \Cref{hyp:SC-Decay}, $G_{z}$ is not identically zero. Thus $\chi(z)>0$, and
    the sum of~\eqref{eq:LSD} over all $x\in\Z^{d}$ can be rearranged
    to give
    \begin{equation}
      \label{eq:CP-ID}
      \chi(z) =
      \frac{1+\sum_{x}\HHSpi_{z}(x)}{1-z-z\sum_{x}\HHSpi_{z}(x)}>0.
    \end{equation}
    By~\eqref{eq:xIRB-Pi}, $\norm{\HHSpi_{z}(x)}_{1}<1$ for $L$ large
    enough. This implies the numerator, and hence the denominator,
    of~\eqref{eq:CP-ID} is strictly positive. Since $f(z)\leq 1$, this
    implies that
    \begin{equation}
      \label{eq:CP-Bound}
      z < 1-z\sum_{x\in\Z^{d}}\HHSpi_{z}(x) \leq 1 + O(z_{0}L^{-2+\alpha}).
    \end{equation}
    Thus $\frac{z}{2}$ is bounded above by $a$ for
    $a\in\ob{\frac{1}{2},1}$, provided that $L$ is large
    enough. 

    What remains is to prove $g(z)\leq a$ for $a\in\ob{\frac{1}{2},1}$
    when $L$ is large enough. This exactly follows the presentation
    in~\cite[p.364]{HvdHS}, and hence we omit it.
  \end{claimproof}
  \vspace{.5em}

  By \Cref{hyp:SC-Decay} this proves the desired bounds, as we have
    proven that $f(z)\leq a$ for $z_{0}\leq z<z_{c}$. The bound on
    $z_{c}$ follows from~\eqref{eq:CP-Bound}, which holds as it was
    derived under the hypothesis that $f(z)\leq 1$.
\end{proof}

\begin{proof}[Proof of~\Cref{thm:APP-GA}]
  This follows~\cite[Theorem~1.2]{HvdHS}. The only model specific step
  in the cited proof is showing that an auxiliary parameter $\mu_{z}$
  increases to $\mu_{z_{c}}=1$ as $z\uparrow z_{c}$. We define this
  parameter below and show that it takes the desired value by
  \Cref{hyp:CP}.

  By~\eqref{eq:xIRB-Pi}, $\HHSpi_{z}(x)$ has a finite second moment
  when $L$ is large enough.  It therefore makes sense to define
  \begin{align}
    \label{eq:lambda-def}
    \lambda_{z} &=
    \frac{1}{1+z\sigma^{-2}\sum_{x}\norm{x}^{2}_{2} \HHSpi_{z}(x)}, \\
    \label{eq:mu-def}
    \mu_{z} &= 1-\lambda_{z}\left(1-z-z\sum_{x}\HHSpi_{z}(x)\right).
  \end{align}  
  \Cref{eq:xIRB-Pi} implies $\lambda_{z}\to 1$ as
  $L\to\infty$ uniformly  in $z\in\cb{z,z_{c}}$.
  By \Cref{eq:CP-ID} and \Cref{hyp:CP}, as $z\uparrow z_{c}$, the
  quantity in brackets in~\eqref{eq:mu-def} tends to zero. Thus,
  $\mu_{z_{c}}\uparrow 1$ as $z\uparrow z_{c}$.
\end{proof}

\subsection{Other convolution equations}
\label{sec:other}

Consider the equation
\begin{equation}
  \label{eq:Lace-Ours}
  G_{z} = \delta +  z (D\ast G_{z}) +
  (\Pi_{z}\ast G_{z}).
\end{equation}
If $\Pi$ satisfies \Cref{hyp:Pi-Bound}, it is possible to
manipulate~\eqref{eq:Lace-Ours} into the form~\eqref{eq:LSD}. To see
this, rewrite~\eqref{eq:Lace-Ours} as
\begin{align*}
  G &= \delta + \Pi + zD\ast(\delta + \Pi)\ast G -\Pi\ast(\delta + 
      zD\ast G - G) \\
    &= \delta + \Pi + zD\ast(\delta + \Pi)\ast G + \Pi\ast\Pi\ast G,
\end{align*}
where, in the second equality, we have used~\eqref{eq:Lace-Ours} to
rewrite the term in parentheses, and the subscripts $z$ have been
omitted. Rewriting the last factor of $G$ using~\eqref{eq:Lace-Ours}
yields
\begin{equation*}
  G = \delta + \Pi + \Pi^{\ast 2} + zD\ast (\delta + \Pi  + \Pi^{\ast 2})\ast G +
  \Pi^{\ast 3}\ast G,
\end{equation*}
where $A^{\ast k}$ is the $k$-fold autoconvolution of $A$.  Iterating
this yields~\eqref{eq:LSD} with
\begin{equation}
  \label{eq:newpi}
  \HHSpi_{z} = \sum_{k\geq 1}\Pi^{\ast k},
\end{equation}
since $\lim_{n\to\infty}\Pi^{\ast n}=0$ under the assumption that~$\Pi$
satisfies \Cref{hyp:Pi-Bound}.  Finally, \cite[Proposition~1.7]{HvdHS}
implies that, if $\Pi_{z}$ satisfies \Cref{hyp:Pi-Bound}, then
$\HHSpi_{z}$ defined by~\eqref{eq:newpi} satisfies
\Cref{hyp:Pi-Bound}, for possibly different constants. The change in
constants depends only on $d$. See~\cite[Section~4.1]{HvdHS} for a
further discussion of this point. Thus to apply
\Cref{thm:APP-GA} to the convolution
equation~\eqref{eq:Lace-Ours}, it suffices to verify the hypotheses of
\Cref{sec:HP} for $G_{z}$, $D$, and $\Pi$.

\end{document}